\long\def\remove#1{}
\DeclareMathOperator{\inv}{\sf Inv}
\DeclareMathOperator{\cl}{\sf cl}
\DeclareMathOperator{\mo}{\sf mo}
\DeclareMathOperator{\pf}{\sf pf}
\DeclareMathOperator{\ds}{\sf ds}
\DeclareMathOperator{\im}{\sf im}
\DeclareMathOperator{\spint}{\overline{\cap}}
\newcommand{\cancel}[1]{}
\newcommand{\mv}[1]{\mathcal{V}_{#1}}
\newcommand{\mvint}[2]{\mv{#1} \spint \mv{#2}}
\newcommand{\md}[1]{ \mathcal{M}_{#1} }
\definecolor{colorblind-yellow}{RGB}{221,170,51}
\definecolor{colorblind-red}{RGB}{187,85,102}
\definecolor{colorblind-blue}{RGB}{0,68,136}
\definecolor{dark-gray}{RGB}{64,64,64}
\definecolor{medium-gray}{RGB}{114,114,114}
\definecolor{light-gray}{RGB}{190,190,190}
\newtheorem{theorem}{\sffamily Theorem}
\newtheorem{proposition}[theorem]{\sffamily Proposition}
\newtheorem{definition}[theorem]{\sffamily Definition}
\newtheorem{remark}[theorem]{\sffamily Remark}
\newif\ifpaper
\title{Persistence of Conley-Morse Graphs in Combinatorial Dynamical Systems} 
\begin{document}

\author[1]{Tamal K. Dey\thanks{tamaldey@purdue.edu} }

\author[2]{Marian Mrozek\thanks{marian.mrozek@uj.edu.pl}}

\author[1]{Ryan Slechta\thanks{rslechta@purdue.edu}}

\affil[1]{Department of Computer Science,
Purdue University, West Lafayette, USA}
\affil[2]{Division of Computational Mathematics, Faculty of Mathematics and Computer Science, Jagiellonian University, Krak\'{o}w, Poland}
\date{}

\maketitle

\setcounter{page}{1}
\begin{abstract}
Multivector fields provide an avenue for studying continuous dynamical systems in a combinatorial framework.  There are currently two approaches in the literature which use \emph{persistent homology} to capture changes in combinatorial dynamical systems. The first captures changes in the Conley index, while the second captures changes in the Morse decomposition. However, such approaches have limitations. The former approach only describes how the Conley index changes across a selected isolated invariant set though the dynamics can be much more complicated than the behavior of a single isolated invariant set. Likewise, considering a Morse decomposition omits much information about the individual Morse sets. In this paper, we propose a method to summarize changes in combinatorial dynamical systems by capturing changes in the so-called \emph{Conley-Morse graphs}. A Conley-Morse graph contains information about both the structure of a selected Morse decomposition and about the Conley index at each Morse set in the decomposition. Hence, our method summarizes the changing structure of a sequence of dynamical systems at a finer granularity than previous approaches.
\end{abstract}

\section{Introduction}
\label{sec:intro}

The theory of dynamical systems emerged from the need to predict the asymptotic behavior of solutions of differential equations. The field of topological dynamics (and the Conley theory \cite{Co78} in particular) has developed tools for analyzing the structure of \emph{invariant sets}, or the sets to which solutions limit. The Conley theory provides powerful tools for describing invariant sets, including the Conley index, Morse decompositions, Conley-Morse graphs, and connection matrices \cite{AKKMOP2009,BGHKMOP2012}. These tools have found wide applicability \cite{MIK19}, and they are particularly visible in the proof of the existence of chaos in the Lorenz system \cite{MiMr1995}. 

Despite its achievements, the theory of dynamical systems now requires new ideas to meet the needs of a scientific community that is increasingly dependent on data. In biology and the social sciences, the availability of huge amounts of data is in contrast with missing or poor classical mathematical models. Recently, combinatorial models have gained attention as a potential replacement for classical smooth models. The advantage of combinatorial models is that they facilitate direct algorithmic analysis, which makes them an ideal tool in the context of data. Forman's seminal work on combinatorial Morse theory \cite{Forman1998b,Forman1998a} introduced combinatorial models to dynamics via \emph{combinatorial vector fields}. Later, these were generalized to \emph{multivector fields} \cite{LKMW19,Mr2017}. In recent years, powerful constructions from topological dynamics, including Morse decompositions and the Conley index, have been adapted to this new combinatorial setting \cite{BKMW2020,KMW2016,LKMW19,Mr2017}.

In this combinatorial framework, dynamical systems can be analyzed via \emph{persistent homology}, or briefly, \emph{persistence}. Persistent homology is a popular data analysis tool that captures the changing homology of a sequence of spaces. Among the first attempts to use persistence in dynamics is the study of the changes of eigenspaces of self-maps \cite{EJM2015}. Recently, the authors in \cite{BEJM2020} used persistence to quantify recurrent behavior in a self-map. In \cite{DJKKLM19}, the authors use zigzag persistence \cite{zigzag} to capture the changing structure of \emph{Morse sets} in a Morse decomposition. Persistence is also used in \cite{DMS2020} to capture changes in the Conley index.

Unfortunately, the information given by only considering Morse sets or only considering the Conley index is incomplete. Given an isolating set, there may be multiple Morse sets, each of which is associated with a Conley index. This information is captured in a \emph{Conley-Morse graph}, which includes a vertex for each Morse set for a given Morse decomposition and the Conley index of the given Morse set. The Conley-Morse graph is a much more precise summary of the behavior of a combinatorial dynamical system than the Morse sets or Conley index alone. In this paper, we use persistence to capture the changing Conley-Morse graph of a sequence of multivector fields with a specified Morse decomposition. Our method tracks changes in combinatorial dynamical systems at a much finer level of detail than the methods in \cite{DJKKLM19,DMS2020}. Figure \ref{fig:changing-dynamical-system} is an example of a changing sequence of multivector fields, and Figure \ref{fig:conley-morse-graphs} shows the changing Conley-Morse graphs that correspond to these dynamical systems. Figure \ref{fig:old-approach} depicts how the approach in \cite{DMS2020} captures the changing behavior of combinatorial dynamical systems. In contrast, Figure \ref{fig:new-approach} depicts part of the approach in this paper, which summarizes the changing Conley index across a select set of sequences of Morse sets, shown in Figure \ref{fig:extracted-filtrations}. By inspecting the barcodes in Figure \ref{fig:old-approach} and \ref{fig:new-approach}, it is easy to see that this paper presents a much more detailed representation of the changing behavior of combinatorial dynamical systems than the approach in \cite{DMS2020}. We also briefly present a technique for obtaining noise-resilient index pairs in Section \ref{sec:multinode}. 

\begin{figure}[htbp]
\centering
\begin{tabular}{ccc}
  \includegraphics[height=34mm]{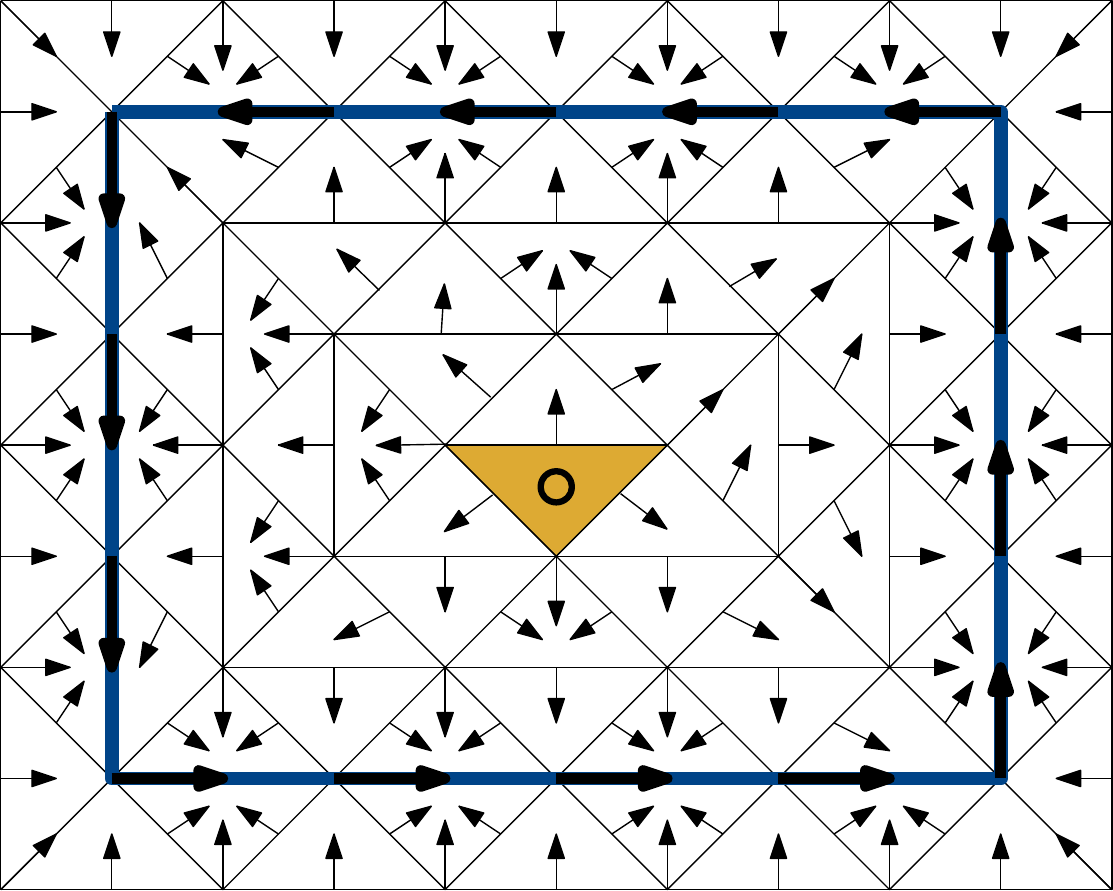}&
  \includegraphics[height=34mm]{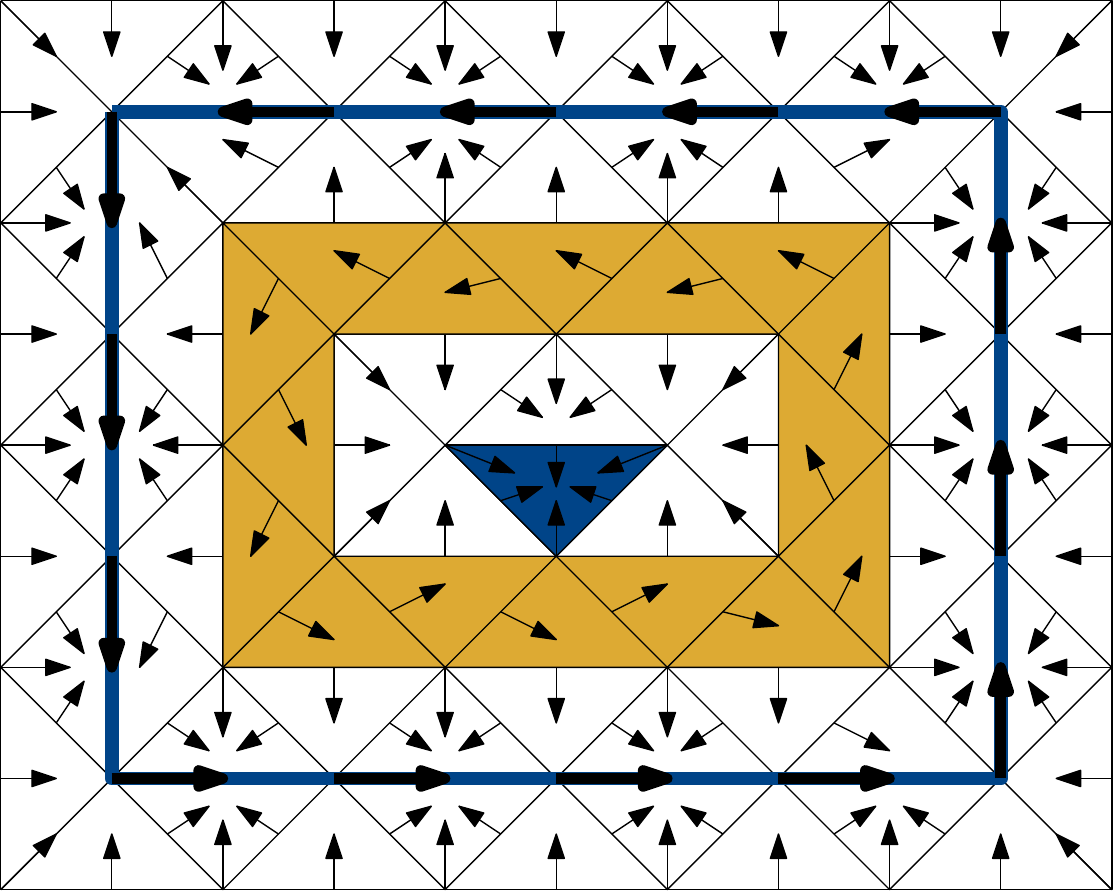}&
  \includegraphics[height=34mm]{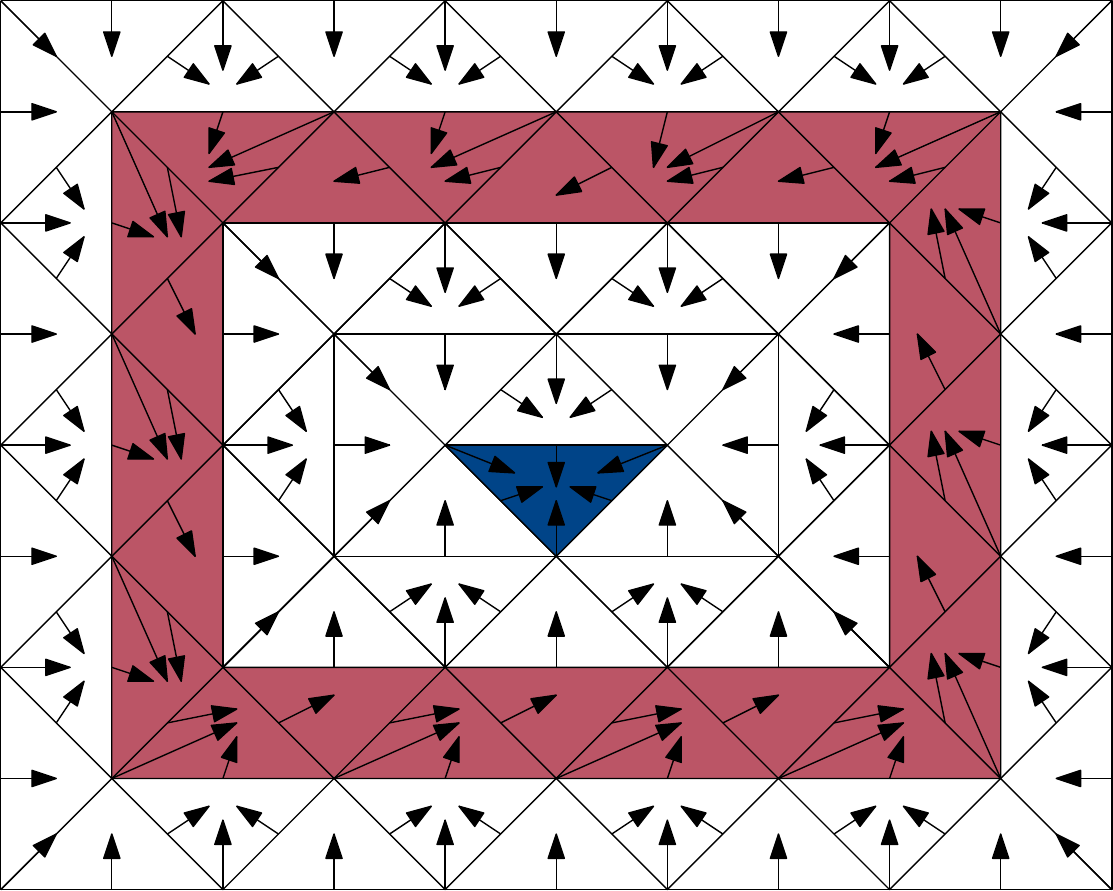}
\end{tabular}
\caption{Three multivector fields. On the left, a yellow repelling fixed point is surrounded by a blue periodic attractor. In the middle, the repelling fixed point has split into a yellow periodic repeller and a blue attracting fixed point. On the right, the periodic repeller has collided with the periodic attractor to form a red semistable limit cycle. In this example, all three multivector fields are significantly different from one another. In computing persistence, we will generally assume that there are several intermediate multivector fields, representing a gradual transition between the multivector fields shown here.}
\label{fig:changing-dynamical-system}
\end{figure} 

\begin{figure}[htbp]
\centering
\begin{tabular}{ccc}
  \includegraphics[height=34mm]{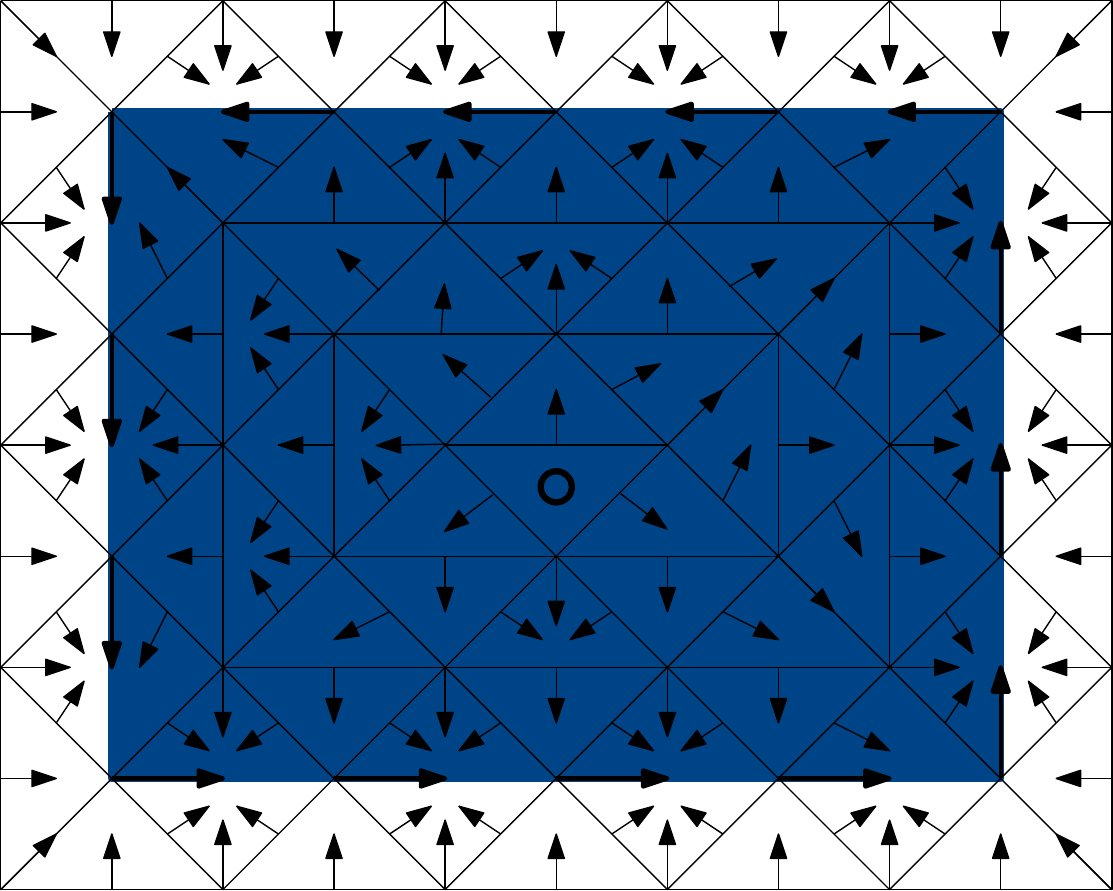}&
  \includegraphics[height=34mm]{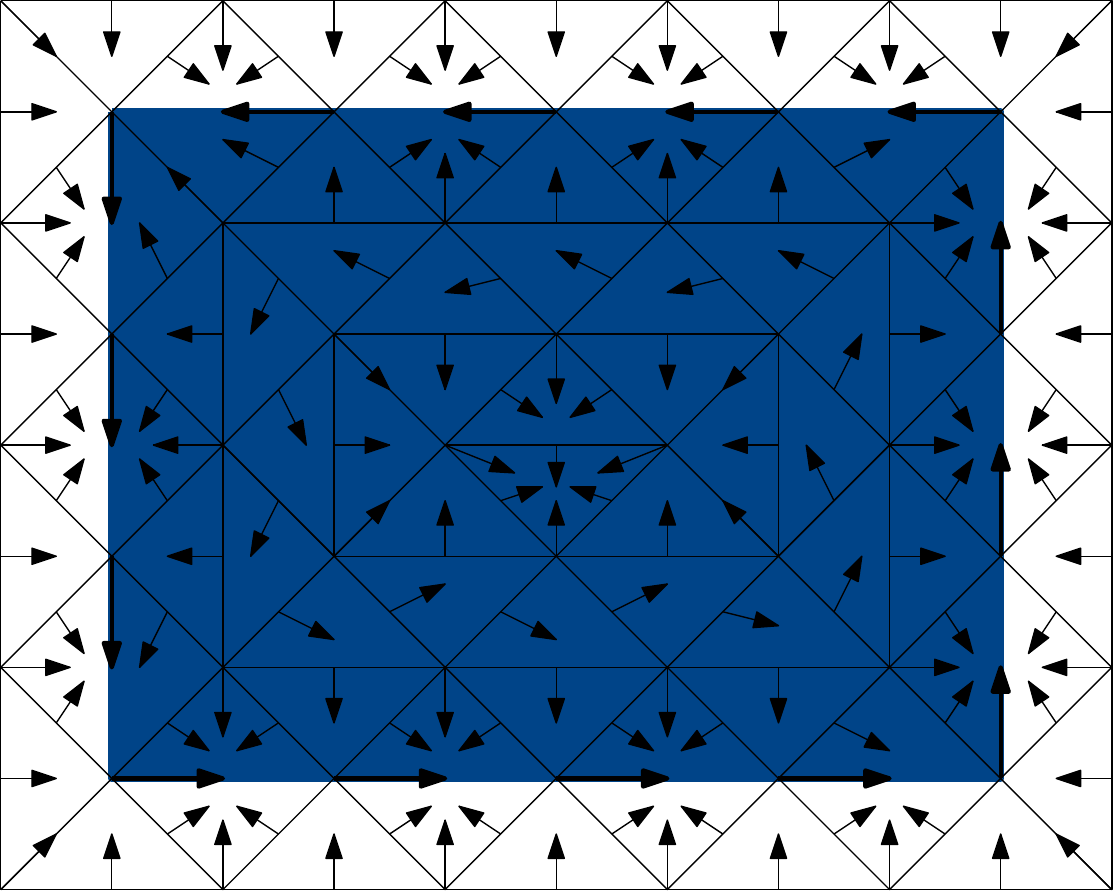}&
  \includegraphics[height=34mm]{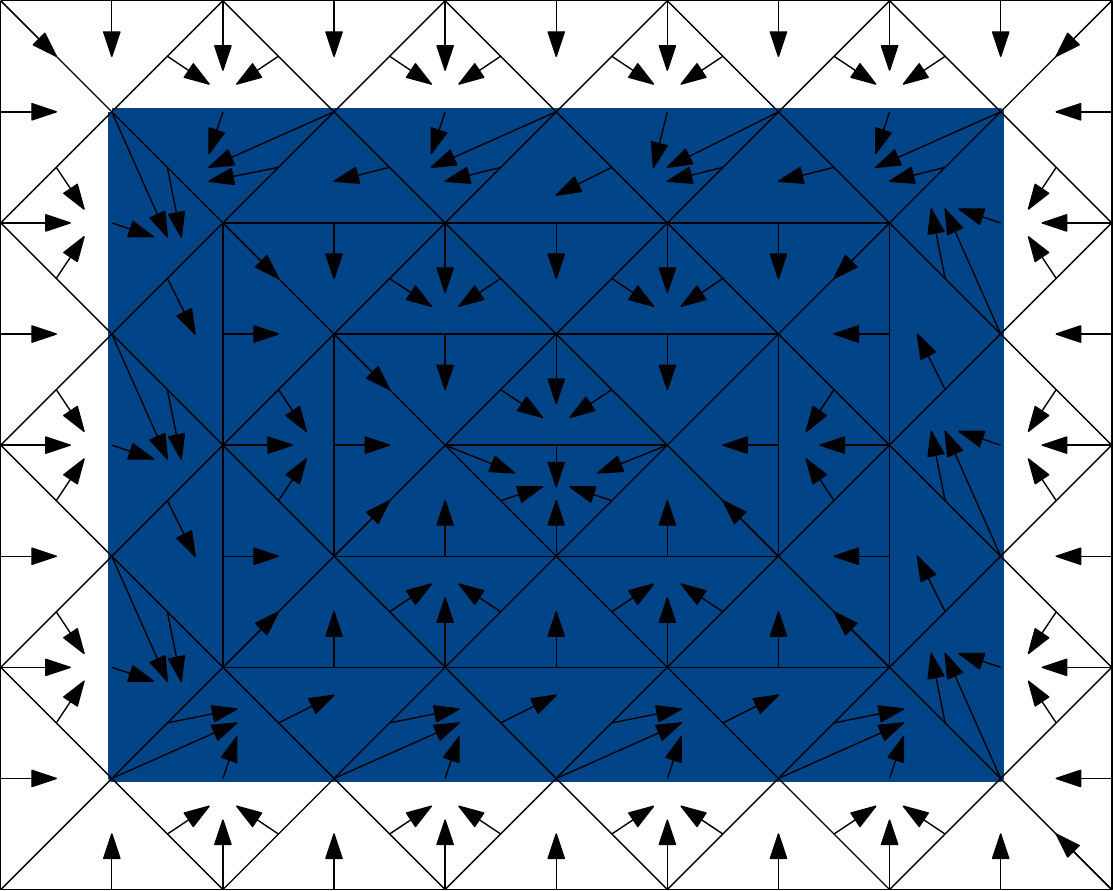}\\
  \multicolumn{3} {l} {
  \begin{tikzpicture}[outer sep = 0, inner sep = 0]
  \draw[fill=white,outer sep=0, inner sep=0] (0.0,-0.3) -- (13.56,-0.3) -- (13.56,0.1) -- (0.0,0.1) -- cycle;
  \end{tikzpicture} }
\end{tabular}
\caption{This figure illustrates the approach from \cite{DMS2020}, applied to capturing the changing structure of the multivector fields in Figure \ref{fig:changing-dynamical-system}. The approach in \cite{DMS2020} requires selecting a single isolated invariant set for each multivector field, with the canonical choice being the maximal isolated invariant set. In each multivector field, the maximal isolated invariant set is an attractor, highlighted in blue. Despite each multivector field giving rise to different dynamical systems, the maximal isolated invariant set is the same in each multivector field. Hence, computing the persistence using techniques from \cite{DMS2020} gives a single, $0$-dimensional bar, which we depict at the bottom in white.}
\label{fig:old-approach}
\end{figure} 

\begin{figure}[htpb]
    \centering
    \includegraphics[width=70mm]{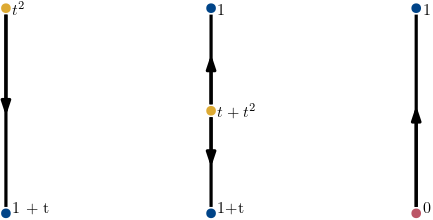}
    \caption{The Conley-Morse graph for the Morse decompositions (Definition~\ref{def:morse-decomp}) in Figure \ref{fig:changing-dynamical-system}, where the top vertices represent the fixed points. The colors of the vertices match the colors of the Morse sets with which they are drawn in Figure~\ref{fig:changing-dynamical-system}. Each label captures information about the Conley indices of the Morse sets.}
    \label{fig:conley-morse-graphs}
\end{figure}

\begin{figure}[htpb]
    \centering
    \includegraphics[width=92mm]{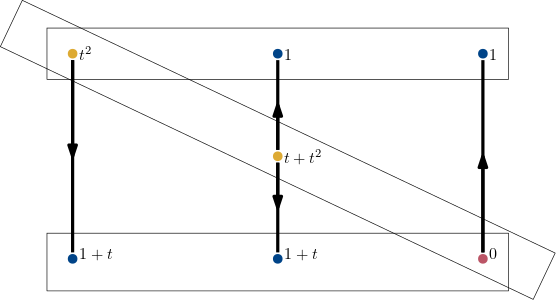}
    \caption{We extract a set of zigzag filtrations to capture the changing Conley indices in a sequence of Conley-Morse graphs. In this case, we extract three particular zigzag filtrations, which correspond to the sequences of Morse sets boxed in rectangles.}
    \label{fig:extracted-filtrations}
\end{figure}

\begin{figure}[htbp]
\centering
\begin{tabular}{ccc}
  \includegraphics[height=34mm]{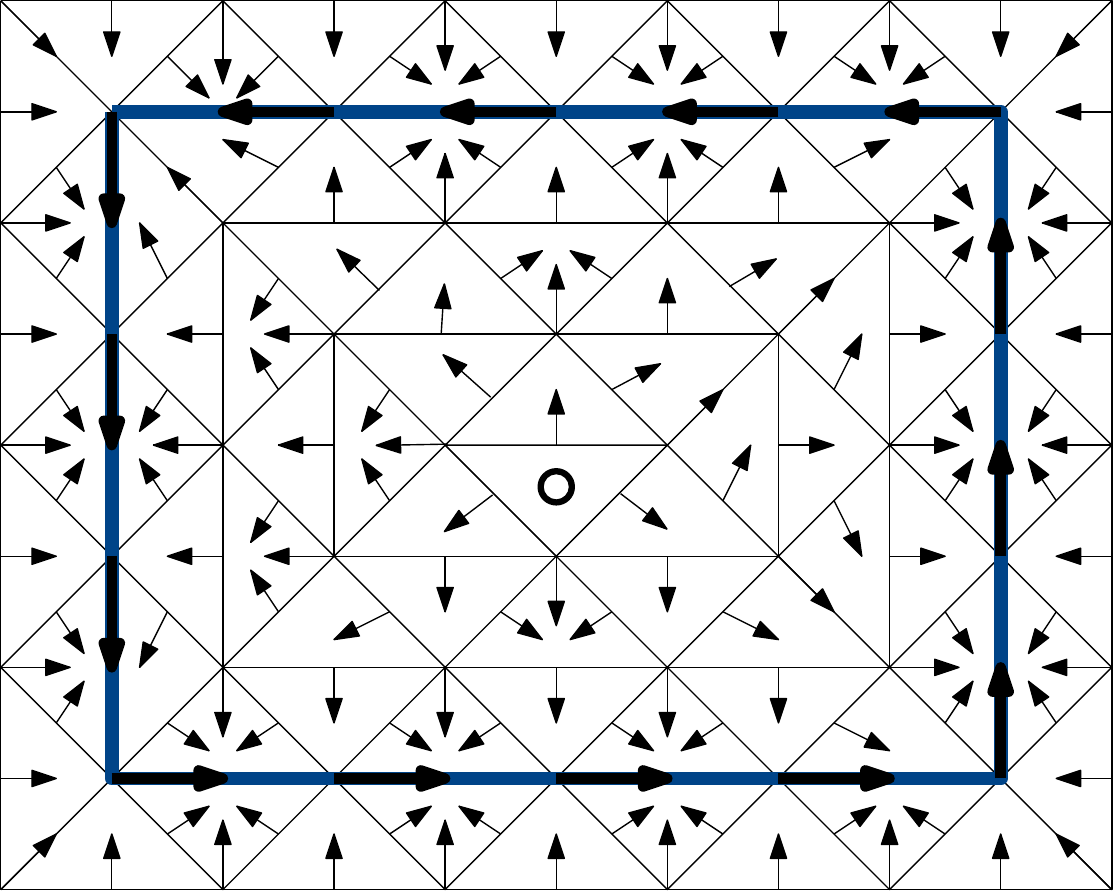}&
  \includegraphics[height=34mm]{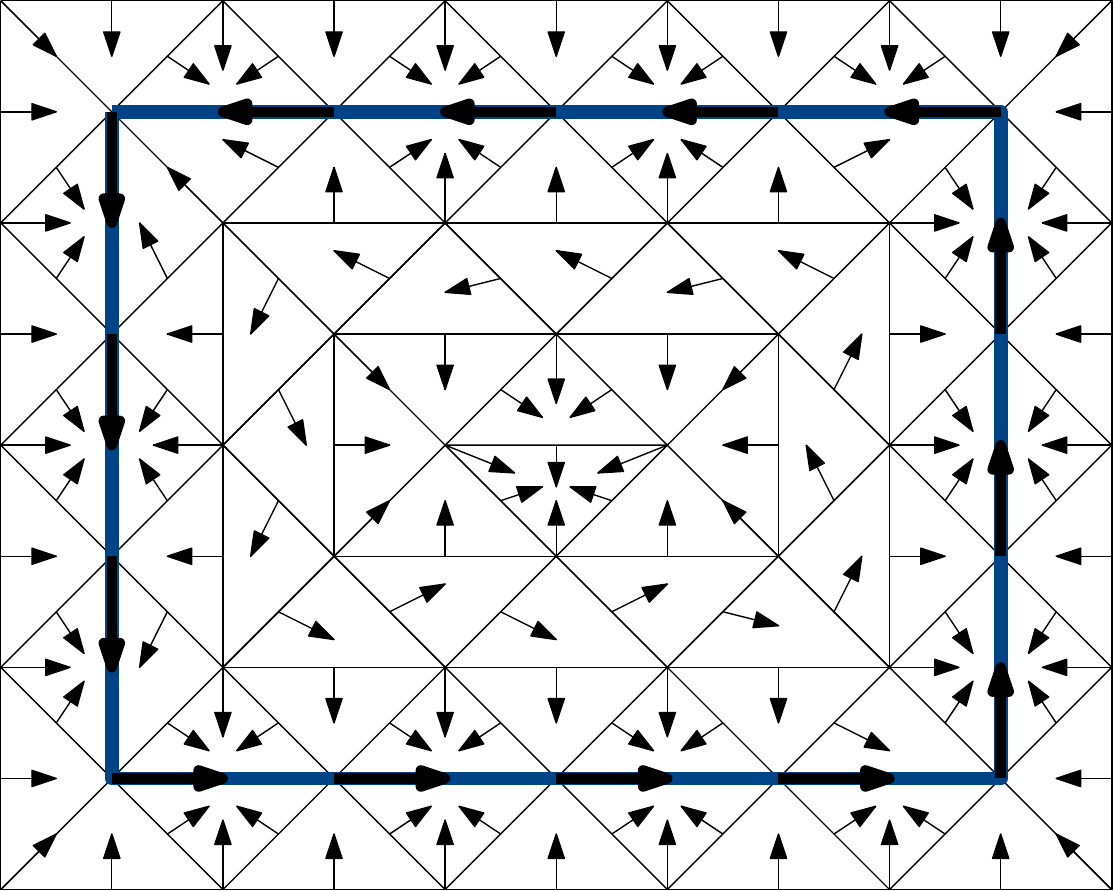}&
  \includegraphics[height=34mm]{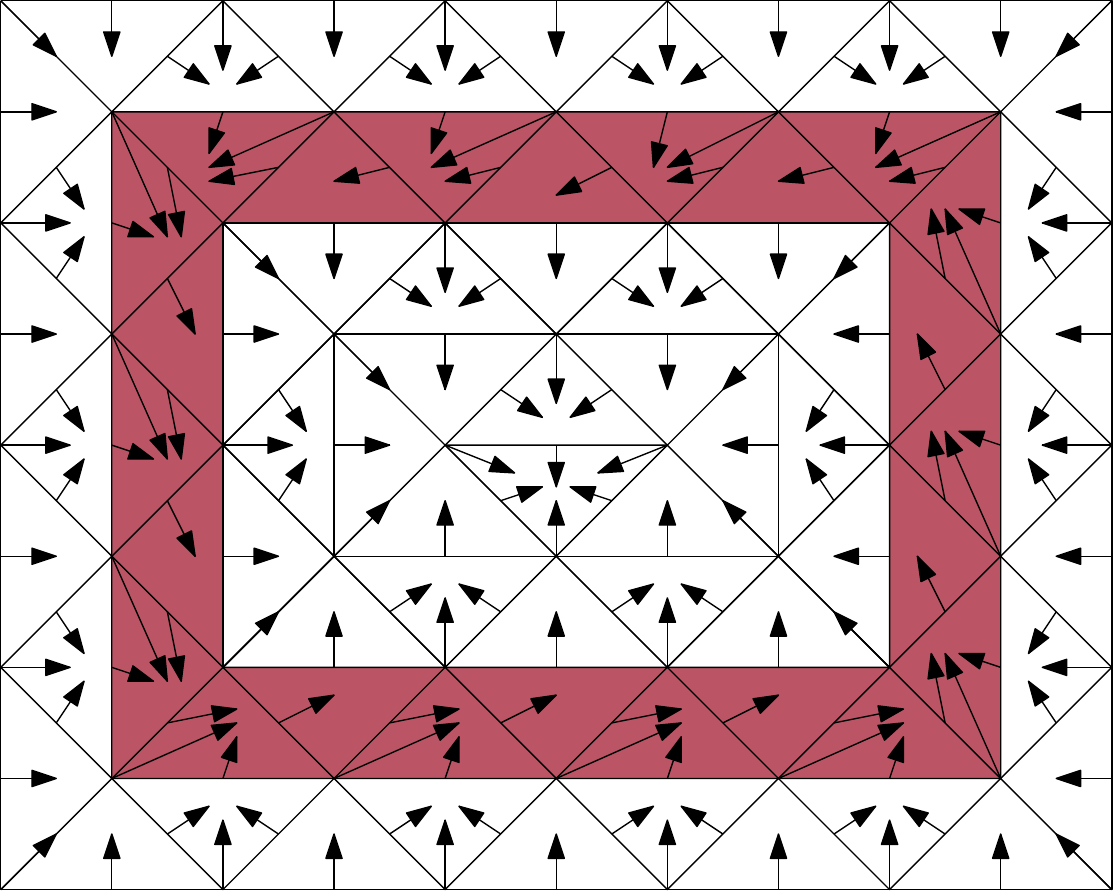}\\
  \multicolumn{3} {l} {
  \begin{tikzpicture}[outer sep = 0, inner sep = 0]
  \draw[fill=white,outer sep=0, inner sep=0] (0.0,-0.3) -- (9.34,-0.3) -- (9.34,0.1) -- (0.0,0.1) -- cycle;
  \node[align=left] at (1.3,-0.1) {Dimension: 0};
  \end{tikzpicture} }\\
  \multicolumn{3} {l} {
  \begin{tikzpicture}[outer sep = 0, inner sep = 0]
  \draw[fill=light-gray,outer sep=0, inner sep=0] (0.0,-0.3) -- (9.34,-0.3) -- (9.34,0.1) -- (0.0,0.1) -- cycle;
  \node[align=left] at (1.3,-0.1) {Dimension: 1};
  \end{tikzpicture} }\\
  \includegraphics[height=34mm]{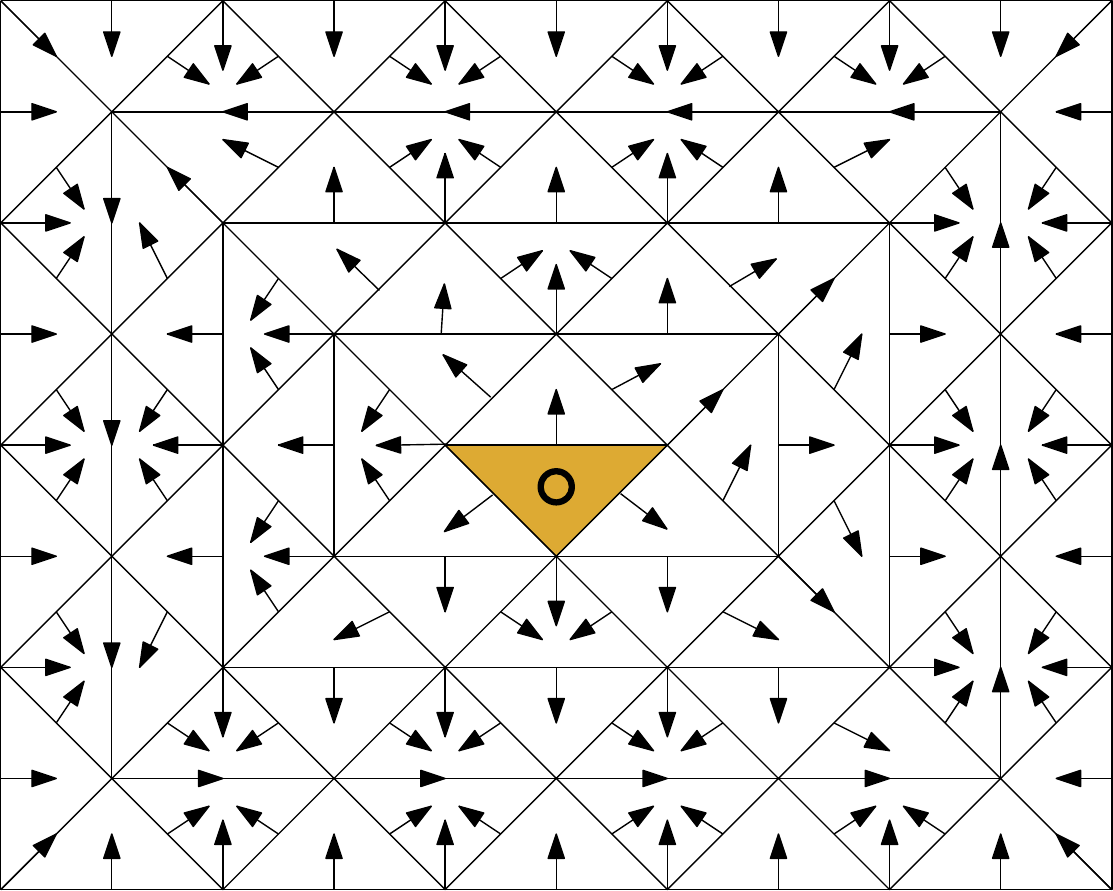}&
  \includegraphics[height=34mm]{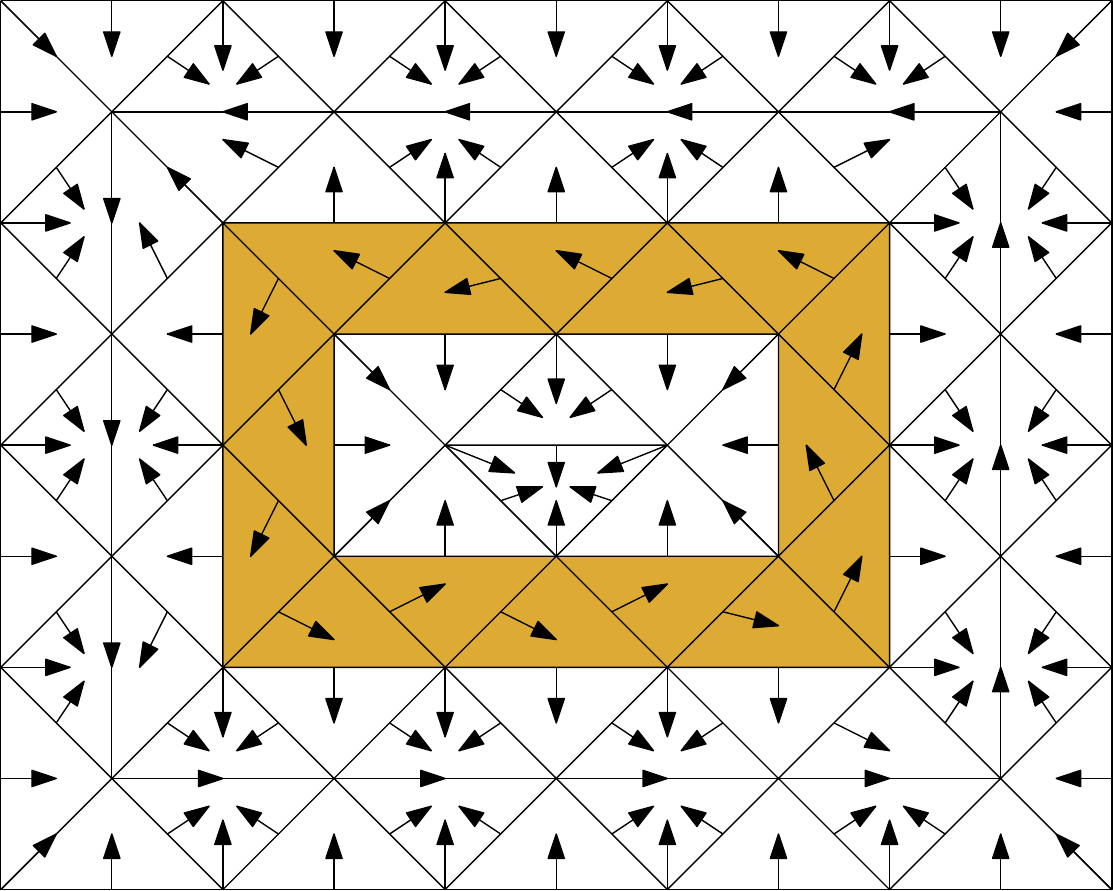}&
  \includegraphics[height=34mm]{fig/double-bifurcation-semistable-semistable.pdf}\\
  \multicolumn{3} {l} {
  \begin{tikzpicture}[outer sep = 0, inner sep = 0]
  \draw[fill=dark-gray,outer sep=0, inner sep=0] (0.0,-0.3) -- (9.34,-0.3) -- (9.34,0.1) -- (0.0,0.1) -- cycle;
  \node[align=left,color=white] at (1.3,-0.1) {Dimension: 2};
  \end{tikzpicture} }\\
  \multicolumn{3} {c} {
  \begin{tikzpicture}[outer sep = 0, inner sep = 0]
  \draw[fill=light-gray,outer sep=0, inner sep=0] (0.0,-0.3) -- (5.12,-0.3) -- (5.12,0.1) -- (0.0,0.1) -- cycle;
  \node[align=left] at (1.3,-0.1) {Dimension: 1};
  \end{tikzpicture} }\\
  \includegraphics[height=34mm]{fig/double-bifurcation-base-repeller.pdf}&
  \includegraphics[height=34mm]{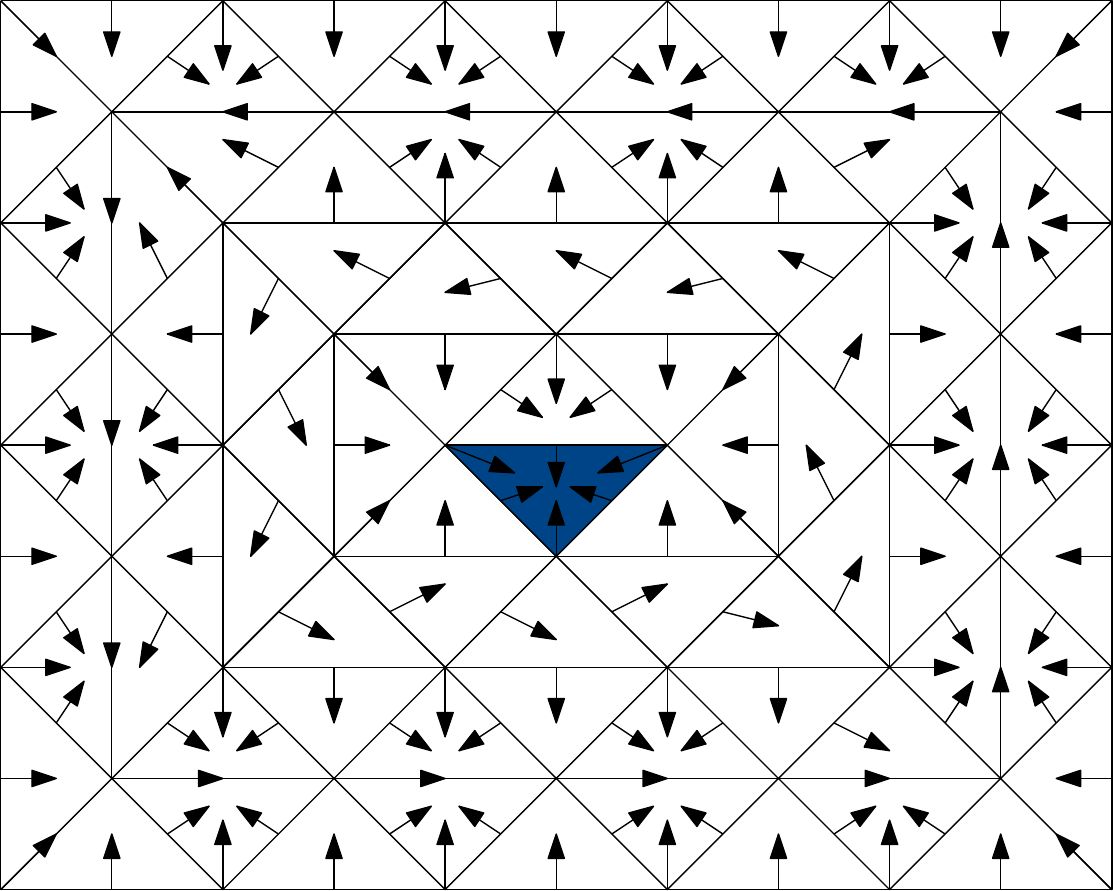}&
  \includegraphics[height=34mm]{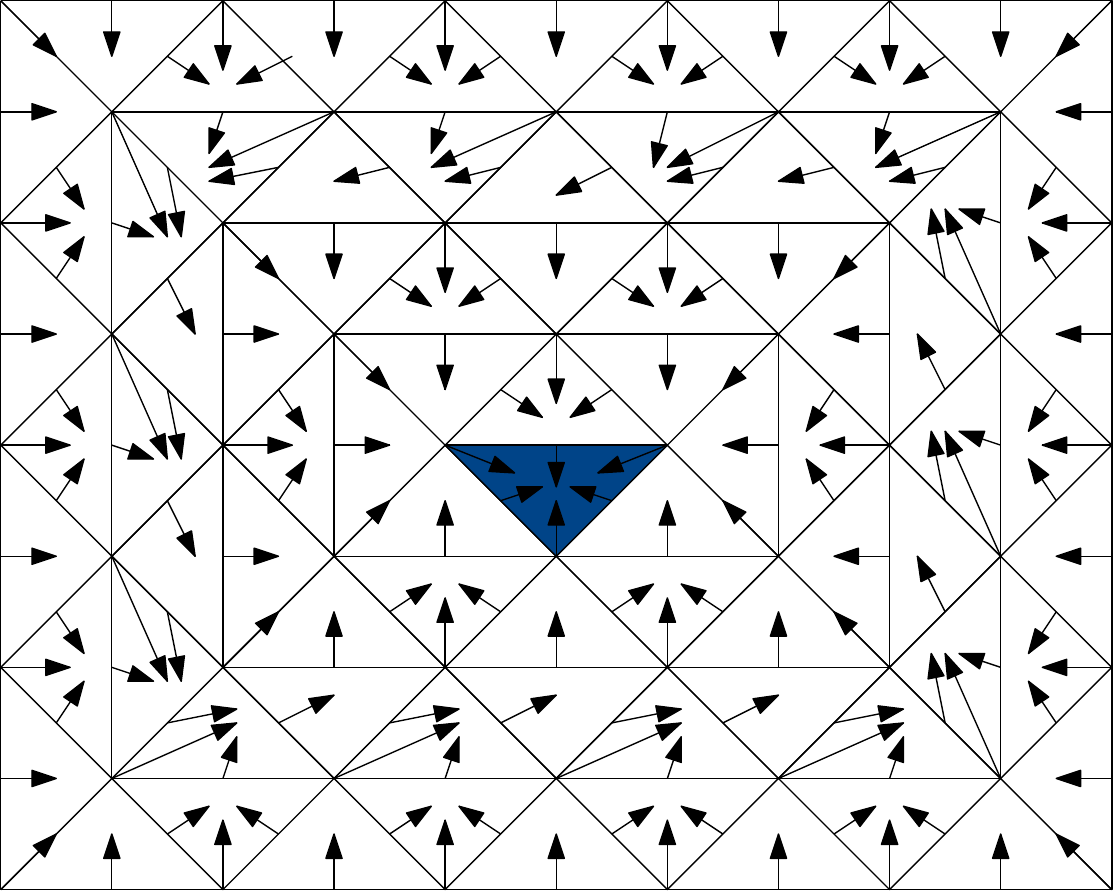}\\
  \multicolumn{3} {l} {
  \begin{tikzpicture}[outer sep = 0, inner sep = 0]
  \draw[fill=dark-gray,outer sep=0, inner sep=0] (0,-0.3) -- (4.22,-0.3) -- (4.22,0.1) -- (0,0.1) -- cycle;
  \node[align=left,color=white] at (1.3,-0.1) {Dimension: 2};
  \end{tikzpicture} }\\
  \multicolumn{3} {r} {
  \begin{tikzpicture}[outer sep = 0, inner sep = 0]
  \draw[fill=white,outer sep=0, inner sep=0] (4.22,-0.3) -- (13.56,-0.3) -- (13.56,0.1) -- (4.22,0.1) -- cycle;
  \node[align=left] at (5.5,-0.1) {Dimension: 0};
  \end{tikzpicture} }
\end{tabular}
\caption{Illustrating our approach for computing the changing structure of the multivector fields in Figure \ref{fig:changing-dynamical-system}. By extracting a specific set of zigzag filtrations (Section~\ref{sec:persist}), we can compute a set of barcodes which represent the changing Conley indices of the Morse sets (Section~\ref{sec:barcode}). Each row represents an extracted filtration, and the barcode for the filtration is depicted below it. White bars are $0$-dimensional, light gray bars are $1$-dimensional, and dark gray bars are $2$-dimensional. In addition, we extract barcodes representing the changing structure of the Conley-Morse graph (not pictured). We collate all of these bars into a single barcode by removing redundancy in Section \ref{sec:barcode}. }
\label{fig:new-approach}
\end{figure} 
\section{Preliminaries}
\label{sec:prelim}

In this section, we review some background on combinatorial dynamical systems. We assume that the reader is familiar with simplicial homology and persistent homology. Standard references for simplicial homology are \cite{hatcher,munkres}. For background on persistence, see \cite{EH2010}. 

\subsection{Multivector Fields}

Throughout this paper, we restrict our attention to simplicial complexes of arbitrary dimension. For a simplicial complex $K$, we use $\leq$ to denote the face relation, that is, $\sigma \leq \tau$ if $\sigma$ is a face of $\tau$. We define the \emph{closure} $\cl(\sigma)$ of $\sigma$ as $\cl( \sigma ):= \{ \tau \; | \tau \leq \sigma \}$ and extend the notion to a set of simplices $A \subseteq K$ as $\cl(A) := \cup_{\sigma \in A} \cl( \sigma )$. The set $A$ is \emph{closed} if $A = \cl(A)$. 

The face relation $\leq$ also induces a notion of convexity. A set $A \subseteq K$ is \emph{convex} if and only if for each $\sigma_1, \sigma_2 \in A$, there does not exist a $\sigma \in K \setminus \{\sigma_1, \sigma_2\}$ such that $\sigma_1 \leq \sigma \leq \sigma_2$. We say that a convex subset of $K$ is a \emph{multivector}. However, we are primarily interested in sets of multivectors called \emph{multivector fields} and relationships between them. 
\begin{definition}[Multivector Field, Refinement, Intersection Field]
A multivector field $\mv{}$ is a partition of a simplicial complex $K$ into multivectors. 
If $\mv{1}$ and $\mv{2}$ are multivector fields on $K$ such that for all $V \in \mv{1}$, there exists a $V' \in \mv{2}$ where $V \subseteq V'$, then $\mv{1}$ is a \emph{refinement} of $\mv{2}$. This relationship is denoted $\mv{1} \sqsubseteq \mv{2}$. Any two multivector fields $\mv{1}$, $\mv{2}$ over a simplicial complex $K$ give an \emph{intersection field} $\mvint{1}{2} := \{ V_1 \cap V_2 \; | \; V_1 \in \mv{1}, V_2 \in \mv{2} \}$.
\label{def:multivector}
\end{definition}
It is easy to check that $\mvint{1}{2}$ is always a multivector field, and $\mvint{1}{2}\sqsubseteq \mv{1}, \mv{2}$. We include an example of three multivector fields in Figure \ref{fig:changing-dynamical-system}. An intersection field is shown in Figure \ref{fig:spurious-pers}. A simplex $\sigma \in V$ is \emph{maximal in} $V$ if there does not exist a $\sigma' \in V$ where $\sigma \leq \sigma'$. We say that $\sigma$ is \emph{nonmaximal in} $V$ if there does not exist such a $\sigma' \in V$. In all figures, we draw a multivector $V$ by drawing an arrow from each nonmaximal $\tau$ in $V$ to each maximal $\sigma$ in $V$ where $\tau \leq \sigma$. The astute reader will notice that in general, multivectors need not be connected. All of the results in this paper hold for disconnected multivectors. However, in the interest of legibility, all of our examples will depict multivector fields with connected multivectors.  

In \cite{Mr2017}, a notion of dynamics on multivector fields is introduced. These dynamics takes the form of a multivalued map $F_{\mv{}} \; : \; K \multimap K$. Following~\cite{DJKKLM19}, we have $F_{\mv{}}(\sigma) = [ \sigma ]_{\mv{}} \cup \cl( \sigma )$, where $[ \sigma ]_{\mv{}}$ denotes the unique multivector in $\mv{}$ that contains $\sigma$. Each simplex $\sigma$ is a fixed point under $F_{\mv{}}$, that is, $\sigma \in F_{\mv{}}( \sigma )$. A finite sequence of simplices $\sigma_1, \sigma_2, \ldots, \sigma_n$ is a \emph{path} if for $i = 2, \ldots, n$, we have $\sigma_i \in F_{\mv{}}( \sigma_{i-1} )$. A bi-infinite sequence $\ldots, \sigma_{-1}, \sigma_0, \sigma_1, \ldots$ is a \emph{solution} if for all $i$, $\sigma_{i + 1} \in F_{ \mv{} }( \sigma_i )$. We often write paths as functions $\rho \; : \; \mathbb{Z} \cap [a,b] \to K$ and solutions as functions $\rho \; : \; \mathbb{Z} \to K$. As a solution $\rho$ exists relative to a multivector field $\mv{}$, we say that $\rho$ is a solution under $\mv{}$. An undesirable consequence of the definition of a solution is that every $\sigma \in K$ gives a solution $\rho$ where $\rho( \mathbb{Z} ) = \sigma$. This does not reflect intuition from differential equations. To correct for  this, the authors in \cite{LKMW19} introduced a notion of an \emph{essential solution}, which requires the notion of a \emph{critical} multivector. For convenience, we denote the \emph{mouth} of a multivector $V$ as $\mo(V) := \cl(V) \setminus V$. 
\begin{definition}[Critical and Regular Multivectors]
A multivector $V$ is \emph{critical} if there exists an integer $k \geq 0$ such that the relative homology group $H_k( \cl(V), \mo(V) )$ is nontrivial. If $V$ is not critical, then it is \emph{regular}.
\end{definition}
All homology groups in this paper are simplicial with coefficients taken from a finite field.
For a multivector $V$, both $\mo(V)$ and $\cl(V)$ are complexes, so $H_k( \cl(V), \mo(V))$ is well defined. 

Informally, an essential solution is allowed to stay in a critical multivector for infinite
time, but it must enter and exit each regular multivector that it visits.
\begin{definition}[Essential Solution]
A solution $\rho \; : \; \mathbb{Z} \to K$ under $\mv{}$ is an \emph{essential solution} if for each $i$ where $[ \rho( i )  ]_{\mv{}}$ is regular, there exists a pair $i^+, i^- \in \mathbb{Z}$, satisfying $i^- < i < i^+$, such that $[ \rho(i^-) ]_{ \mv{} } \neq [ \rho(i) ]_{ \mv{} }$ and $[ \rho(i^+) ]_{ \mv{} } \neq [ \rho(i) ]_{ \mv{} }$.
\end{definition}
We use essential solutions to define the invariant part of a set.
\begin{definition}[Invariant Part]
Let $A \subseteq K$ and let $\mv{}$ be a multivector field on $K$. The \emph{invariant part} of $A$, denoted $\inv_{\mv{}}(A)$, is the set of simplices $\sigma \in A$ such that there exists an essential solution $\rho \; : \; \mathbb{Z} \to K$ where $\rho(i) = \sigma$ for some $i\in \mathbb{Z}$ and $\rho( \mathbb{Z} ) \subseteq A$. 
\end{definition}
If the field $\mv{}$ is clear from context, we use the notation $\inv(A)$. A set $S \subseteq K$ is an \emph{invariant set} if $\inv_{\mv{}}( S ) = S$. In particular, $S$ is an invariant set \emph{under} $\mv{}$. Sometimes, we call $\inv(A)$ the \emph{maximal invariant set in} $A$. An invariant set $S$ is $\mv{}$-compatible if $S$ can be written as a union of multivectors in $\mv{}$. We are particularly interested in \emph{isolated} invariant sets.
\begin{definition}[Isolated Invariant Sets, Isolating Sets]
Let $S$ denote an invariant set under $\mv{}$, and let $N$ denote a closed set. If 
\begin{enumerate}
    \item $S$ is $\mv{}$-compatible, and 
    \item Each path $\rho \; : \; \mathbb{Z} \cap [a,b] \to N$ with $\rho(a), \rho(b) \in S$ has the property that $\im( \rho ) \subseteq S$,
\end{enumerate}
then $S$ is an \emph{isolated invariant set}. We also say that $S$ is \emph{isolated} by $N$, and $N$ is an \emph{isolating set}\footnote{It was termed `isolating neighborhood' in earlier papers such as~\cite{DJKKLM19,DMS2020,Mr2017}.} for $S$.
\end{definition}
We say that ``$S$ is an isolated invariant set in $N$'' to imply that $S$ is isolated by $N$. This paper heavily uses \emph{Morse decompositions} of isolated invariant sets. For an essential solution $\rho \; : \; \mathbb{Z} \to K$, we use the notation $\alpha(\rho) := \bigcap_{i=1}^\infty \rho(-\infty,-i]$ and $\omega(\rho) = \bigcap_{i=1}^\infty \rho[i,\infty)$. Because we take $K$ to be finite, $\alpha(\rho) \neq \emptyset$ and $\omega(\rho)\neq \emptyset$. Intuitively, $\alpha(\rho)$ and $\omega(\rho)$ capture the behavior at the beginning and end of an essential solution. 

\begin{definition}[Morse Decomposition]
Let $S$ denote an isolated invariant set in $N$ and $(\mathbb{P}, \leq)$ a finite poset. The collection $\md{} = \{M_p \; | \; p \in \mathbb{P}\}$ is called a \emph{Morse decomposition} of $S$ if both of the following conditions are satisfied: 
\begin{enumerate}
    \item $\md{}$ is a family of mutually disjoint, isolated invariant subsets of $S$, and
    \item For every essential solution $\rho \; : \; \mathbb{Z} \to S$ either $\im( \rho ) \subseteq M_r$ for an $r \in \mathbb{P}$ or there exist $p,q \in \mathbb{P}$ such that $q > p$, $\alpha(\rho) \subseteq M_q$, and $\omega( \rho ) \subseteq M_p$. 
\end{enumerate}

An element of a Morse decomposition is called a \emph{Morse set}. For Morse sets $M_p, M_q \in \md{}$, we often abuse notation and write $M_p \leq M_q$ if $p \leq q$. If the only Morse decomposition for an isolated invariant set $S$ is $\md{} = \{S\}$, then $S$ is \emph{minimal}.
\label{def:morse-decomp}
\end{definition}

In Figure \ref{fig:old-approach}, we have three isolated invariant sets. Figure \ref{fig:changing-dynamical-system} contains a Morse decomposition for each of the three. For example, one Morse decomposition for the blue rectangle in the center of Figure \ref{fig:old-approach} is given by the blue periodic attractor, the yellow periodic repeller, and the blue attracting fixed point that are depicted in the center of Figure \ref{fig:changing-dynamical-system}. 

In this paper, we will frequently use a particular type of Morse decomposition called a \emph{minimal} Morse decomposition. A Morse decomposition $\md{}$ is minimal if each $M \in \md{}$ is minimal. Fortunately, there is a simple characterization of minimal isolated invariant sets.

\begin{proposition}{\cite[Proposition 6.7]{LKMW19}}
Let $\mv{}$ denote a multivector field over $K$, and let $S$ denote an isolated invariant set under $\mv{}$. The set $S$ is minimal if and only if for all $\sigma,\tau \in S$, there exists a path $\rho \; : \; [0, n] \to S$ where $\rho( 0 ) = \sigma$ and $\rho( n ) = \tau$. 
\label{prop:min}
\end{proposition}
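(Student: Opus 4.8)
The plan is to prove Proposition~\ref{prop:min} by establishing the contrapositive in one direction and a direct construction in the other, working throughout with essential solutions and the $\alpha$/$\omega$-limit machinery recalled in the excerpt.

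\textbf{The easy direction (path-connectivity implies minimality).} Suppose every pair $\sigma, \tau \in S$ is joined by a path inside $S$. I would argue by contradiction: assume $\md{} = \{M_p \mid p \in \mathbb{P}\}$ is a Morse decomposition of $S$ with $|\mathbb{P}| \geq 2$, so there are Morse sets $M_p \neq M_q$. Pick $\sigma \in M_q$ and $\tau \in M_p$. By hypothesis there is a path $\sigma = \rho_0, \dots, \rho_n = \tau$ in $S$, and by path-connectivity again there is a path back from $\tau$ to $\sigma$; concatenating and iterating periodically yields a bi-infinite \emph{periodic} sequence that is a solution under $\mv{}$ and passes through both $\sigma$ and $\tau$. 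The obstacle here is that this periodic solution need not be \emph{essential} — a periodic orbit can sit inside a regular multivector. I would handle this by noting that a periodic solution through a simplex whose multivector is regular forces, via the definition of essential solution, that one can still extract an essential solution with the same image (or, more carefully, invoke that $S$ is an invariant set so $\sigma, \tau \in \inv_{\mv{}}(S)$, hence each already lies on \emph{some} essential solution contained in $S$, and splice the path between them). Once we have an essential solution $\rho$ with $\sigma, \tau \in \im(\rho)$ contained in $S$, condition~(2) of Definition~\ref{def:morse-decomp} applies: either $\im(\rho) \subseteq M_r$ for a single $r$, contradicting $\sigma \in M_q$ and $\tau \in M_p$ with $p \neq q$; or $\alpha(\rho) \subseteq M_{q'}$ and $\omega(\rho) \subseteq M_{p'}$ with $q' > p'$. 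In the periodic case $\alpha(\rho) = \omega(\rho) = \im(\rho)$, which cannot be contained in two incomparable (indeed, strictly ordered) Morse sets — contradiction. So $\md{}$ must be trivial, i.e.\ $S$ is minimal.

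\textbf{The hard direction (minimality implies path-connectivity).} Suppose $S$ is \emph{not} path-connected in the above sense: there exist $\sigma_0, \tau_0 \in S$ with no path from $\sigma_0$ to $\tau_0$ inside $S$. The goal is to build a nontrivial Morse decomposition, contradicting minimality. Define a relation on $S$ by $\sigma \sim \tau$ iff there is a path from $\sigma$ to $\tau$ and a path from $\tau$ to $\sigma$, both inside $S$; this is an equivalence relation, and its classes are the strongly connected components of the directed graph $(S, F_{\mv{}}|_S)$. The reachability relation descends to a partial order $\leq$ on the set $\mathbb{P}$ of components. I would then show: (a) each component $M_p$ is itself an isolated invariant set (invariance: every simplex in a nontrivial strongly connected component lies on a periodic path, which one upgrades to an essential solution staying in the component using the critical/regular dichotomy; isolation: use $N$ together with the fact that $S$ is isolated by $N$, and that a path between two simplices of $M_p$ through $N$ stays in $S$, then stays in $M_p$ by strong-connectivity); (b) the components are mutually disjoint by construction; and (c) condition~(2) holds because for any essential solution $\rho$ in $S$, the sequence of components it visits is non-increasing in $\leq$ (each step of $\rho$ is a move in $F_{\mv{}}$, hence cannot go "up"), so it stabilizes at the two ends, giving $\alpha(\rho) \subseteq M_q$, $\omega(\rho) \subseteq M_p$ with $q \geq p$; and $q = p$ forces $\im(\rho)$ into one component (here one uses that if the orbit returns to a component it never left its strongly connected closure). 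Since $\sigma_0 \not\sim \tau_0$ — or more precisely since there is no path $\sigma_0 \to \tau_0$ — there are at least two distinct components, so $\md{} = \{M_p\}$ is a nontrivial Morse decomposition, contradicting minimality.

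\textbf{Anticipated main obstacle.} The crux is the interplay between \emph{paths} (finite, in $F_{\mv{}}$) and \emph{essential solutions} (bi-infinite, with the enter/exit constraint on regular multivectors): the statement is phrased purely in terms of paths, but Morse decompositions are defined via essential solutions, so both directions hinge on a lemma saying that a periodic path through $S$ can be realized as (or forces the existence of) an essential solution with controlled $\alpha$- and $\omega$-limits. The cleanest route is probably to prove once and for all that for $\sigma, \tau$ in the same strongly connected component of $(S, F_{\mv{}}|_S)$ there is an essential solution $\rho$ with $\alpha(\rho) = \omega(\rho) = \im(\rho) \supseteq \{\sigma, \tau\}$ contained in that component — the regular-multivector obstruction is dispatched because a strongly connected component containing more than one multivector automatically has each of its regular multivectors entered and exited along the periodic path, and a singleton component $\{\sigma\}$ with $[\sigma]_{\mv{}}$ regular cannot actually be invariant, so it would not arise as $\inv_{\mv{}}$ of anything. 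Establishing this realization lemma carefully, and verifying isolation of each component against the fixed isolating set $N$, are the steps I expect to consume most of the work; the order-theoretic bookkeeping (monotonicity of the component sequence along an essential solution) is then routine.
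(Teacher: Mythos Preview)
The paper does not supply a proof of this proposition: it is quoted as \cite[Proposition 6.7]{LKMW19} and used as a black box, with no argument given in the present paper. There is therefore no in-paper proof against which to compare your proposal.

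For what it is worth, your outline follows the standard route one would expect (strongly connected components of the digraph $(S,F_{\mv{}}|_S)$ furnish the candidate Morse sets, and the reachability order gives the poset), and you have correctly flagged the genuine technical issue: the mismatch between \emph{paths} and \emph{essential solutions}. One point to watch in the hard direction is that not every strongly connected component of $(S,F_{\mv{}}|_S)$ is an isolated invariant set --- trivial components (a single simplex whose multivector is regular, or more generally a component supporting no essential solution) must be discarded before you obtain a Morse decomposition, and you then need to verify that condition~(2) of Definition~\ref{def:morse-decomp} still holds once those simplices are relegated to the connecting orbits. Your closing paragraph gestures at this, but the actual bookkeeping (that the excluded simplices are absorbed into the $\alpha/\omega$ behaviour of essential solutions and never violate the order) is where the substance of the argument lies.
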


\subsection{Conley Index}
The Conley index has been defined in different contexts~\cite{Co78,LKMW19,Mr2017}. Here we use the definition in \cite{LKMW19}, where it is defined using essential solutions. First, we define the \emph{index pair}. 
\begin{definition}[Index Pair]
Let $S$ be an isolated invariant set under $\mv{}$, and let $P$ and $E$ be closed sets with $E \subseteq P$. If all of the following hold, then $(P,E)$ is an \emph{index pair for} $S$:
\begin{enumerate}
\item $F_{\mv{}}( E ) \cap P \subseteq E$
\item $F_{\mv{}} ( P \setminus E ) \subseteq P$
\item $S = \inv_{\mv{}}( P \setminus E )$
\end{enumerate}
\label{def:ip}
\end{definition}
We can use index pairs to define the Conley Index. 
\begin{definition}[Conley Index]
Let $(P,E)$ denote an index pair for an isolated invariant set $S$. The \emph{$k$-dimensional Conley Index} is given by the relative homology group $H_k(P, E)$. 
\end{definition}
\begin{figure}[htbp]
\centering
\begin{tabular}{cc}
  \includegraphics[height=50mm]{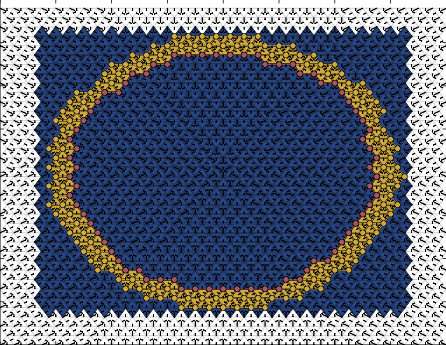}&
  \includegraphics[height=50mm]{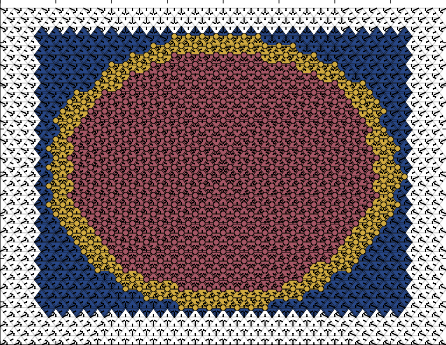}
\end{tabular}
\caption{ In both figures, we depict an isolated invariant set $S$ in yellow, and an isolating set $N$ for $S$ given by all colored simplices. On the left, we depict an index pair for $S$, where $P$ is given by the red and yellow simplices, and $E$ is given by the red simplices. On the right, we have an index pair for $S$ in $N$, where $P$ is given by the yellow and red simplices, and $E$ is given by the red simplices. The index pair for $S$ in $N$ is also an index pair in the sense of Definition \ref{def:ip}. For all integers $k \geq 0$, $H_k(P,E) = 0$. }
\label{fig:conley-index-examples}
\end{figure}
We depict two examples of index pairs in Figure \ref{fig:conley-index-examples}. In \cite{LKMW19}, the authors observed the following. 
\begin{proposition}{\cite[Proposition 5.16]{LKMW19}}
Let $(P,E)$ and $(P',E')$ denote index pairs for an isolated invariant set $S$. Then $H_k(P,E) = H_k(P',E')$. 
\end{proposition}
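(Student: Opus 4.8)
The plan is to prove that any two index pairs for the same isolated invariant set $S$ have isomorphic relative homology by connecting them through a finite chain of index pairs for $S$,
$$ (P,E) = (P_0,E_0),\ (P_1,E_1),\ \dots,\ (P_n,E_n) = (P',E'), $$
in which each consecutive pair is comparable under inclusion and each such inclusion induces an isomorphism on relative homology. Two kinds of ``moves'' along this chain are needed: purely topological (excision) moves, and dynamical moves that use the multivalued map $F_{\mv{}}$ to bring a pair of index pairs into a form where excision applies. Granting these, composing the resulting isomorphisms proves $H_k(P,E) \cong H_k(P',E')$ for every $k$.

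For the topological move I would prove: if $(Q,D)$ and $(Q',D')$ are both index pairs for $S$ with $Q \subseteq Q'$, $D \subseteq D'$, $Q' = Q \cup D'$, and $D = Q \cap D'$, then the inclusion $(Q,D) \hookrightarrow (Q',D')$ induces an isomorphism $H_k(Q,D) \cong H_k(Q',D')$ for all $k$. Since all four sets are subcomplexes (closedness is part of Definition~\ref{def:ip}), this is exactly simplicial excision, $H_*(Q\cup D', D') \cong H_*(Q, Q\cap D')$, and uses no dynamics; the only thing to check is that the intermediate sets one writes down really are closed.

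For the dynamical move, given arbitrary index pairs $(P,E)$ and $(P',E')$, I would first replace them by index pairs supported inside a common isolating set — e.g.\ by intersecting and unioning with $\cl(P\setminus E)$ and $\cl(P'\setminus E')$ — and check, using $\inv_{\mv{}}(P\setminus E) = S = \inv_{\mv{}}(P'\setminus E')$ together with condition~2 of the definition of an isolated invariant set (paths through $S$ starting and ending in $S$ stay in $S$), that the three conditions of Definition~\ref{def:ip} are preserved. One then interpolates: the cleanest route I see is to show that replacing $P$ (respectively $E$) by the part of its forward image under $F_{\mv{}}$ lying in the common isolating set, and then adjoining the appropriate closures, again yields an index pair for $S$ that is joined to the original by excision moves; iterating this and its backward-image analogue, and doing the same to $(P',E')$, lets the two chains meet. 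Each step reduces to verifying the three conditions of Definition~\ref{def:ip} — in particular $F_{\mv{}}(E)\cap P \subseteq E$ and $F_{\mv{}}(P\setminus E) \subseteq P$ — directly from the formula $F_{\mv{}} = [\,\cdot\,]_{\mv{}} \cup \cl(\cdot)$.

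I expect the main obstacle to be this dynamical reduction: writing down the intermediate index pairs explicitly and proving they satisfy \emph{all three} axioms of Definition~\ref{def:ip}, especially the invariance condition $S = \inv_{\mv{}}(\,\cdot\,)$, while simultaneously arranging that consecutive intermediates differ only by an excision move. The excision lemma itself is routine combinatorial topology; the real content is controlling how $F_{\mv{}}$ moves simplices in and out of a candidate pair without changing its maximal invariant set, and it is here that the hypothesis that $S$ is \emph{isolated} (not merely invariant) must be used. This follows the classical template for independence of the Conley index from the index pair, and is the strategy behind \cite[Proposition 5.16]{LKMW19}.
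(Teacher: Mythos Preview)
The paper does not prove this proposition; it is quoted verbatim as \cite[Proposition 5.16]{LKMW19} and used as a black box. So there is no in-paper argument to compare your proposal against.

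That said, your sketch follows the classical Conley-theory template (a finite chain of index pairs linked by excision-type moves and dynamical adjustments), and you have correctly located where the work lies: verifying that each intermediate pair still satisfies all three axioms of Definition~\ref{def:ip}, especially $S = \inv_{\mv{}}(P\setminus E)$. One remark worth making is that in the combinatorial setting of \cite{LKMW19} there is typically a shorter route than the full classical chain: since $P\setminus E$ is locally closed (convex) and $\mv{}$-compatible, one can often compare any index pair directly to the canonical one $(\cl(S),\mo(S))$ of Proposition~\ref{prop:ip} via a single excision argument, rather than iterating forward and backward images of $F_{\mv{}}$. Your outline is not wrong, but it imports more of the continuous-time machinery than the finite setting strictly requires; if you want to actually write the proof rather than cite it, looking for that direct comparison would likely be cleaner than the interpolation you describe.
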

\begin{proposition}{\cite[Proposition 5.3]{LKMW19} }
The pair $(\cl(S), \mo(S) )$ is an index pair for the isolated invariant set $S$. 
\label{prop:ip}
\end{proposition}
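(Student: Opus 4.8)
\section*{Proof proposal}

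The plan is to check the three conditions of Definition~\ref{def:ip} directly, with $P := \cl(S)$ and $E := \mo(S) = \cl(S) \setminus S$. First I would dispatch the standing hypotheses of an index pair: $P$ is closed by definition of $\cl$, and $E \subseteq P$ is immediate. The only nontrivial preliminary --- and the step I expect to be the main obstacle --- is that $E = \mo(S)$ is itself closed, i.e.\ that an isolated invariant set is locally closed; this is the place where the isolating set must genuinely be used. Fix an isolating set $N$ for $S$ and take $\sigma \in \cl(\mo(S))$. Since $\cl(\mo(S)) \subseteq \cl(\cl(S)) = \cl(S)$ we get $\sigma \in \cl(S)$, so it remains to exclude $\sigma \in S$. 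Assuming $\sigma \in S$, choose $\tau \in \mo(S)$ with $\sigma \leq \tau$ and then $\upsilon \in S$ with $\tau \leq \upsilon$ (both exist because $\sigma \in \cl(\mo(S))$ and $\mo(S) \subseteq \cl(S)$). Then $\upsilon, \tau, \sigma$ is a path: $\tau \in \cl(\upsilon) \subseteq F_{\mv{}}(\upsilon)$ and $\sigma \in \cl(\tau) \subseteq F_{\mv{}}(\tau)$; it lies in $N$ because $\cl(S) \subseteq N$; and its endpoints $\upsilon, \sigma$ lie in $S$. The isolation condition then forces $\tau \in S$, contradicting $\tau \in \mo(S)$, so $\mo(S)$ is closed.

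Condition~(3) is immediate: $P \setminus E = \cl(S)\setminus(\cl(S)\setminus S) = S$, and $\inv_{\mv{}}(S) = S$ because $S$ is an invariant set. For condition~(2), take $\sigma \in P \setminus E = S$; then $F_{\mv{}}(\sigma) = [\sigma]_{\mv{}} \cup \cl(\sigma)$, with $\cl(\sigma) \subseteq \cl(S) = P$ and $[\sigma]_{\mv{}} \subseteq S \subseteq P$ since $S$ is $\mv{}$-compatible and hence a union of multivectors; so $F_{\mv{}}(P\setminus E) \subseteq P$.

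For condition~(1), let $\sigma \in E = \mo(S)$ and $\tau \in F_{\mv{}}(\sigma) \cap P$; I must show $\tau \in E$, and since $\tau \in \cl(S)$ is already known this amounts to $\tau \notin S$. Suppose $\tau \in S$ and split according to whether $\tau$ enters $F_{\mv{}}(\sigma) = [\sigma]_{\mv{}} \cup \cl(\sigma)$ through the multivector term or the closure term. If $\tau \in [\sigma]_{\mv{}}$, then $[\tau]_{\mv{}} = [\sigma]_{\mv{}}$, and $\mv{}$-compatibility of $S$ gives $[\tau]_{\mv{}} \subseteq S$, forcing $\sigma \in S$ --- a contradiction. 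If instead $\tau \in \cl(\sigma)$, i.e.\ $\tau \leq \sigma$, pick $\upsilon \in S$ with $\sigma \leq \upsilon$ (possible since $\sigma \in \mo(S) \subseteq \cl(S)$); then $\upsilon, \sigma, \tau$ is a path in $N$ with endpoints $\upsilon, \tau \in S$, so the isolation condition forces $\sigma \in S$, again a contradiction. Having verified all three conditions, $(\cl(S), \mo(S))$ is an index pair for $S$. I expect the local-closedness step and the two-case analysis for condition~(1) to be the only parts needing real care; everything else is bookkeeping with the definitions of $F_{\mv{}}$, $\cl$, and $\mv{}$-compatibility.
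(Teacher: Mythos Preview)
The paper does not supply its own proof of this proposition; it is simply quoted from \cite[Proposition 5.3]{LKMW19}, so there is nothing in the present paper to compare your argument against.

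That said, your direct verification is correct. A couple of small remarks: you tacitly use $\cl(S)\subseteq N$, which follows because $N$ is closed and $S\subseteq N$ (the latter is implicit in ``$S$ is isolated by $N$'' but is not spelled out in the paper's definition, so you might want to state it explicitly). Also, your proof that $\mo(S)$ is closed is essentially a proof that an isolated invariant set is locally closed (equivalently, convex in the face poset); in the cited source this is established separately, but deriving it here from the isolation condition, as you do, is entirely legitimate and self-contained. The two-case split for condition~(1) is handled cleanly. Overall the argument is sound and would serve well as a self-contained proof in place of the citation.
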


In \cite{DMS2020}, the authors developed a method to summarize the changing Conley index of a series of invariant sets by using zigzag persistence. Given index pairs $(P_1, E_1)$, $(P_2, E_2)$ for isolated invariant sets under $\mv{1}$, $\mv{2}$, a natural approach to compute the changing Conley index is to use the relative zigzag filtration in Equation \ref{eqn:zigzagreg}. 
\begin{equation}
    (P_1, E_1) \supseteq (P_1 \cap P_2, E_1 \cap E_2) \subseteq (P_2, E_2).
    \label{eqn:zigzagreg}
\end{equation}
Unfortunately, $(P_1 \cap P_2, E_1 \cap E_2)$ need not be an index pair under $\mv{1}$, $\mv{2}$, or the intersection multivector field, $\mvint{1}{2}$ (see Definition~\ref{def:multivector}). Hence, if we extract the barcodes from such a relative zigzag filtration, we may not actually compute a changing Conley index. To rectify this, the authors in \cite{DMS2020} introduced a particular type of index pair.
\begin{definition}[Index Pair in $N$]
Let $S$ be an isolated invariant set and let $N$ denote an isolating set for $S$. The pair of closed sets $E \subseteq P \subseteq N$ is an index pair in $N$ for $S$ if all of the following conditions are met:
\begin{enumerate}
    \item $F_{\mv{}}( E ) \cap N \subseteq E$
    \item $F_{\mv{}}( P ) \cap N \subseteq P$
    \item $F_{\mv{}}( P \setminus E ) \subseteq N$
    \item $S = \inv( P \setminus E)$
\end{enumerate}
\end{definition}
The right image in Figure \ref{fig:conley-index-examples} is an index pair in $N$, where $N$ is given by the colored simplices, $P$ is given by the red and yellow simplices, and $E$ is given by the red simplices. The set $P \setminus E$ is an isolated invariant set, and $P\setminus E$ is equal to the set of yellow simplices.
\begin{proposition}{\cite[Theorem 8]{DMS2020}}
Let $(P,E)$ denote an index pair in $N$ for the isolated invariant set $S$. Then $(P,E)$ is an index pair for $S$ in the sense of Definition \ref{def:ip}.
\end{proposition}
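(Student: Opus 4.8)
The plan is to verify the three defining conditions of Definition~\ref{def:ip} one at a time, drawing on the four conditions in the definition of an index pair in $N$, the inclusion $E \subseteq P \subseteq N$, and the fact that the multivalued map $F_{\mv{}}$ is monotone with respect to inclusion (immediate from $F_{\mv{}}(A) = \bigcup_{\sigma \in A} F_{\mv{}}(\sigma)$). One also records that $P$ and $E$ are closed, so $(P,E)$ is a legitimate candidate pair; this is inherited verbatim from the hypothesis.

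First, condition~3 of Definition~\ref{def:ip}, namely $S = \inv_{\mv{}}(P \setminus E)$, is precisely condition~4 in the definition of an index pair in $N$, so there is nothing to do.

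Next, for condition~1, $F_{\mv{}}(E) \cap P \subseteq E$: since $P \subseteq N$, we have $F_{\mv{}}(E) \cap P \subseteq F_{\mv{}}(E) \cap N$, and the latter is contained in $E$ by condition~1 of the index pair in $N$. Finally, for condition~2, $F_{\mv{}}(P \setminus E) \subseteq P$ — the only step requiring more than one hypothesis — I would argue as follows. From $P \setminus E \subseteq P$ and monotonicity of $F_{\mv{}}$ we get $F_{\mv{}}(P \setminus E) \subseteq F_{\mv{}}(P)$; condition~3 of the index pair in $N$ gives $F_{\mv{}}(P \setminus E) \subseteq N$; hence $F_{\mv{}}(P \setminus E) \subseteq F_{\mv{}}(P) \cap N$, which by condition~2 of the index pair in $N$ is contained in $P$.

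I expect no genuine obstacle here: the content lies entirely in unwinding definitions, the key observation being that the $N$-relative condition $F_{\mv{}}(P)\cap N\subseteq P$ is strong enough to recover $F_{\mv{}}(P\setminus E)\subseteq P$ once one knows (via condition~3 on $N$) that the image of $P\setminus E$ never leaves $N$. The only points to handle with a little care are the monotonicity of $F_{\mv{}}$ on sets and the bookkeeping with the inclusion $P \subseteq N$.
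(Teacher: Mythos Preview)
Your proof is correct. The paper itself does not supply a proof of this proposition; it is simply quoted from \cite[Theorem~8]{DMS2020}, so there is no in-paper argument to compare against. Your verification of the three conditions of Definition~\ref{def:ip} is the natural one, and in particular your handling of condition~2---combining $F_{\mv{}}(P\setminus E)\subseteq F_{\mv{}}(P)$ with $F_{\mv{}}(P\setminus E)\subseteq N$ to land in $F_{\mv{}}(P)\cap N\subseteq P$---is exactly the intended mechanism.
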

Furthermore, index pairs in $N$ have the property that their intersections give index pairs.
\begin{theorem}{\cite[Theorem 10]{DMS2020} }
Let $(P_1,E_1)$ denote an index pair in $N$ for $S_1$ under $\mv{1}$ and let $(P_2,E_2)$ denote an index pair in $N$ for $S_2$ under $\mv{2}$. The pair $(P_1 \cap P_2, E_1 \cap E_2)$ is an index pair in $N$ for $\inv( (P_1 \cap P_2 ) \setminus (E_1 \cap E_2) )$ under $\mvint{1}{2}$. 
\label{thm:interindexpair}
\end{theorem}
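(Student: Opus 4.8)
The plan is to verify, one at a time, the four conditions in the definition of an index pair in $N$ for the pair $(P_1\cap P_2,\,E_1\cap E_2)$ with ambient field $\mvint{1}{2}$, taking $S:=\inv_{\mvint{1}{2}}\big((P_1\cap P_2)\setminus(E_1\cap E_2)\big)$, and then to check the side requirements implicit in that definition: that $P_1\cap P_2$ and $E_1\cap E_2$ are closed with $E_1\cap E_2\subseteq P_1\cap P_2$, and that $S$ is an isolated invariant set isolated by $N$. Closedness and the containment are immediate, since a finite intersection of closed sets is closed and $E_i\subseteq P_i$.

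Everything else hinges on one identity for the dynamics of an intersection field. The $\mvint{1}{2}$-class of a simplex $\sigma$ is exactly $[\sigma]_{\mvint{1}{2}}=[\sigma]_{\mv{1}}\cap[\sigma]_{\mv{2}}$, so distributivity of $\cap$ over $\cup$ gives
\[
  \dynint{1}{2}(\sigma)=\big([\sigma]_{\mv{1}}\cap[\sigma]_{\mv{2}}\big)\cup\cl(\sigma)=\big([\sigma]_{\mv{1}}\cup\cl(\sigma)\big)\cap\big([\sigma]_{\mv{2}}\cup\cl(\sigma)\big)=\dyn{1}(\sigma)\cap\dyn{2}(\sigma),
\]
and taking unions over a set $A$ yields $\dynint{1}{2}(A)\subseteq\dyn{1}(A)\cap\dyn{2}(A)$, while each $\dyn{i}$ is monotone in its set argument. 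Conditions 1--3 of ``index pair in $N$'' all say that $\dynint{1}{2}$ applied to some set, possibly intersected with $N$, lies in a prescribed set; since $\dynint{1}{2}$ is pointwise smaller than both $\dyn{1}$ and $\dyn{2}$, such conditions only get easier. For Condition 1, $\dynint{1}{2}(E_1\cap E_2)\cap N\subseteq(\dyn{1}(E_1)\cap N)\cap(\dyn{2}(E_2)\cap N)\subseteq E_1\cap E_2$ by monotonicity and Condition 1 of the hypotheses for $(P_1,E_1)$ and $(P_2,E_2)$; Condition 2 is the same computation with $P_i$ in place of $E_i$. Condition 3 needs a short case split, because $(P_1\cap P_2)\setminus(E_1\cap E_2)$ need not sit inside $P_1\setminus E_1$: if $\sigma$ lies in that difference then $\sigma\notin E_1$ or $\sigma\notin E_2$, and in the first case $\sigma\in P_1\setminus E_1$ so $\dynint{1}{2}(\sigma)\subseteq\dyn{1}(\sigma)\subseteq N$ by Condition 3 for $(P_1,E_1)$, the other case being symmetric. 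Condition 4 is true by our choice of $S$.

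It remains to certify that $S$ is a genuine isolated invariant set isolated by $N$. Invariance, $\inv_{\mvint{1}{2}}(S)=S$, follows from idempotency of $\inv$: an essential solution through a simplex of $\inv_{\mvint{1}{2}}(A)$ staying in $A$ has every simplex on it realized by a shift of the same solution inside $A$, so its whole image lies in $\inv_{\mvint{1}{2}}(A)$. For $\mvint{1}{2}$-compatibility and isolation by $N$ I would invoke the structural fact, already needed in \cite{DMS2020} to make sense of index pairs in $N$, that if closed sets $E\subseteq P\subseteq N$ satisfy Conditions 1--3 with respect to a field $\mathcal{W}$, then $\inv_{\mathcal{W}}(P\setminus E)$ is $\mathcal{W}$-compatible and isolated by $N$; applying it with $\mathcal{W}=\mvint{1}{2}$, $P=P_1\cap P_2$, $E=E_1\cap E_2$ finishes the proof.

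The set-chasing for Conditions 1--3 is mechanical once the identity $\dynint{1}{2}(\sigma)=\dyn{1}(\sigma)\cap\dyn{2}(\sigma)$ is in hand, so I expect no trouble there. The real content is the last paragraph: confirming that the intersection inherits not only the four numbered conditions but also $\mvint{1}{2}$-compatibility and isolation by $N$. If that structural lemma cannot be cited directly, compatibility should come from showing $[\sigma]_{\mvint{1}{2}}\subseteq P_1\cap P_2$ and $[\sigma]_{\mvint{1}{2}}\cap(E_1\cap E_2)=\emptyset$ for $\sigma\in(P_1\cap P_2)\setminus(E_1\cap E_2)$ via Conditions 1--3, and isolation from an analysis of $\mvint{1}{2}$-paths in $N$ (each of which is simultaneously a $\mv{1}$- and $\mv{2}$-path); I anticipate this to be the main obstacle.
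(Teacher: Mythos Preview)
The paper does not supply its own proof of this theorem; it is quoted verbatim as \cite[Theorem 10]{DMS2020} and used as a black box. So there is no in-paper argument to compare against.

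Your proposal is correct and is, in outline, the standard verification one would expect in \cite{DMS2020}: the identity $\dynint{1}{2}(\sigma)=\dyn{1}(\sigma)\cap\dyn{2}(\sigma)$ is the only real ingredient for Conditions~1--3, and you have handled the case split in Condition~3 properly. Your flagging of the ``side requirements'' (that $S$ be $\mvint{1}{2}$-compatible and isolated by $N$) as the substantive part is apt; the sketch you give---deduce compatibility of $(P_1\cap P_2)\setminus(E_1\cap E_2)$ from Conditions~1--3, then push compatibility to its invariant part, and obtain isolation by trapping any $\mvint{1}{2}$-path in $N$ with endpoints in $S$ inside $(P_1\cap P_2)\setminus(E_1\cap E_2)$ via an induction on Conditions~1--3---is exactly how that lemma is established. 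Nothing is missing.
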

Hence, given a sequence of index pairs in $N$, one can consider the relative zigzag filtration:
\begin{equation}
    (P_1, E_1) \supseteq (P_1 \cap P_2, E_1 \cap E_2) \subseteq (P_2, E_2).
    \label{eqn:zigzaginn}
\end{equation}
Unlike the construction in Equation \ref{eqn:zigzagreg}, each pair in Equation \ref{eqn:zigzaginn} is an index pair. Hence, by using this approach, we get a barcode that actually corresponds to a changing Conley Index. 

Given an isolated invariant $S$ in an isolating set $N$, one can easily compute an index pair in $N$ for $S$ by using the $\emph{push forward}$. For a set $A \subset K$, $\pf_N(A)$ is the set of simplices in a closed set $N$ which can be reached from paths in $N$ and originating at a simplex in $A$. Formally, we have the following.
\begin{proposition}{\cite[Proposition 15]{DMS2020}}
If $S$ is an isolated invariant set in $N$, then the pair $(\pf_N(\cl(S)), \pf_N(\mo(S)))$ is an index pair in $N$ for $S$. 
\label{prop:pfip}
\end{proposition}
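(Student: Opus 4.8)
The plan is to check, one at a time, the four defining conditions of an index pair in $N$, together with the standing requirements that $E \subseteq P \subseteq N$ and that $P$ and $E$ are closed, where $P := \pf_N(\cl(S))$ and $E := \pf_N(\mo(S))$. I would start by recording a handful of elementary properties of the push forward: (i) $\pf_N(A) \subseteq N$ by definition; (ii) if $N$ is closed then $\pf_N(A)$ is closed, since a face $\tau$ of some $\sigma \in \pf_N(A)$ lies in $N$ (closedness of $N$) and in $\cl(\sigma) \subseteq F_{\mv{}}(\sigma)$, so $\tau$ extends any $N$-path reaching $\sigma$; (iii) $\pf_N$ is monotone and $\pf_N(A \cup B) = \pf_N(A) \cup \pf_N(B)$; (iv) $A \cap N \subseteq \pf_N(A)$ via length-one paths; (v) $F_{\mv{}}(\pf_N(A)) \cap N \subseteq \pf_N(A)$, again by extending a reaching path by one step. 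Then $E, P \subseteq N$ by (i), both are closed by (ii), $E \subseteq P$ by (iii) and $\mo(S) \subseteq \cl(S)$, and conditions 1 and 2 are precisely (v) applied with $A = \mo(S)$ and $A = \cl(S)$. I use throughout that $S \subseteq N$, which is part of what it means for $N$ to isolate $S$.

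The heart of the argument is to identify $P \setminus E$. By (iii) and $\cl(S) = S \cup \mo(S)$ we have $P = \pf_N(S) \cup E$, so $P \setminus E = \pf_N(S) \setminus E$; I claim this equals $S$. For ``$\subseteq$'': suppose $\sigma \in \pf_N(S) \setminus E$ with an $N$-path $\rho$ from $\rho(0) \in S$ to $\rho(n) = \sigma$, and suppose $\sigma \notin S$. Take the largest $j$ with $\rho(j) \in S$; then $\rho(j+1) \in F_{\mv{}}(\rho(j)) = [\rho(j)]_{\mv{}} \cup \cl(\rho(j))$ with $\rho(j+1) \notin S$, and since $S$ is $\mv{}$-compatible $[\rho(j)]_{\mv{}} \subseteq S$, forcing $\rho(j+1) \in \cl(\rho(j)) \subseteq \cl(S)$, hence $\rho(j+1) \in \mo(S)$ --- but then $\rho$ restricted to $\{j+1,\dots,n\}$ witnesses $\sigma \in \pf_N(\mo(S)) = E$, a contradiction. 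For ``$\supseteq$'': $S \subseteq \pf_N(S) \subseteq P$ by (iv), so it suffices to show $S \cap E = \emptyset$. Suppose an $N$-path runs from $m \in \mo(S)$ to $s \in S$. Since $m \in \cl(S)$ there is $\sigma_0 \in S$ with $m \leq \sigma_0$, so $m \in \cl(\sigma_0) \subseteq F_{\mv{}}(\sigma_0)$; prepending $\sigma_0$ produces an $N$-path with both endpoints $\sigma_0, s \in S$ that passes through $m \notin S$, contradicting that $N$ isolates $S$. Hence $P \setminus E = S$.

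Given $P \setminus E = S$, the last two conditions follow at once: condition 4 is $\inv_{\mv{}}(P \setminus E) = \inv_{\mv{}}(S) = S$ since $S$ is invariant, and condition 3 is $F_{\mv{}}(P \setminus E) = F_{\mv{}}(S) \subseteq N$, because for $\sigma \in S$ we have $[\sigma]_{\mv{}} \subseteq S \subseteq N$ by $\mv{}$-compatibility and $\cl(\sigma) \subseteq N$ by closedness of $N$, so $F_{\mv{}}(\sigma) \subseteq N$. I expect the only genuine obstacle to be the sub-claim $S \cap E = \emptyset$: everything else is routine bookkeeping about $\pf_N$, whereas this is the single place where the isolation axiom is consumed, and the idea --- enlarging an offending $N$-path backwards through a coface of $m \in \mo(S)$ lying in $S$, so as to exhibit a path between two points of $S$ that leaves $S$ --- deserves to be stated as its own short lemma (it shows, more generally, that no $N$-path leads from $\mo(S)$ into $S$).
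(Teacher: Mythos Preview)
The paper does not prove this proposition at all: it is quoted verbatim from \cite[Proposition~15]{DMS2020} and used as a black box, so there is no ``paper's own proof'' to compare against. Your argument is therefore being evaluated on its own merits, and it is correct. The elementary push-forward facts (i)--(v) are standard and your derivations of closedness, the nesting $E\subseteq P\subseteq N$, and conditions~1 and~2 from them are clean. The identification $P\setminus E = S$ is the real content and both inclusions are handled properly: the ``$\subseteq$'' direction uses $\mv{}$-compatibility of $S$ to force the first exit from $S$ to land in $\mo(S)$, and the ``$\supseteq$'' direction correctly consumes the isolation hypothesis by prepending a coface in $S$ to an offending path. Conditions~3 and~4 then drop out immediately. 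The only point worth a remark is your standing assumption $S\subseteq N$; this is implicit in the paper's usage (and in the literature), but the paper's Definition of isolated invariant set does not state it explicitly, so if you want the argument to be fully self-contained you might flag it as a convention.
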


\subsection{Conley-Morse Graph}

In the previous subsections, we have presented the Morse decomposition and the Conley index. The Morse decomposition and the Conley index provide different information about an isolated invariant set. These two descriptors are often combined into the \emph{Conley-Morse graph}, which represents a Morse decomposition and contains information about the Conley indices of the Morse sets in the decomposition.
\begin{definition}[Conley-Morse graph]
Let $\md{}$ denote a Morse decomposition, and let $G$ denote the directed graph such that there is a bijection $f \; : \; \md{} \to V(G)$, and there exists a connection from $M \in \md{}$ to $M' \in \md{}$ if and only if there there exists a directed edge from $f(M)$ to $f(M')$. The \emph{Conley-Morse graph} for $\md{}$ is the graph $G$ where each vertex $f(M) = v \in V(G)$ is annotated with the Poincar\'{e} polynomial $\sum_{i=0}^m\beta_it^i$ where $m$ is the largest integer for which the $m$-dimensional Conley index is nontrivial and $\beta_i$ is the rank of the $i$-dimensional Conley index of $M$.
\end{definition}
\begin{figure}[htbp]
\centering
\begin{tabular}{cc}
  \includegraphics[width=55mm]{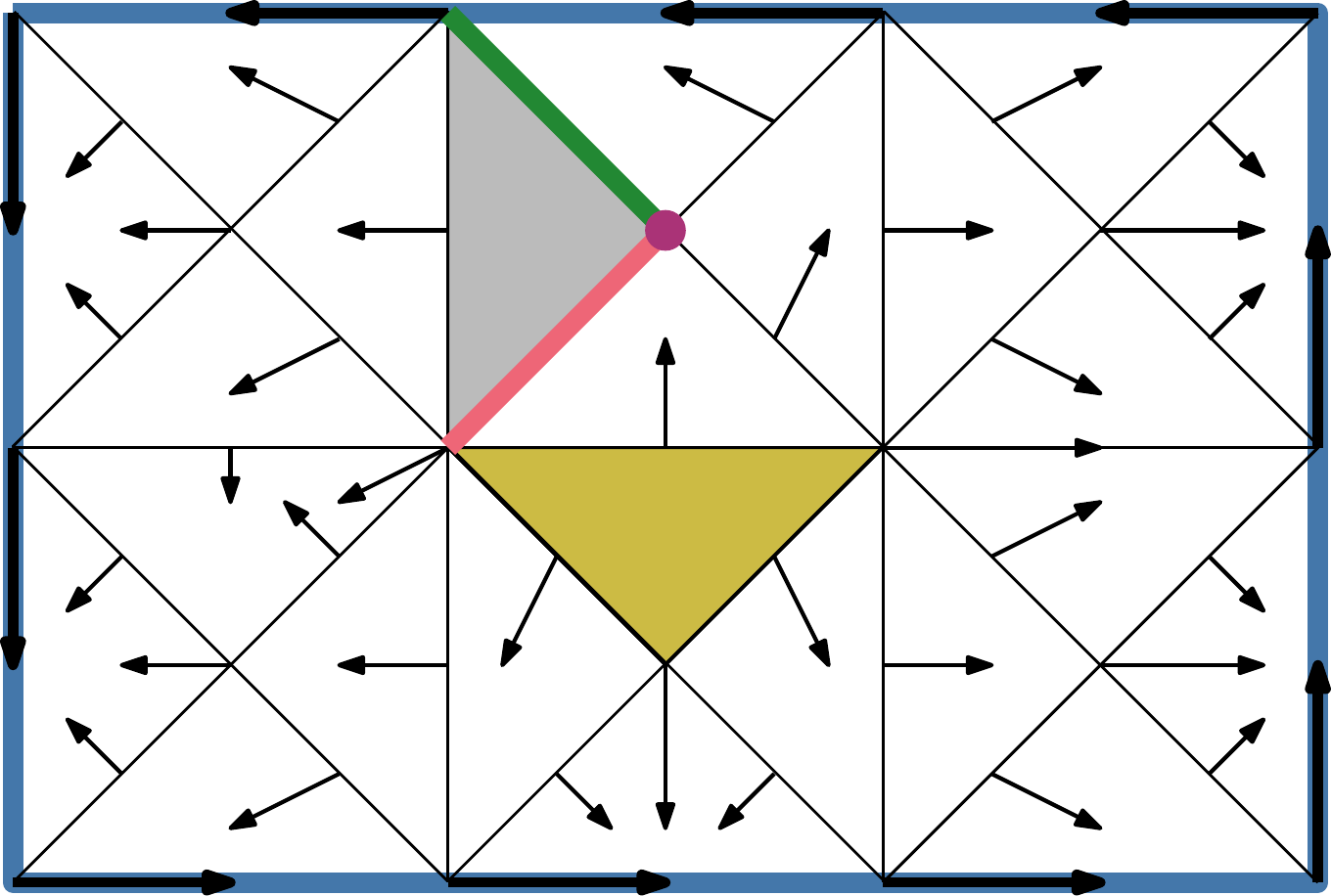}&
  \includegraphics[width=30mm]{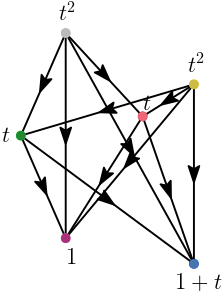}
\end{tabular}
\caption{ On the left, we show a multivector field and the minimal Morse decomposition for the maximal invariant set in $N$, where $N$ is the entire rectangle. The maximal invariant set in $N$ is also the entire rectangle because every colored simplex is critical and every white or black simplex is on a path from the golden critical triangle to the blue periodic attractor or from the gray critical triangle to the blue periodic attractor. Each of the six Morse sets in the minimal Morse decomposition is represented by a vertex in the Conley-Morse graph (right) with a matching color. Each vertex is annotated with a Poincar\'e polynomial that summarizes the Conley indices of the Morse set. }
\label{fig:ex-conley-morse}
\end{figure}
For convenience, we use $M$ to refer both to a Morse set and to its corresponding vertex in the Conley-Morse graph. We include an example of a Morse decomposition and the associated Conley-Morse graph in Figure \ref{fig:ex-conley-morse}.

\section{Conley-Morse Filtrations}
\label{sec:persist}

We now move to develop a method to compute a set of filtrations for a sequence of Conley-Morse graphs. These filtrations need to capture key features of the sequence: how the structure of the Conley-Morse graphs changes  throughout the sequence, and how the Conley index changes at individual vertices; see
Figures \ref{fig:conley-morse-graphs}, \ref{fig:extracted-filtrations}, and \ref{fig:new-approach}. Formally, we assume that we are given a sequence of Conley-Morse graphs $\{G_i\}_{i=1}^n$. Each Conley-Morse graph corresponds to a Morse decomposition $\{\md{i}\}_{i=1}^n$ of an isolated invariant set $\{S_i\}_{i=1}^n$ under a multivector field $\{\mv{i}\}_{i = 1}^n$. We assume that each $S_i$ is in some fixed isolating set $N$. We also assume that for each $M_j \in \md{i}$, we have an associated index pair $(P_j, E_j)$. This may seem particularly daunting, in that our approach requires a specified isolated invariant set and Morse decomposition for every multivector field. If a practitioner does not have a particular isolated invariant set or Morse decomposition in mind, a canonical choice is always to take the maximal invariant set $S_i$ in $N$ under $\mv{i}$ and to take the minimal Morse decomposition of $S_i$. The minimal Morse decomposition can be easily computed by converting a multivector field into a directed graph and computing maximal strongly connected components. For more details, see \cite{LKMW19}. In addition, we require that each Morse set $M$ is associated with a particular index pair $(P,E)$ in $N$. A canonical choice is to use Proposition \ref{prop:pfip} and let $P := \pf(\cl(M))$ and $E:= \pf(\mo(M))$, but this is in general not resilient to noise. We include a technique for computing noise-resilient index pairs in Section \ref{sec:multinode}.

In this section, we extract Conley index zigzag filtrations from a sequence of Conley-Morse graphs $G_1$, $G_2$, \ldots, $G_m$, corresponding to Morse decompositions $\md{1}$, $\md{2}$, \ldots, $\md{m}$. That is, we extract filtrations which represent the changing Conley index at the vertices of the Conley-Morse graph. We call these \emph{Conley-Morse} filtrations, and we postpone discussion of \emph{graph filtrations}, which capture the changing structure of the Conley-Morse graph itself, to the next section. A first approach to extracting Conley-Morse filtrations is to consider all possible zigzag filtrations via vertex sequences (three such sequences are shown in Figure~\ref{fig:extracted-filtrations}). Each vertex corresponds to an index pair, so for each sequence of vertices taken from consecutive Conley-Morse graphs, we can use Theorem \ref{thm:interindexpair} to get a relative zigzag filtration. If $|G_i| = n$ for all $i$, then there are $\Theta(n^m)$ possible zigzag filtrations. This is clearly intractable for mid-to-large values of $m$ and $n$. We aim to reduce the number of filtrations while still capturing the changing Conley index. Our key observation is that if $(P_1, E_1)$ and $(P_2,E_2)$ are index pairs, then clearly
a homology class persists from $(P_1,E_1)$ to $(P_2,E_2)$ only if $(P_1 \setminus E_1) \cap (P_2 \setminus E_2) \neq \emptyset$, because an empty intersection leaves no room for the class to be
present in both spaces under inclusions. 

Consider a sequence of index pairs in $N$, denoted $\{(P_i,E_i)\}_{i = a}^b$, where each $(P_i, E_i)$ is an index pair for a Morse set $M_i \in \md{i}$. Such a sequence is \emph{feasible} if for all $a \leq i < b$, $(P_i \setminus E_i) \cap (P_{i+1} \setminus E_{i+1}) \neq \emptyset$. A feasible sequence of index pairs is a \emph{maximal} sequence if there does not exist a $(P_{a-1},E_{a-1})$ such that $(P_{a-1} \setminus E_{a-1}) \cap (P_a \setminus E_a) \neq \emptyset$, and there does not exist a $(P_{b+1}, E_{b+1})$ such that $(P_{b} \setminus E_{b}) \cap (P_{b+1} \setminus E_{b+1}) \neq \emptyset$. Hence, each feasible sequence is contained in a maximal sequence. Each maximal sequence gives a relative zigzag filtration by intersecting consecutive index pairs (recall Theorem \ref{thm:interindexpair}). We call a relative zigzag filtration that is given by a maximal sequence a \emph{Conley-Morse filtration}, and we use the set of Conley-Morse filtrations to compute the changing Conley indices at the vertices of a sequence of Conley-Morse graphs. 

\begin{algorithm2e}
\SetAlgoLined
\KwIn{ Sequence of Conley-Morse graphs $G_i$ corresponding to Morse decompositions $\mathcal{M}_i$ for isolated invariant sets in $N$. Each $M_i \in \md{i}$ is associated with a unique index pair $(P,E)$.} 
\KwOut{ Set of all Conley-Morse filtrations for $\{G_i\}_{i=1}^n$}
    
    $alive\_seqs \gets \texttt{new set}()$
    
    $all\_seqs \gets \texttt{new set}()$
    
    \cancel{\For{ $M \in \md{1}$ }{
        $(P,E) \gets \texttt{FindIndexPair}( M )$
        
        $seq \gets \texttt{new Sequence()}$
        
        $\texttt{append}( seq, (P,E) )$
        
        $\texttt{add}(alive\_seqs,seq)$
    }}

    \For{$i \in \{1, \ldots, n\}$}
    {
        $to\_remove \gets \texttt{new set}()$
        
        $still\_alive \gets \texttt{new set}()$
        
        \For{$seq \in alive\_seqs$}
        {
            
            $(P',E') \gets \texttt{LastIndexPair}(seq)$
        
            \For{$M \in \md{i}$}
            {
            
                $(P,E) \gets \texttt{IndexPair}( M )$
                
                \If{ $(P \setminus E) \cap (P' \setminus E') \neq \emptyset$ }{
                    
                    $new\_seq \gets \texttt{copy}(seq)$
                    
                    $\texttt{append}(new\_seq, (P,E) )$
                    
                    $\texttt{add}( still\_alive, new\_seq )$
                    
                    $\texttt{IsInSequence}(M) \gets \texttt{True}$
                    
                    $\texttt{add}(to\_remove, seq )$
                }
            }
        }
        
        $dead\_seqs \gets alive\_seqs \setminus to\_remove$
        
        $alive\_seqs \gets still\_alive$
        
        $all\_seqs \gets all\_seqs \cup dead\_seqs$
        
        \For{$M \in \md{i}$}
        {
            \If{ $\texttt{IsInSequence}(M) = \texttt{False}$}
            {
                $seq \gets \texttt{new Sequence}()$
                
                $\texttt{append}(seq, \texttt{IndexPair}(M) )$
                
                $\texttt{add}(alive\_seqs,seq)$
            }
        }
    }
    
    $all\_seqs \gets all\_seqs \cup alive\_seqs$
    
    $filtrations \gets \texttt{ConvertToFiltrations}( all\_seqs )$
    
    \Return{ $filtrations$ }

 \caption{$\texttt{FindConleyMorseFiltrations}( \{G_i\}_{i = 1}^n, \{\md{i}\}_{i = 1}^n)$ }
 \label{alg:find-max-zigzags}
\end{algorithm2e}

Algorithm \ref{alg:find-max-zigzags} is our formal approach for computing the set of Conley-Morse filtrations. For completeness, we prove that it successfully computes the set. 

\begin{proposition}
Given a sequence of Conley-Morse graphs $\{G_i\}_{i=1}^n$ corresponding to a sequence of Morse decompositions $\{ \md{i} \}_{i=1}^n$ of isolated invariant sets in $N$, where each $M_i \in \md{i}$ is associated with a unique index pair $(P,E)$, Algorithm \ref{alg:find-max-zigzags} outputs all Conley-Morse filtrations. 
\end{proposition}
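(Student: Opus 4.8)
The plan is to prove correctness by establishing two directions: every sequence output by Algorithm~\ref{alg:find-max-zigzags} is a maximal feasible sequence (soundness), and every maximal feasible sequence is output (completeness). Since a Conley-Morse filtration is by definition the relative zigzag filtration obtained from a maximal feasible sequence via consecutive intersections, and the final line \texttt{ConvertToFiltrations} performs exactly this conversion, it suffices to show that \texttt{all\_seqs} at termination equals the set of all maximal feasible sequences. I would state and prove a loop invariant for the outer \texttt{for} loop: after iteration $i$, (a) \texttt{alive\_seqs} is precisely the set of feasible sequences $\{(P_j,E_j)\}_{j=a}^{i}$ whose last index pair comes from $\md{i}$ that are ``left-maximal'' (not extendable backwards), and (b) \texttt{all\_seqs} is precisely the set of maximal feasible sequences that terminate at some index $b < i$, i.e.\ sequences that cannot be extended forward past step $b$.

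The key steps, in order: First I would verify the base case. Before the loop, \texttt{alive\_seqs} and \texttt{all\_seqs} are empty; at the end of the first iteration (where no prior alive sequences exist, so the first inner double loop does nothing and \texttt{to\_remove} stays empty), every $M \in \md{1}$ has \texttt{IsInSequence}$(M) = $ \texttt{False}, so a new singleton sequence is created for each, giving \texttt{alive\_seqs} $= \{\, \langle (P,E)\rangle : M \in \md{1}\,\}$, which matches invariant (a); \texttt{all\_seqs} is still empty, matching (b) vacuously. Second, the inductive step: assuming the invariant holds after iteration $i-1$, I would trace the $i$-th iteration. Each alive sequence $seq$ with last pair $(P',E')$ is checked against every $M \in \md{i}$; if the overlap condition $(P\setminus E)\cap(P'\setminus E') \neq \emptyset$ holds, the extended copy is added to \texttt{still\_alive} and $seq$ is flagged in \texttt{to\_remove}. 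Thus \texttt{still\_alive} collects exactly the feasible one-step extensions into $\md{i}$ of previously-alive (left-maximal) sequences, and these are themselves left-maximal since their prefix was. The sequences in \texttt{dead\_seqs} $=$ \texttt{alive\_seqs} $\setminus$ \texttt{to\_remove} are exactly those alive sequences admitting no feasible extension into $\md{i}$ — these are forward-maximal, so moving them into \texttt{all\_seqs} is correct and matches invariant (b) with $b = i-1$. Finally, the second inner loop over $\md{i}$ creates fresh singletons exactly for those $M \in \md{i}$ never reached by an extension; such an $M$ has no feasible predecessor in $\md{i-1}$, so a singleton $\langle(P,E)\rangle$ is left-maximal, completing invariant (a). Third, termination: the line \texttt{all\_seqs} $\gets$ \texttt{all\_seqs} $\cup$ \texttt{alive\_seqs} after the loop absorbs the remaining left-maximal sequences, which at $i = n$ are automatically forward-maximal (no $\md{n+1}$ exists), so \texttt{all\_seqs} becomes exactly the set of all maximal feasible sequences. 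Conclude by applying the definition of a Conley-Morse filtration.

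The main obstacle I anticipate is handling the bookkeeping of the \texttt{IsInSequence} flag correctly: the pseudocode never explicitly resets it between outer iterations, so in a rigorous writeup I would either note that it is implicitly reset (initialized to \texttt{False}) at the start of each iteration $i$, or argue that the only place it is read — the final inner loop — is reached after it has been set for exactly the Morse sets of $\md{i}$ that received extensions during iteration $i$. A secondary subtlety is the edge case where a sequence $seq$ is extendable to \emph{some} $M \in \md{i}$ but not others: $seq$ must end up in \texttt{to\_remove} (so it is not wrongly reported as a dead/maximal sequence) while each valid extension is independently added to \texttt{still\_alive}; verifying that the \texttt{add}$(to\_remove, seq)$ call inside the \texttt{if} block achieves this — it is called once per successful match but \texttt{to\_remove} is a set, so idempotent — is the one place where the set semantics matter. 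Beyond these, the argument is a routine induction, and I would keep the prose tight rather than belaboring each line of pseudocode.
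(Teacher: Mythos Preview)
Your proposal is correct and follows essentially the same approach as the paper: both arguments establish soundness (every output sequence is maximal) and completeness (every maximal sequence is output), with the paper arguing informally ``by inspection'' where you supply an explicit loop invariant. Your treatment is more careful about bookkeeping subtleties such as the \texttt{IsInSequence} flag and the set semantics of \texttt{to\_remove}, which the paper glosses over, but the underlying structure of the argument is the same.
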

\begin{proof}
Note that Algorithm \ref{alg:find-max-zigzags} attempts to find all Conley-Morse filtrations by first finding all maximal sequences. Hence, it is sufficient to show that it correctly finds all maximal sequences, because it is trivial to convert these into Conley-Morse filtrations. Note that Algorithm \ref{alg:find-max-zigzags} constructs sequences by incrementally adding index pairs to already-started sequences. By inspection, we note that the algorithm only constructs a new sequence with the index pair $(P,E)$ from a Morse set in the Conley-Morse graph $G_i$ if there does not exist an index pair $(P',E')$ for a Morse set in the Conley-Morse graph $G_{i-1}$ where $(P' \setminus E') \cap (P \setminus E) \neq \emptyset$. Similarly, the algorithm only ceases to consider a sequence ending with the index pair $(P,E)$ for a Morse set in the Conley-Morse graph $G_i$ if there does not exist an index pair $(P',E')$ for a Morse set in the Conley-Morse graph $G_{i+1}$ where $(P \setminus E) \cap (P' \setminus E') \neq \emptyset$. In addition, note that the algorithm only appends the index pair $(P,E)$ to a sequence ending in $(P',E')$ if $(P \setminus E ) \cap (P' \setminus E') \neq \emptyset$. Hence, each sequence that is constructed by the algorithm is a maximal sequence. 

It remains to be shown that the algorithm constructs all possible maximal sequences. Assume there existed some maximal sequence $\{ (P_i, E_i) \}_{i=a}^b$ that was not included in $all\_seqs$ before they are converted to filtrations. In such a case, there must exist some $(P_k,E_k)$ which was not appended to the sequence $\{ (P_i, E_i) \}_{i = a}^{k-1}$. But by inspection, we see that $(P_k,E_k)$ is appended to all such sequences so long as $(P_k \setminus E_k) \cap (P_{k-1} \setminus E_{k-1}) \neq \emptyset$. Hence, this cannot be the case, which implies that $\{(P_i,E_i)\}_{i = a}^b$ is included in $all\_seqs$.
\end{proof}

The set of Conley-Morse filtrations and the associated barcodes for the Morse decompositions in Figure \ref{fig:changing-dynamical-system} are shown in Figure \ref{fig:new-approach}.
\section{Graph Filtrations}
\label{sec:graphstructure}

Now, we show how to find a zigzag filtration that corresponds to the changing structure of the Conley-Morse graph called a \emph{graph filtration}. We begin by reviewing some properties of multivector fields. 

\begin{proposition}[See e.g. \cite{DJKKLM19,Mr2017}]
Let $\mv{1}$ and $\mv{2}$ denote multivector fields over $K$, and let $\mv{2} \sqsubseteq \mv{1}$ (see Definition~\ref{def:multivector}). Then for all $\sigma \in K$, $F_{\mv{2}}(\sigma) \subseteq F_{\mv{1}}(\sigma)$. 
\label{prop:dynamics-generator-subset}
\end{proposition}
Proposition \ref{prop:path-subset} follows directly from Proposition \ref{prop:dynamics-generator-subset}.
\begin{proposition}
Let $\mv{1}$ and $\mv{2}$ denote multivector fields over $K$, and let $\mv{2} \sqsubseteq \mv{1}$. If there exists a path $\rho$ in $K$ under $\mv{2}$ from $\sigma_0$ to $\sigma_n$, then $\rho$ is a path under $\mv{1}$.
\label{prop:path-subset}
\end{proposition}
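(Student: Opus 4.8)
The plan is to reduce Proposition~\ref{prop:path-subset} directly to Proposition~\ref{prop:dynamics-generator-subset} by unwinding the definition of a path. Recall that a finite sequence $\sigma_0, \sigma_1, \ldots, \sigma_n$ is a path under a multivector field $\mv{}$ precisely when $\sigma_i \in F_{\mv{}}(\sigma_{i-1})$ for every $i = 1, \ldots, n$. So the statement is really a pointwise claim about consecutive terms, and the hypothesis $\mv{2} \sqsubseteq \mv{1}$ together with Proposition~\ref{prop:dynamics-generator-subset} gives exactly the needed containment of the generator maps.

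First I would fix the path $\rho$ under $\mv{2}$ and write it as $\rho(0) = \sigma_0, \rho(1) = \sigma_1, \ldots, \rho(n) = \sigma_n$, so that by definition $\sigma_i \in F_{\mv{2}}(\sigma_{i-1})$ for each $i \in \{1, \ldots, n\}$. Next, for each such $i$, I would invoke Proposition~\ref{prop:dynamics-generator-subset} with $\sigma := \sigma_{i-1}$ to conclude $F_{\mv{2}}(\sigma_{i-1}) \subseteq F_{\mv{1}}(\sigma_{i-1})$. Combining the two gives $\sigma_i \in F_{\mv{1}}(\sigma_{i-1})$ for every $i = 1, \ldots, n$, which is precisely the condition for $\rho$ to be a path under $\mv{1}$. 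Since $\rho$ visits the same simplices in the same order, no further bookkeeping is needed.

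There is essentially no obstacle here: the only thing to be careful about is making sure the indexing of the path is handled uniformly (the statement is phrased for a path from $\sigma_0$ to $\sigma_n$, matching the $[0,n]$ convention introduced earlier), and that Proposition~\ref{prop:dynamics-generator-subset} is applied at each intermediate simplex rather than only at the endpoints. Everything else is a one-line quantifier-over-$i$ argument, which is exactly why the excerpt remarks that the proposition "follows directly" from the preceding one.
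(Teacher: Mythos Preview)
Your proposal is correct and matches the paper's approach exactly: the paper simply states that Proposition~\ref{prop:path-subset} ``follows directly from Proposition~\ref{prop:dynamics-generator-subset},'' and your argument is precisely the step-by-step unpacking of that remark.
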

From these two results, we deduce an important property of Morse sets.
\begin{proposition}
Let $M_1$ denote an isolated invariant set in $N$ under $\mv{1}$ and $M_2$ denote a minimal Morse set in $N$ under $\mv{2}$, where $\mv{2} \sqsubseteq \mv{1}$. If $M_1 \cap M_2 \neq \emptyset$, then $M_2 \subseteq M_1$.
\label{prop:mdsub}
\end{proposition}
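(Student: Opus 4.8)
The plan is to argue that $M_1 \cap M_2$ is itself an invariant set under $\mv{2}$ and, being a nonempty subset of the minimal Morse set $M_2$, must coincide with $M_2$. The key structural fact that makes this work is Proposition~\ref{prop:min}: since $M_2$ is a minimal isolated invariant set under $\mv{2}$, any two simplices of $M_2$ are joined by a path (in both directions, by running the characterization both ways) lying entirely inside $M_2$. So I would pick a simplex $\sigma \in M_1 \cap M_2$ and an arbitrary $\tau \in M_2$, and try to show $\tau \in M_1$. This reduces the whole claim to showing that membership in $M_1$ propagates along $\mv{2}$-paths inside $M_2$.

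First I would set up the paths: by Proposition~\ref{prop:min} applied to $M_2$ under $\mv{2}$, there is a path $\rho$ from $\sigma$ to $\tau$ and a path $\rho'$ from $\tau$ to $\sigma$, both with image in $M_2$. By Proposition~\ref{prop:path-subset} (using $\mv{2} \sqsubseteq \mv{1}$), both $\rho$ and $\rho'$ are also paths under $\mv{1}$. Concatenating $\rho$ and $\rho'$ gives a path under $\mv{1}$ that starts at $\sigma \in M_1$, ends at $\sigma \in M_1$, and passes through $\tau$. Now I would invoke the fact that $M_1$ is isolated in $N$: by condition~2 of the definition of an isolated invariant set (the "no escape" condition applied to the isolating set $N$), a path under $\mv{1}$ contained in $N$ whose endpoints both lie in $M_1$ has its entire image in $M_1$. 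The path just constructed has image contained in $M_2 \subseteq S_2 \subseteq N$ — wait, more carefully, $M_2 \subseteq N$ since $M_2$ is an isolated invariant set in $N$ — so the path lies in $N$, its endpoints are $\sigma \in M_1$, and hence $\tau \in \im(\rho * \rho') \subseteq M_1$. Since $\tau \in M_2$ was arbitrary, $M_2 \subseteq M_1$.

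The main obstacle I anticipate is bookkeeping about which isolating set the paths live in and making sure the hypotheses of condition~2 of the isolated-invariant-set definition are met: that condition requires the path to lie in a \emph{closed} set $N$ that isolates $M_1$, with both endpoints in $M_1$. The endpoints being equal to $\sigma$ handles the endpoint requirement cleanly, and $M_2 \subseteq N$ handles containment, so the argument should go through once these are stated precisely. A secondary subtlety is that Proposition~\ref{prop:min} is phrased as "for all $\sigma,\tau \in S$ there exists a path from $\sigma$ to $\tau$" — applying it with the roles of $\sigma$ and $\tau$ swapped immediately gives the return path $\rho'$, so no extra work is needed there. One should also double-check that a "path" here is allowed to have length $0$ or to be concatenated freely; concatenation of paths under $F_{\mv{1}}$ is immediate from the definition since the gluing point satisfies the required membership on both sides.
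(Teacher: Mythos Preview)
Your proposal is correct and follows essentially the same approach as the paper: pick $\sigma \in M_1 \cap M_2$ and $\tau \in M_2$, use Proposition~\ref{prop:min} to get paths in $M_2$ under $\mv{2}$ in both directions, lift them to $\mv{1}$-paths via Proposition~\ref{prop:path-subset}, concatenate into a loop at $\sigma$ in $N$, and apply the isolation condition for $M_1$ to conclude $\tau \in M_1$. Your opening sentence about showing $M_1 \cap M_2$ is invariant is a slight misdescription of the plan you actually carry out, but the argument itself matches the paper's line for line.
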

\begin{proof}
Consider $\sigma \in M_1 \cap M_2$, and $\tau \in M_2$. By Proposition \ref{prop:min}, there exists a path $p$ in $N$  under $\mv{2}$ from $\sigma$ to $\tau$ and a path $q$ in $N$ under $\mv{2}$ from $\tau$ to $\sigma$. By concatenating the paths, we get a new path $r$ in $N$ under $\mv{2}$ that starts and ends at $\sigma\in M_1$. By Proposition \ref{prop:path-subset}, $r$ is a path under $\mv{2}$. But, $M_1$ is isolated by $N$, so $\im( r ) \subseteq M_1$. Hence, $\tau \in M_1$, so $M_2 \subseteq M_1$.
\end{proof}

Proposition \ref{prop:mdsub} is the starting point for constructing a zigzag filtration. For tractability, we focus on two Conley-Morse graphs $G_1$ and $G_2$, which correspond to Morse decompositions $\md{1}$ and $\md{2}$ for the isolated invariant sets $S_1$ and $S_2$ in $N$ under $\mv{1}$ and $\mv{2}$. It is natural to consider the Conley-Morse graph corresponding to the minimal Morse decomposition of $\inv(N)$ under $\mvint{1}{2}$. We call this graph $G_{1,2}$. There is a partial function $\iota_1 \; : \; V(G_{1,2}) \to V(G_1)$, where $\iota_1( M_{1,2} ) = M_1$ if $M_{1,2} \subseteq M_1$ (here, we use $M_1$ and $M_{1,2}$ to denote both a Morse set and its corresponding vertex in the Conley-Morse graph). There is a similar partial function $\iota_2 \; : \; V(G_{1,2}) \to V(G_{2})$. Note that $\iota_1$ and $\iota_2$ are not edge preserving. If there exists a directed edge from vertex $M_{1,2} \in G_{1,2}$ to vertex $M_{1,2}' \in G_{1,2}$, then there is a connection from $M_{1,2}$ to $M_{1,2}'$. By Proposition \ref{prop:path-subset}, there must exist a path from $\iota_1(M_{1,2})$ to $\iota_1(M_{1,2}')$ (likewise for $\iota_2(M_{1,2})$ to $\iota_2(M'_{1,2})$). But this path may intersect some other invariant set $M_1 \in \md{1}$, so there could exist an edge from $M_{1,2}$ to $M'_{1,2}$, but no edge from $\iota_1(M_{1,2})$ to $\iota_1(M'_{1,2})$. This phenomenon occurs in Figure \ref{fig:spurious-pers}. Hence, we need a slightly modified notion of a Conley-Morse graph under $\mvint{1}{2}$.
\begin{figure}[htpb]
\centering
\begin{tabular}{ccc}
  \includegraphics[width=42mm]{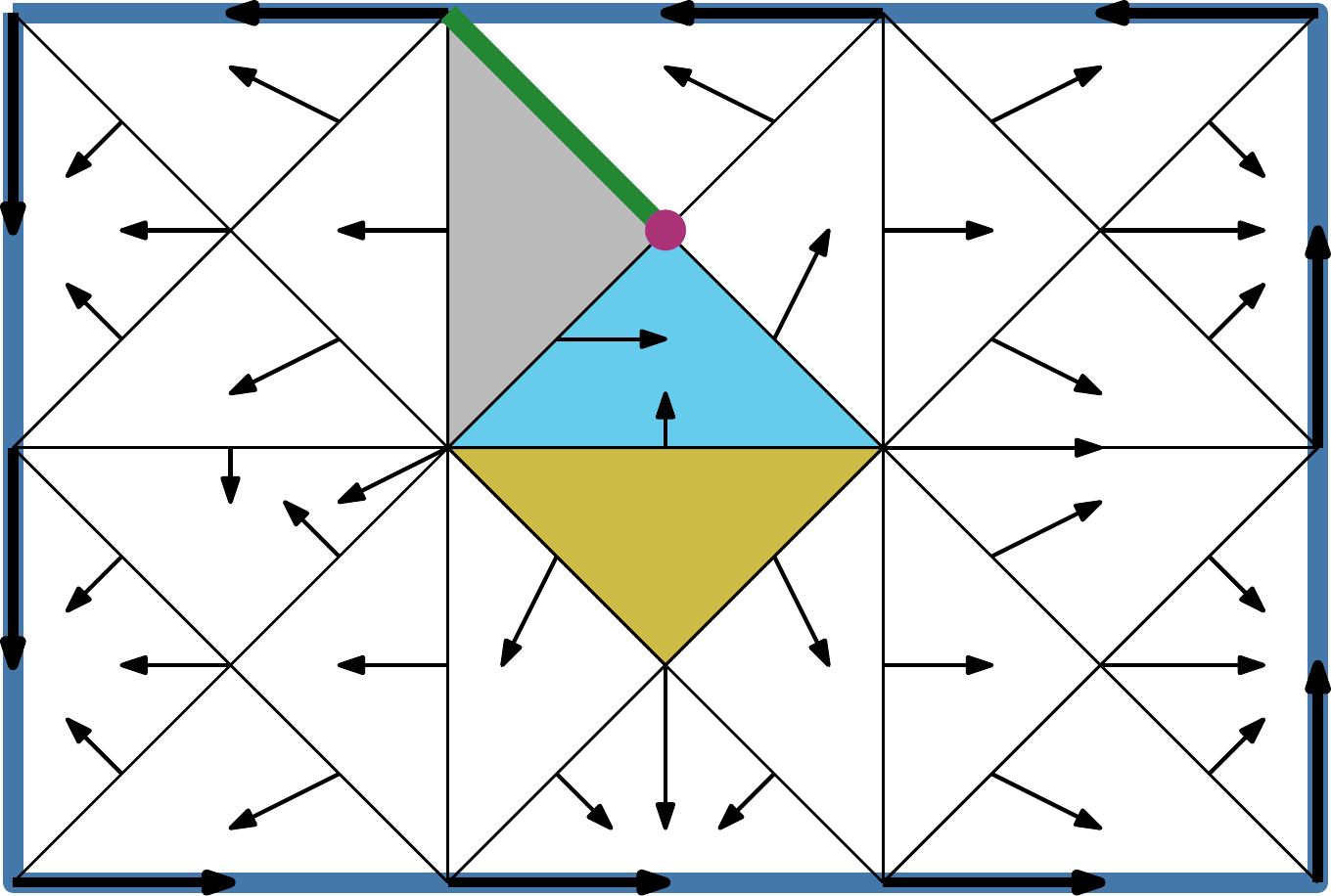}&
  \includegraphics[width=42mm]{fig/example-inter.pdf}&
  \includegraphics[width=42mm]{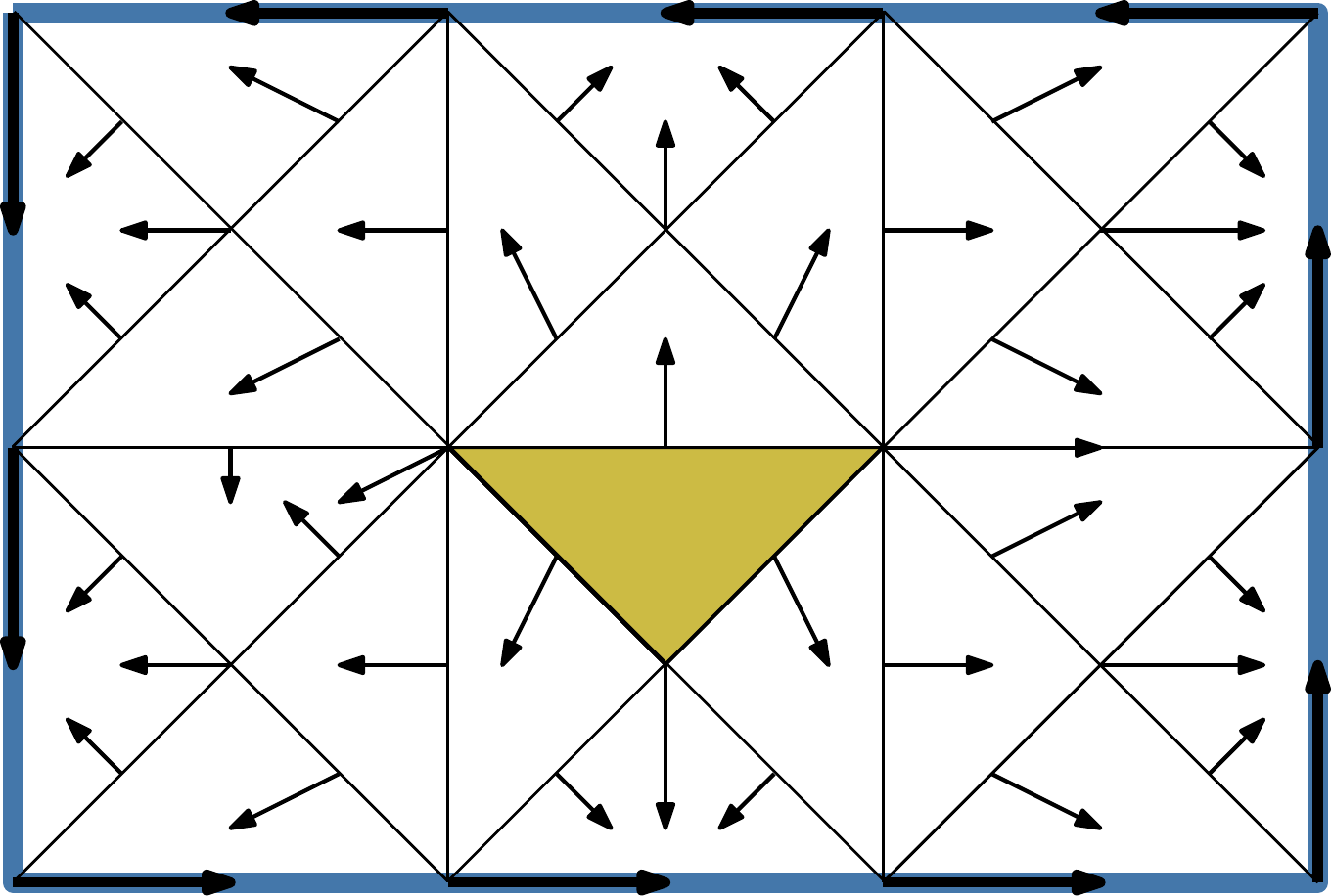}
\end{tabular}
\caption{If the multivector field on the left is $\mv{1}$ and the one on the right is $\mv{2}$, then the multivector field in the middle is $\mvint{1}{2}$. The maximal invariant set is the entire rectangle in all three multivector fields, and the Morse sets in the minimal Morse decomposition for this isolated invariant set are colored. In the middle, the Morse sets in gray, green, magenta, and pink are spurious because they are not contained in a Morse set on the right. Also in the middle, there is a connection from the golden triangle to the magenta vertex. But on the left, the connection passes through the turquoise triangle. Hence, the golden triangle-to-magenta vertex connection is not relevant.}
\label{fig:spurious-pers}
\end{figure} 
\begin{definition}[Relevant and Spurious]
Let $\md{1}$,$\md{2}$ denote Morse decompositions of isolated invariant sets $S_1$,$S_2$ in $N$ under $\mv{1}$,$\mv{2}$, and let $\md{1,2}$ denote the minimal Morse decomposition of $S_{1,2} = \inv(N)$ under $\mvint{1}{2}$. A Morse set $M_{1,2} \in \md{1,2}$ is \emph{relevant} if there exists an $M_1 \in \md{1}$ and an $M_2 \in \md{2}$ where $M_{1,2} \subseteq M_1, M_2$. If not, then $M_{1,2}$ is \emph{spurious}. 
\end{definition}

\begin{definition}[Relevant Connection]
Let $\md{1}$,$\md{2}$ denote Morse decompositions of isolated invariant sets $S_1$,$S_2$ in $N$  under $\mv{1}$,$\mv{2}$, and let $\md{1,2}$ denote the minimal Morse decomposition of $\inv(N)$ under $\mvint{1}{2}$. Also, let $\rho\; : \; \mathbb{Z} \cap [a,b] \to N$ denote a connection from $M_{1,2} \in \md{1,2}$ to $M_{1,2}' \in \md{1,2}$ under $\mv{1,2}$ where $M_{1,2}$ and $M_{1,2}'$ are relevant Morse sets. If $\rho$ satisfies both of the following:
\begin{enumerate}
    \item If $i \in \mathbb{Z} \cap [a,b]$ satisfies $\rho(i) \in M_1 \in \md{1}$, then $M_1 = \iota_1( M_{1,2} )$ or $M_1 = \iota_1( M'_{1,2} )$. 
    \item If $i \in \mathbb{Z} \cap [a,b]$ satisfies $\rho(i) \in M_2 \in \md{2}$, then $M_2 = \iota_2( M_{1,2} )$ or $M_2 = \iota_2( M'_{1,2} )$.
\end{enumerate}
then $\rho$ is a relevant connection. 
\end{definition}

We use the notions of relevant Morse sets to define the \emph{relevant Conley-Morse graph}. 

\begin{definition}[Relevant Conley-Morse Graph] 
Let $\md{1}$,$\md{2}$ denote Morse decompositions of isolated invariant sets $S_1$,$S_2$ in $N$ under $\mv{1}$,$\mv{2}$, and let $\md{1,2}$ denote the minimal Morse decomposition of $S_{1,2} = \inv(N)$ under $\mvint{1}{2}$. The \emph{relevant Conley-Morse graph} is the graph $G_{1,2}$ given by including a vertex in $V(G_{1,2})$ for each relevant Morse set in $\md{1,2}$, and including a directed edge from the vertex corresponding to $M_{1,2}$ to the vertex corresponding to $M_{1,2}'$ if there is a relevant connection from $M_{1,2}$ to $M_{1,2}'$.
\end{definition}
An example of these concepts can be seen in Figure \ref{fig:cmg-pers}. The top three graphs in Figure \ref{fig:cmg-pers} are the Conley-Morse graphs, omitting the Pointcar\'e polynomials, for the minimal Morse decompositions in Figure \ref{fig:spurious-pers}. If the top left and top right Conley-Morse graphs are Conley-Morse graphs under $\mv{1}$ and $\mv{2}$, then the top center Conley-Morse graph is a Conley-Morse graph under $\mvint{1}{2}$. The bottom left graph and bottom right graphs are the same as the top left and top right graphs, but the bottom center graph is the relevant Conley-Morse graph that is extracted from the top center graph. Each colored vertex represents the Morse set of the same color in Figure \ref{fig:spurious-pers}. Red arrows between the top center and top left or top right graphs indicate that a Morse set represented by a vertex in the top center graph is contained in a Morse set represented by a vertex in the top left or top right graphs. Hence, a vertex in the top center Conley-Morse graph is only relevant if there is a red arrow from it to a vertex in the top left and top right graphs. Relevant connections in the top center graph are shown in blue. There is a connection from the golden triangle to the blue periodic attractor by heading directly south in all three multivector fields in Figure \ref{fig:spurious-pers}. Hence, this is a relevant connection, and the edge from the golden vertex to the blue vertex is included in the relevant Conley-Morse graph in the bottom center. There is a path from the golden triangle to the magenta vertex in both the left and the center vector fields in Figure \ref{fig:spurious-pers}, but in the left multivector field, the path first passes through the turquoise Morse set. So while the path exists in both the left and middle multivector fields, it is not direct in the left multivector field, so it is not a relevant connection. Similarly, while a connection from the gray Morse set to the green Morse set exists in both the left and center multivector fields, neither the gray nor the green Morse sets are contained in a Morse set in the right multivector field, so this is not a relevant connection. Green edges represent paths which are in both the top center and top left graphs, but there is not corresponding path in the top right graph.

\begin{figure}[htbp]
\centering
  \includegraphics[width=110mm]{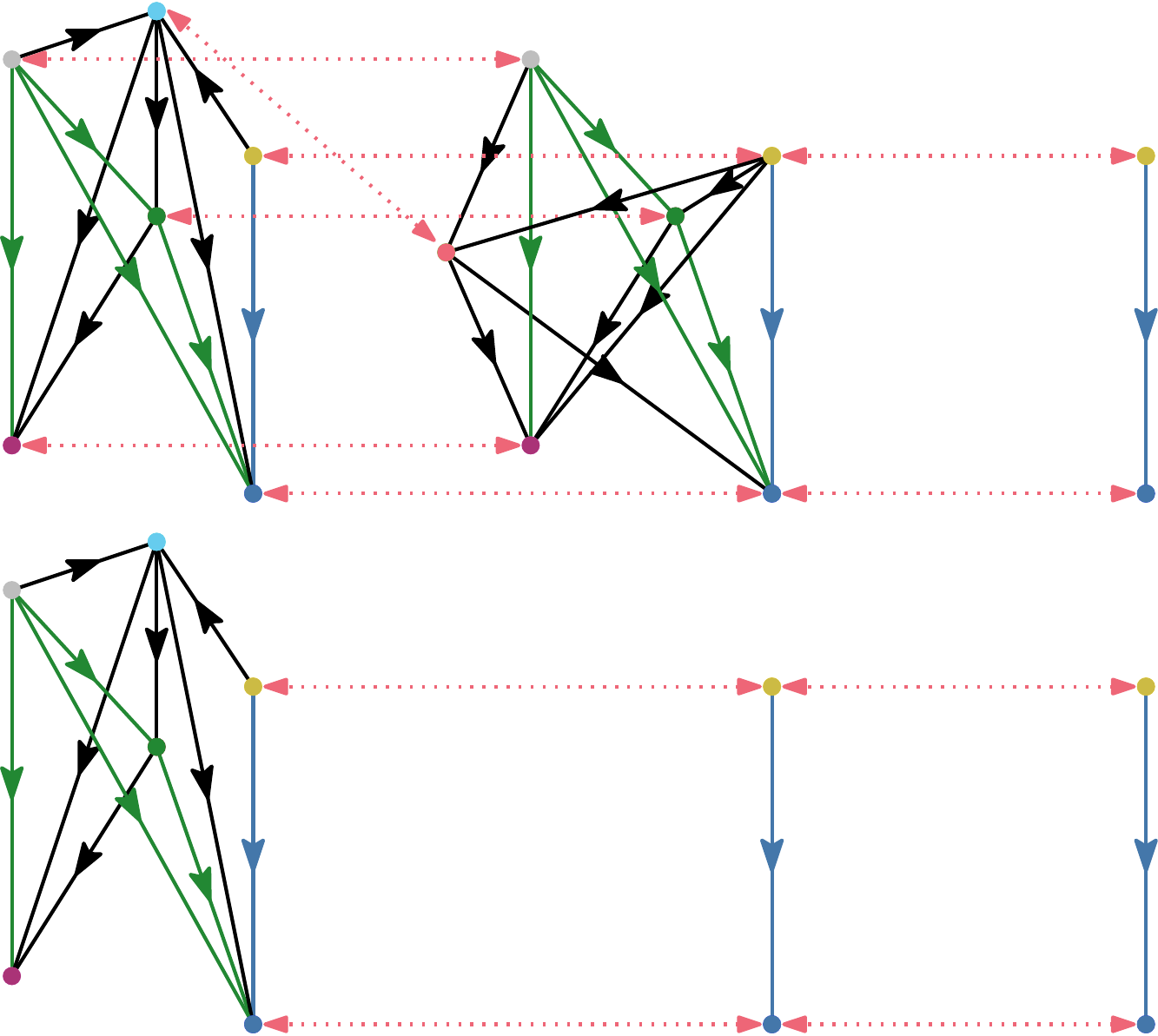}
\caption{On the top row, we depict the Conley-Morse graphs (absent the Pointcar\'e polynomials) for the Morse decompositions in Figure \ref{fig:spurious-pers}. The bottom row depicts the Conley-Morse graphs for the left and right Morse decompositions in Figure \ref{fig:spurious-pers}, but it instead includes the relevant Conley-Morse graph in the center.}
\label{fig:cmg-pers}
\end{figure} 

For the remainder of the paper, whenever we refer to a graph $G_{i,i+1}$ for some $i$, we are referring to the relevant Conley-Morse graph under $\mvint{i}{i+1}$. Note that we compute relevant Conley-Morse graphs by taking subgraphs of a Conley-Morse graph. Hence, $\iota_1$ and $\iota_2$ restrict to functions $\iota_1 \; : \; G_{1,2} \to G_1$ and $\iota_2 \; : \; G_{1,2} \to G_2$ (they are no longer partial).
\begin{proposition}
Let $G_1$ and $G_2$ denote the Conley-Morse graphs for Morse decompositions of the isolated invariant sets $S_1$,$S_2$ in $N$ under $\mv{1}$,$\mv{2}$, and let $G_{1,2}$ denote the relevant Conley-Morse graph for the minimal Morse decomposition of the maximal invariant set in $N$ under $\mvint{1}{2}$. If there is a directed edge from $M_{1,2}$ to $M_{1,2}'$ in $G_{1,2}$, then either $\iota_k( M_{1,2} ) = \iota_k( M'_{1,2})$ or there exists a directed edge from $\iota_k( M_{1,2} )$ to $\iota_k( M'_{1,2} )$ for $k \in \{1,2\}$. 
\label{prop:edgepreserving}
\end{proposition}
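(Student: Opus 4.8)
The plan is to unpack the definition of a relevant connection and use Proposition~\ref{prop:path-subset} to transport it to $G_1$ and $G_2$. Suppose there is a directed edge from $M_{1,2}$ to $M_{1,2}'$ in $G_{1,2}$. By definition of the relevant Conley-Morse graph, this means there is a relevant connection $\rho : \mathbb{Z} \cap [a,b] \to N$ from $M_{1,2}$ to $M_{1,2}'$ under $\mvint{1}{2}$. Fix $k \in \{1,2\}$ and set $M_k := \iota_k(M_{1,2})$, $M_k' := \iota_k(M_{1,2}')$; these are well-defined elements of $\md{k}$ since $M_{1,2}$, $M_{1,2}'$ are relevant. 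I want to show either $M_k = M_k'$ or there is a directed edge $M_k \to M_k'$ in $G_k$.

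First I would push $\rho$ forward: by Proposition~\ref{prop:path-subset}, since $\mvint{1}{2} \sqsubseteq \mv{k}$, the sequence $\rho$ is also a path in $N$ under $\mv{k}$, running from a simplex of $M_{1,2} \subseteq M_k$ to a simplex of $M_{1,2}' \subseteq M_k'$. Now invoke condition~$k$ in the definition of relevant connection: every $\rho(i)$ that lies in some Morse set of $\md{k}$ must lie in $M_k$ or $M_k'$. The endpoints $\rho(a) \in M_k$ and $\rho(b) \in M_k'$ in particular. If $M_k = M_k'$ we are done, so assume $M_k \neq M_k'$. Then $\rho$ is a path under $\mv{k}$ from $M_k$ to $M_k'$ whose intermediate simplices, \emph{when they meet $\md{k}$ at all}, meet only $M_k$ or $M_k'$. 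This is precisely the data of a connection from $M_k$ to $M_k'$ in the sense used to define edges of the Conley-Morse graph $G_k$ (a solution/path joining the two Morse sets that does not detour through a third Morse set of $\md{k}$), which by the poset structure of a Morse decomposition forces a directed edge $M_k \to M_k'$ in $G_k$. Hence the conclusion holds for this $k$; since $k \in \{1,2\}$ was arbitrary, it holds for both.

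The main obstacle I anticipate is the last step: carefully matching "$\rho$ is a path under $\mv{k}$ touching only $M_k$ and $M_k'$ among the Morse sets of $\md{k}$" with the precise definition of "connection from $M \in \md{}$ to $M' \in \md{}$" that underlies edges of $G_k$. One must check that such a path indeed yields an essential solution realizing a connection (it may need to be extended bi-infinitely by staying inside $M_k$ in backward time and inside $M_k'$ in forward time, which is possible because both are invariant sets), and then apply property~2 of Definition~\ref{def:morse-decomp} to get $\alpha \subseteq M_k$, $\omega \subseteq M_k'$ with $M_k' \le M_k$ in the poset, i.e. the required edge. A subtle point to verify is that $\rho(a) \in M_k$ and $\rho(b) \in M_k'$ genuinely hold — this follows because $M_{1,2} \subseteq M_k$ and $M_{1,2}' \subseteq M_k'$ by definition of $\iota_k$, and $\rho$ starts in $M_{1,2}$ and ends in $M_{1,2}'$ as a connection in $G_{1,2}$. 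Everything else is a direct translation through Propositions~\ref{prop:dynamics-generator-subset} and~\ref{prop:path-subset}.
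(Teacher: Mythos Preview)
Your proposal is correct and follows essentially the same route as the paper's proof: unpack the relevant connection, push it to $\mv{k}$ via Proposition~\ref{prop:path-subset}, and use the defining conditions on relevant connections to conclude that the resulting path only meets the two target Morse sets in $\md{k}$, hence yields the desired edge (or equality). The paper's proof is terser and simply asserts that ``$\rho$ is a direct connection from $\iota_1(M_{1,2})$ to $\iota_1(M'_{1,2})$'' implies the edge, without the bi-infinite extension you sketch; your extra care there is warranted but not a different strategy.
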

\begin{proof}
If there exists a directed edge from $M_{1,2}$ to $M'_{1,2}$, then there exists a relevant connection from $M_{1,2}$ to $M'_{1,2}$ under $\mvint{1}{2}$. We denote this connections as $\rho \; : \; [0,n] \cap \mathbb{Z} \to N$. By Proposition \ref{prop:path-subset}, $\rho$ must also be a path under $\mv{1}$ and $\mv{2}$. By the definition of relevant connection, if $\rho(i) \in M \in \md{1}$, then $\iota_1( M_{1,2} ) = M$ or $\iota_1(M'_{1,2}) = M$. Hence, either $\iota_1( M_{1,2} ) = \iota_1(M'_{1,2})$, or as $\rho$ is a direct connection from $\iota_1( M_{1,2} )$ to $\iota_1(M'_{1,2})$, there exists a directed edge from the former to the later. The same argument holds for $\iota_2$. 
\end{proof}

Now, we have two Conley-Morse graphs $G_1$,$G_2$ for isolated invariant sets in $N$ under $\mv{1}$,$\mv{2}$ and the relevant Conley-Morse graph $G_{1,2}$ for the maximal invariant set in $N$ under $\mvint{1}{2}$. We are interested in how the structure of these graphs ``persist.'' To do this, we will treat the graphs as simplicial complexes by ignoring the orientation on edges. The maps $\iota_1$,$\iota_2$ induce simplicial maps on these complexes.
\begin{proposition}
Let $f_1 \; : \; G_{1,2} \to G_1$ and $f_2 \; : \; G_{1,2} \to G_2$ denote the maps induced by $\iota_1$, $\iota_2$ where $f_1(\{u,v\}) = \{\iota_1(u), \iota_1(v)\}$ and $f_2(\{u,v\}) = \{\iota_2(u), \iota_2(v)\}$. The maps $f_1$ and $f_2$ are simplicial maps.
\label{prop:simpmap}
\end{proposition}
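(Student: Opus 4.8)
The plan is to show that $f_1$ and $f_2$ send simplices of $G_{1,2}$ to simplices of $G_1$ and $G_2$ respectively, where "simplices" means vertices and edges after we forget edge orientations. Since a simplicial map is determined by its action on vertices and is required only to carry each simplex onto a simplex (allowing degeneration, i.e. an edge may map to a single vertex), the verification splits into two cases: $0$-simplices (vertices) and $1$-simplices (edges). Throughout I work with $f_1$; the argument for $f_2$ is identical with $\iota_1$ replaced by $\iota_2$.

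First I would handle vertices. A vertex of $G_{1,2}$ is a relevant Morse set $M_{1,2} \in \md{1,2}$, and by the definition of relevance there is an $M_1 \in \md{1}$ with $M_{1,2} \subseteq M_1$; moreover this $M_1$ is unique because the Morse sets of $\md{1}$ are mutually disjoint (Definition~\ref{def:morse-decomp}) and $M_{1,2}\neq\emptyset$. Hence $\iota_1$ is a genuine (total, single-valued) function on $V(G_{1,2})$ — as already noted in the paragraph preceding the proposition, the partial functions $\iota_1,\iota_2$ restrict to total functions on the relevant graph — and $f_1$ sends each $0$-simplex $\{M_{1,2}\}$ to the $0$-simplex $\{\iota_1(M_{1,2})\}$ of $G_1$. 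Next I would handle edges: let $\{u,v\}$ be a $1$-simplex of $G_{1,2}$, i.e. (ignoring orientation) there is a directed edge between $u$ and $v$ in the relevant Conley-Morse graph. By Proposition~\ref{prop:edgepreserving}, either $\iota_1(u) = \iota_1(v)$, in which case $f_1(\{u,v\}) = \{\iota_1(u)\}$ is a $0$-simplex of $G_1$, or there is a directed edge from $\iota_1(u)$ to $\iota_1(v)$ (or the reverse, but orientation is irrelevant here), in which case $f_1(\{u,v\}) = \{\iota_1(u),\iota_1(v)\}$ is a $1$-simplex of $G_1$. In both cases the image of a simplex is a simplex, so $f_1$ is simplicial.

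The only genuine content here is Proposition~\ref{prop:edgepreserving}, which has already been established, so there is no serious obstacle; the proof is essentially a bookkeeping argument that repackages that proposition together with well-definedness of $\iota_1$ and $\iota_2$ into the language of simplicial maps. The one subtlety I would be careful to state explicitly is that we are treating each graph as a $1$-dimensional simplicial complex by forgetting orientations, so that "simplicial map" permits edges to collapse onto vertices — without that convention the case $\iota_1(u)=\iota_1(v)$ would be problematic. I would also remark that $f_1$ and $f_2$ are the simplicial maps underlying the zigzag $G_1 \xleftarrow{f_1} G_{1,2} \xrightarrow{f_2} G_2$ that will be used to define the graph filtration, motivating why simpliciality is exactly the property needed.
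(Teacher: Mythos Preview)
Your proposal is correct and follows essentially the same approach as the paper: check vertices go to vertices, then check that an edge $\{u,v\}$ maps either to a vertex (when $\iota_1(u)=\iota_1(v)$) or to an edge (when there is a connection between the images). The only cosmetic difference is that you invoke Proposition~\ref{prop:edgepreserving} by name, whereas the paper re-derives its conclusion inline from the definition of a relevant connection; your added care about well-definedness of $\iota_1$ and the orientation-forgetting convention is a welcome bit of extra precision but not a different argument.
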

\begin{proof}
The maps $f_1$ and $f_2$ bring vertices to vertices, so it is sufficient to show that the image of an edge is either an edge or a vertex. If $\sigma \in G_{1,2}$ is an edge, then it corresponds to an edge $e = (u,v) \in G_{1,2}$. Such edges correspond to relevant paths from the vertex $u$ to the vertex $v$. Thus, either $\iota_1( u) = \iota_1(v)$, which implies that $f_1( \{u,v\} ) = \iota_1(u)$, or $\iota_1(u) \neq \iota_1(v)$, which implies that there is a connection from $\iota_1(u)$ to $\iota_1(v)$. Hence, there exists an edge $\{\iota_1(u), \iota_1(v) \} \in G_1$. Thus, $f_1$ is both a map and a simplicial map. The argument for $f_2$ follows analogously.
\end{proof}

Given a sequence of $n$ Conley-Morse graphs and $n-1$ relevant Conley-Morse graphs, we can use Proposition \ref{prop:simpmap} to obtain a sequence of complexes connected by simplicial maps. We show this sequence in Equation \ref{eqn:zzfilt}:
\begin{equation}
    G_1 \leftarrow G_{1,2} \rightarrow G_2 \leftarrow G_{2,3} \rightarrow \cdots \leftarrow G_{n-1,n} \rightarrow G_n.
    \label{eqn:zzfilt}
\end{equation}
Hence, we have a zigzag filtration that captures the changing structure of a sequence of Conley-Morse graphs. We can compute the barcode for zigzag filtrations where the maps are simplicial by using an algorithm in \cite{DFW2014}.

\cancel{To do this, we use the so called \emph{Dowker sink complex} \cite{Dow1952}.
\begin{definition}[Dowker Sink Complex]
Let $G$ denote a directed graph. The \emph{Dowker sink complex} $K(G)$ is given by all subsets $\sigma \subseteq V(G)$ where there exists a $u \in V(G)$ such that for all $v \in \sigma$, either $v = u$ or there is a directed edge from $v$ to $u$. 
\end{definition}

An example of how we use the Dowker sink complex is in Figure \ref{fig:conley-dowker}. We use the notation $\ds(G)$ to denote the Dowker sink complex of a directed graph. If $G_1$ and $G_2$ are directed graphs, and $f \; : \; V(G_1) \to V(G_2)$ has the property that if there is a directed edge from $v_1$ to $v_2$, then either there is a directed edge from $f(v_1)$ to $f(v_2)$ or $f(v_1) = f(v_2)$, then we say that the map $f$ is \emph{edge preserving}. By Proposition \ref{prop:edgepreserving}, $\iota_1$ and $\iota_2$ are both edge preserving. 
\begin{figure}
    \centering
    \includegraphics[height=40mm]{fig/conley-dowker.png}
    \caption{Conley-Morse graphs corresponding to Figure \ref{multivec-example-fig} (top); the Dowker sink complexes given by each Conley-Morse graph (bottom). Vertex colors match the Morse sets they represent.}
    \label{fig:conley-dowker}
\end{figure}
\begin{proposition}
Let $G_1$ and $G_2$ denote directed graphs, and let $f \; : \; V(G_1) \to V(G_2)$ denote an edge preserving map. The induced map $f' \; : \; \ds(G_1) \to \ds(G_2)$ given by $\{v_1, v_2, \ldots, v_n\} \mapsto \{ f(v_1), f(v_2), \ldots, f(v_n) \}$ is simplicial.
\label{prop:simpmap}
\end{proposition}
\begin{proof}
It is sufficient to show that $f'$ maps simplices to simplices. Consider a simplex $\sigma = \{v_1, \ldots, v_n\}$, $\sigma \in \ds(G_1)$. By definition, there exists a $u \in V(G_1)$ such that for all $v_i$, there is either a directed edge $(v_i,u) \in E(G_1)$ or $v_i = u_i$. By assumption, $f$ is edge preserving, so for all $v_i$, there is either an edge $(f(v_i),f(u)) \in E(G_2)$ or $f(v_i) = f(u)$. Hence, it follows that $f(\sigma) \in \ds(G_2)$.   
\end{proof}}

\cancel{ However, this approach is not particularly unique. 
\begin{remark}
Proposition \ref{prop:simpmap} also holds for other directed graph -induced complexes, including the directed flag complex \cite{DHL2016}, the Dowker source complex, and the nerve of the partial order on the vertex set. 
\end{remark}
We leave the investigation of the performance of different complexes to future work.}
\section{Barcodes for Conley-Morse Graphs}
\label{sec:barcode}

In Section \ref{sec:persist} we showed how to find Conley-Morse filtrations, which represent the changing Conley indices at the vertices of a Conley-Morse graph. Similarly, in Section \ref{sec:graphstructure}, we showed how to extract a graph filtration, which represents the changing graph structure of the Conley-Morse graph. By taking all barcodes for these filtrations together, we can straightforwardly obtain a ``barcode'' for a sequence of Conley-Morse graphs. However, the barcode obtained from Conely-Morse filtrations may contain a significant amount of redundancy. Recall that we compute the changing Conley index by considering all possible Conley-Morse filtrations given by maximal sequences $\{ (P_i, E_i) \}_{i = j}^k$ where $(P_i, E_i)$ is an index pair in $N$ for a Morse set in the $i$th Conley-Morse graph and $(P_i \setminus E_i) \cap (P_{i+1} \setminus E_{i+1}) \neq \emptyset$. As a result of this construction, a subfiltration given by the sequence $\{ (P_i, E_i) \}_{i = j}^k$ may occur in several filtrations.

Duplication of subfiltrations can lead to a duplication of bars. For an example, in Figure \ref{compl-fig}, consider the middle and bottom filtrations. The middle filtration shows a repelling fixed point turning into a periodic repeller which in turn merges with a periodic attractor to become a semistable limit cycle. In the bottom filtration, a repelling fixed point becomes an attracting fixed point. The $2$-dimensional barcode for the middle filtration is given by a single bar, which represents the $2$-dimensional homology generator for the repelling fixed point and the periodic repeller. In contrast, the $2$-dimensional bar code for the bottom filtration represents only the lifetime of the repelling fixed point. This is redundant: the  $2$-dimensional bar for the middle filtration already captures the homology generator for the repelling fixed point in the bottom filtration. One bar is a subset of the other, and they capture the same generator.

\begin{figure}[htbp]
\centering
\begin{tabular}{ccc}
  \includegraphics[height=32mm]{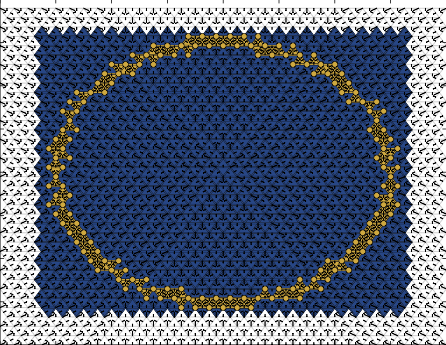}&
  \includegraphics[height=32mm]{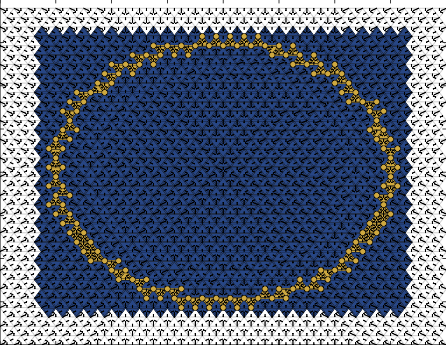}&
  \includegraphics[height=32mm]{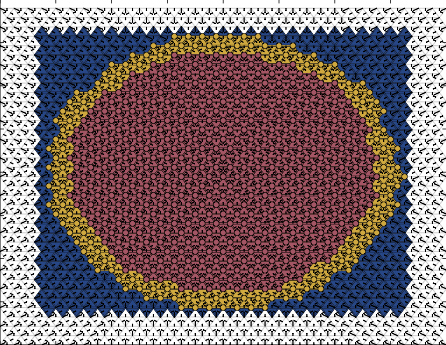}\\
  \multicolumn{3} {l} {
  \begin{tikzpicture}[outer sep = 0, inner sep = 0]
  \draw[fill=white,outer sep=0, inner sep=0] (0.0,-0.3) -- (9.08,-0.3) -- (9.08,0.1) -- (0.0,0.1) -- cycle;
  \node[align=left] at (1.3,-0.1) {Dimension: 0};
  \draw[fill=light-gray,outer sep=0, inner sep=0] (0.0,-0.85) -- (9.08,-0.85) -- (9.08,-0.45) -- (0.0,-0.45) -- cycle;
  \node[align=left] at (1.3,-0.65) {Dimension: 1};
  \end{tikzpicture} }\\
  \includegraphics[height=32mm]{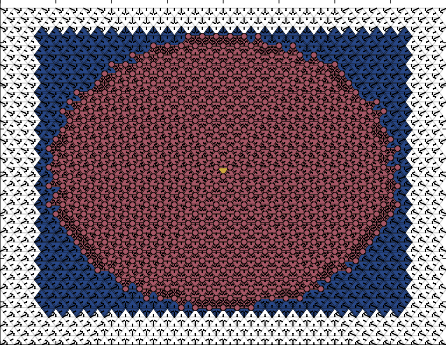}&
  \includegraphics[height=32mm]{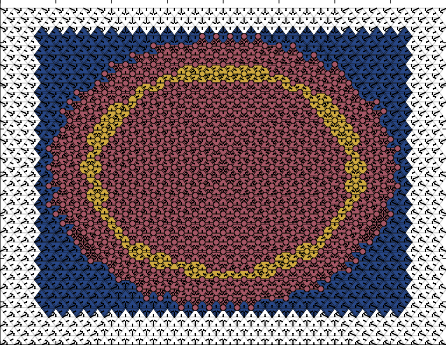}&
  \includegraphics[height=32mm]{fig/high-contrast-negative-repeller-220.png}\\
  \multicolumn{3} {c} {
  \begin{tikzpicture}[outer sep = 0, inner sep = 0]
  \draw[fill=light-gray,outer sep=0, inner sep=0] (0.0,-0.3) -- (4.84,-0.3) -- (4.84,0.1) -- (0.0,0.1) -- cycle;
  \node[align=left] at (1.3,-0.1) {Dimension: 1};
  \end{tikzpicture} }\\
  \multicolumn{3} {l} {
  \begin{tikzpicture}[outer sep = 0, inner sep = 0]
  \draw[fill=dark-gray,outer sep=0, inner sep=0] (0.0,-0.3) -- (9.08,-0.3) -- (9.08,0.1) -- (0.0,0.1) -- cycle;
  \node[align=left,color=white] at (1.3,-0.1) {Dimension: 2};
  \end{tikzpicture} }\\
  \includegraphics[height=32mm]{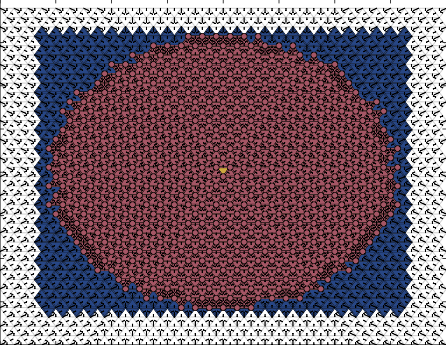} &
  \includegraphics[height=32mm]{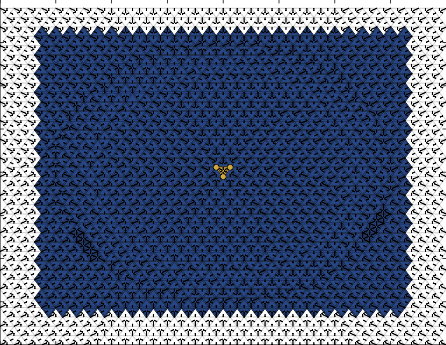}&
  \includegraphics[height=32mm]{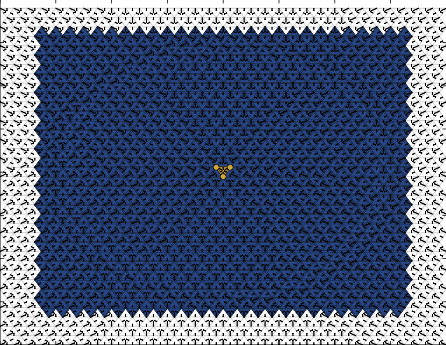}\\
  \multicolumn{3} {l} {
  \begin{tikzpicture}[outer sep = 0, inner sep = 0]
  \draw[fill=dark-gray,outer sep=0, inner sep=0] (0.0,-0.3) -- (4.14,-0.3) -- (4.14,0.1) -- (0.0,0.1) -- cycle;
  \node[align=left,color=white] at (1.3,-0.1) {Dimension: 2};
  \end{tikzpicture} }\\
  \multicolumn{3} {r} {
  \begin{tikzpicture}[outer sep = 0, inner sep = 0]
  \draw[fill=white,outer sep=0, inner sep=0] (0.0,-0.3) -- (9.07,-0.3) -- (9.07,0.1) -- (0.0,0.1) -- cycle;
  \node[align=left] at (1.3,-0.1) {Dimension: 0};
  \end{tikzpicture} }\\  
\end{tabular}
\caption{All three maximal sequences extracted from the changing Conley-Morse graph for the Morse decompositions in Figure \ref{fig:changing-dynamical-system}. The isolating set is given by yellow, red, and blue simplices, while if $(P,E)$ is an index pair, the simplices in $P\setminus E$ are in yellow and the simplices in $E$ are in red. The top three images show a periodic attractor that becomes a semistable limit cycle. The middle three images show a repelling fixed point that becomes a periodic repeller and then becomes a semistable limit cycle, and the bottom three images show a repelling fixed point that transitions into an attracting fixed point. Beneath each maximal sequence, we include the barcode from the zigzag filtration that we get by applying Theorem \ref{thm:interindexpair} to the maximal sequence to obtain a Conley-Morse filtration. White bars are $0$-dimensional, light gray bars are $1$-dimensional, and dark gray bars are $2$-dimensional. }
\label{compl-fig}
\end{figure} 

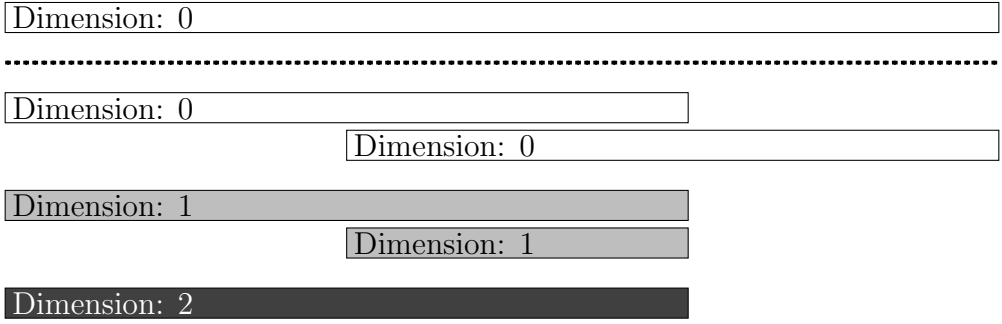
\begin{figure}
\centering
\begin{tikzpicture}[outer sep = 0, inner sep = 0]
  \draw[fill=white,outer sep=0, inner sep=0] (0.0,0.9) -- (13.21,0.9) -- (13.21,1.3) -- (0.0,1.3) -- cycle;
  \node[align=left] at (1.3,1.1) {Dimension: 0};
  \draw[line width = .5mm, dotted](0, .5) -- (13.21,.5) -- cycle;
  \draw[fill=white,outer sep=0, inner sep=0] (0.0,-0.3) -- (9.08,-0.3) -- (9.08,0.1) -- (0.0,0.1) -- cycle;
  \node[align=left] at (1.3,-0.1) {Dimension: 0};
  \draw[fill=white,outer sep=0, inner sep=0] (4.54,-0.8) -- (13.21,-0.8) -- (13.21,-0.4) -- (4.54,-0.4) -- cycle;
  \node[align=left] at (5.84,-0.6) {Dimension: 0};
  \draw[fill=light-gray,outer sep=0, inner sep=0] (0.0,-1.6) -- (9.08,-1.6) -- (9.08,-1.2) -- (0.0,-1.2) -- cycle;
  \node[align=left] at (1.3,-1.4) {Dimension: 1};
  \draw[fill=light-gray,outer sep=0, inner sep=0] (4.54,-2.1) -- (9.08,-2.1) -- (9.08,-1.7) -- (4.54,-1.7) -- cycle;
  \node[align=left] at (5.84,-1.9) {Dimension: 1};
  \draw[fill=dark-gray,outer sep=0, inner sep=0] (0.0,-2.9) -- (9.08,-2.9) -- (9.08,-2.5) -- (0.0,-2.5) -- cycle;
  \node[align=left,color=white] at (1.3,-2.7) {Dimension: 2};
\end{tikzpicture}
\caption{The barcode corresponding to the sequence of combinatorial dynamical systems in Figure \ref{fig:changing-dynamical-system}. The white bar above the dotted line is $0$-dimensional, and it represents the connected component in the Conley-Morse graph. The bars below the dotted line are obtained by extracting the barcodes from the Conley-Morse filtrations in Figure \ref{compl-fig} and removing redundant bars.}
\label{fig:barcode}
\end{figure}

In devising a barcode that captures a changing Conley-Morse graph, it is best that we eliminate these redundant bars. In this section, we use the notation $\mathcal{I}_{a,b}$ to denote an interval $I$ such that $\cl(I) = [a,b]$. That is, $\mathcal{I}_{a,b}$ can denote any of $[a,b]$, $[a,b)$, $(a,b]$, or $(a,b)$. We also consider \emph{subfiltrations}. If $\mathcal{F}$ denotes the zigzag filtration $(P_1, E_1) \supseteq (P_1 \cap P_2, E_1 \cap E_2 ) \subseteq (P_2, E_2) \supseteq \ldots$, then we use the notation $\mathcal{F}_{a,b}$ to denote the subfiltration $(P_a, E_a) \supseteq (P_a \cap P_{a+1}, E_a \cap E_{a+1} ) \subseteq (P_{a+1}, E_{a+1}) \supseteq \ldots \subseteq (P_b, E_b)$. 
\begin{definition}[Redundant]
Let $\mathcal{Z}$ denote the set of all maximal relative zigzag filtrations for a sequence of Conley-Morse graphs, and let $\mathcal{I}_{a,b}$ denote a $k$-dimensional bar extracted from $\mathcal{F} \in \mathcal{Z}$. If there exists a filtration $\mathcal{G} \in \mathcal{Z}$ where $\mathcal{G}_{a,b} = \mathcal{F}_{a,b}$, then the bar $\mathcal{I}_{a,b}$ is \emph{redundant}. 
\end{definition}
\begin{proposition}
Let $\mathcal{Z}$ denote the set of maximal relative zigzag filtrations for a sequence of Conley-Morse graphs. If $\mathcal{I}_{a,b}$ is a redundant $k$-dimensional bar in the barcode for $\mathcal{F} \in \mathcal{Z}$, then there exists a filtration $\mathcal{G}\in \mathcal{Z}$ such that the $k$-dimensional barcode for $\mathcal{G}$ contains a bar $\mathcal{I}_{c,d}$ where $\mathcal{I}_{a,b} \subseteq \mathcal{I}_{c,d}$.
\end{proposition}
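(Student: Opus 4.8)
The plan is to exploit the fact that $\mathcal{I}_{a,b}$ being redundant means there is a second maximal filtration $\mathcal{G} \in \mathcal{Z}$ whose restriction to indices $[a,b]$ agrees with that of $\mathcal{F}$, i.e. $\mathcal{G}_{a,b} = \mathcal{F}_{a,b}$. The zigzag barcode is an invariant of the zigzag module up to isomorphism, and the module associated to a subfiltration on indices $[a,b]$ is literally a sub-quotient (in fact just the restriction) of the module for the full filtration. So the class supporting the bar $\mathcal{I}_{a,b}$ in $\mathcal{F}$ lives entirely inside $\mathcal{F}_{a,b} = \mathcal{G}_{a,b}$, hence it is also ``visible'' in $\mathcal{G}$. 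The only subtlety is that when we re-embed $\mathcal{F}_{a,b}$ as a subfiltration of the longer filtration $\mathcal{G}$, a bar that was born/died at an endpoint of $[a,b]$ in $\mathcal{F}$ may extend further in $\mathcal{G}$ (the class might be in the image of an earlier inclusion, or survive to a later one, or be born/killed earlier/later depending on the directions of the arrows at $a-1$ and $b+1$). That is exactly why the conclusion only claims $\mathcal{I}_{a,b} \subseteq \mathcal{I}_{c,d}$ rather than equality.

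The key steps, in order:

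\emph{Step 1.} Fix notation: let $V(\mathcal{F})$ and $V(\mathcal{G})$ denote the zigzag persistence modules (over the finite coefficient field) obtained by applying $H_k$ to the two maximal filtrations, and let $V(\mathcal{F})_{[a,b]}$, $V(\mathcal{G})_{[a,b]}$ be the restricted modules on the index subinterval $[a,b]$ (these are honest zigzag modules on that smaller index set). By the normal-form / interval-decomposition theorem for zigzag modules, the bar $\mathcal{I}_{a,b}$ in the barcode of $V(\mathcal{F})$ corresponds to an interval summand $I_{a,b}$ of $V(\mathcal{F})$ that is supported exactly on $[a,b]$ (here I use that $\cl(\mathcal{I}_{a,b}) = [a,b]$, so the support of the summand, as a set of indices, is all of $a,a+1,\dots,b$).

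\emph{Step 2.} Observe that restriction to $[a,b]$ is an exact functor on zigzag modules: $V(\mathcal{F})_{[a,b]} \cong V(\mathcal{F}_{a,b})$, and likewise for $\mathcal{G}$. Since $\mathcal{F}_{a,b} = \mathcal{G}_{a,b}$ as filtrations of pairs, applying $H_k$ gives an isomorphism of restricted modules $V(\mathcal{F})_{[a,b]} \cong V(\mathcal{G})_{[a,b]}$. In particular the summand $I_{a,b}$ of $V(\mathcal{F})$, which is supported on all of $[a,b]$ and hence unaffected by the restriction, appears as a summand (supported on all of $[a,b]$) of $V(\mathcal{G})_{[a,b]}$ as well.

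\emph{Step 3.} Now lift this summand from the restricted module $V(\mathcal{G})_{[a,b]}$ back to the full module $V(\mathcal{G})$. A standard fact about interval decompositions of zigzag modules is that every interval summand of a restriction $V(\mathcal{G})_{[a,b]}$ is the restriction of an interval summand of $V(\mathcal{G})$: concretely, decompose $V(\mathcal{G}) = \bigoplus_\alpha I_{\mathcal{J}_\alpha}$ into intervals, restrict termwise, and note the restricted pieces $I_{\mathcal{J}_\alpha \cap [a,b]}$ (those that are nonzero) give the decomposition of $V(\mathcal{G})_{[a,b]}$; by uniqueness (Krull--Schmidt / Azumaya), the summand of $V(\mathcal{G})_{[a,b]}$ supported on all of $[a,b]$ that matches $I_{a,b}$ must be $I_{\mathcal{J}_\alpha \cap [a,b]}$ for some $\alpha$ with $\mathcal{J}_\alpha \cap [a,b]$ having closure $[a,b]$ — i.e. $\mathcal{J}_\alpha$ is an interval $\mathcal{I}_{c,d}$ of $\mathcal{G}$'s barcode with $[a,b] \subseteq [c,d]$, whence $\mathcal{I}_{a,b} \subseteq \mathcal{I}_{c,d}$. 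This $\mathcal{I}_{c,d}$ is the desired bar in the $k$-dimensional barcode of $\mathcal{G}$.

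The main obstacle I anticipate is Step 3 — making precise the claim that an interval summand of a restricted zigzag module extends to an interval summand of the ambient module, and being careful about the half-open/closed endpoints so that ``$\mathcal{I}_{a,b} \subseteq \mathcal{I}_{c,d}$'' is stated correctly at the boundary (e.g. whether $a = c$ forces the left endpoint to be closed in both, which depends on the arrow direction at index $a-1$ in $\mathcal{G}$). The cleanest way to handle this is to work purely at the level of interval-decomposition bookkeeping: decompose $V(\mathcal{G})$ into intervals once and for all, intersect each interval with $[a,b]$, and match against the decomposition of $V(\mathcal{F})_{[a,b]} \cong V(\mathcal{G})_{[a,b]}$; uniqueness of the decomposition then does all the work and the containment of intervals is immediate from the containment $[a,b] \subseteq \mathcal{J}_\alpha$ at the index-set level, with the endpoint decorations inherited from $\mathcal{G}$'s barcode.
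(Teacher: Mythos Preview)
Your proposal is correct and follows essentially the same approach as the paper: use the $\mathcal{G}$ guaranteed by the definition of redundancy, restrict both zigzag modules to $[a,b]$, invoke uniqueness of the interval decomposition to match bars, and lift the matched bar back to a (possibly longer) interval summand of $V(\mathcal{G})$. The paper compresses all of this into three sentences, whereas you spell out the Krull--Schmidt/restriction bookkeeping in Step~3 more carefully; but the strategy is identical.
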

\begin{proof}
By restricting the $k$-dimensional barcode for $\mathcal{G}$ and $\mathcal{F}$ to the interval $[a,b]$, we get two sets of bars for the zigzag filtration $\mathcal{F}_{a,b} = \mathcal{G}_{a,b}$. The barcode for $\mathcal{F}_{a,b} = \mathcal{G}_{a,b}$ is unique (see \cite{zigzag}). Hence, there must be a bar $\mathcal{I}_{c,d}$ in the barcode for $\mathcal{G}$ where $\mathcal{I}_{c,d} \cap [a,b] = \mathcal{I}_{a,b}$.
\end{proof}

If $\mathcal{Z}$ is the set of zigzag filtrations for the Conley-Morse graph, and $\mathcal{B}$ denotes the set of bars extracted from filtrations in $\mathcal{Z}$, then redundancy gives a partial order on $\mathcal{B}$. In particular, $\mathcal{I}_{a,b} \leq \mathcal{I}_{c,d}$ if $\mathcal{I}_{a,b} \subseteq \mathcal{I}_{c,d}$. Together with the bars extracted from the graph structure filtration in Section \ref{sec:graphstructure}, the set of maximal bars under $\leq$ are taken to be the barcode for the changing Conley-Morse graph. 

\begin{algorithm2e}
\SetAlgoLined
\KwIn{ List of filtrations $\mathcal{F}_1, \ldots, \mathcal{F}_n$, and the corresponding set of $k$-dimensional barcodes $\mathcal{B}(\mathcal{F}_1), \ldots, \mathcal{B}(\mathcal{F}_n)$} 
\KwOut{ Set of bars $\mathcal{B}$ }
    
    $\mathcal{B} \gets \texttt{new set}()$
    
    \For{$i \in 1, \ldots, n$ }
    {
        $\mathcal{F} \gets \mathcal{F}_i$
        
        \For{$\mathcal{I}_{a,b} \in \mathcal{B}(\mathcal{F}_i)$}
        {
            $redundant \gets \texttt{False}$
        
            \For{$j \in 1 \ldots n$}
            {
                \If{$j \neq i$}
                {
                    
                    $\mathcal{G} \gets \mathcal{G}_j$
                
                    \If{$!\texttt{forbidden}(\mathcal{G}_{a,b}) \; \texttt{and} \;
                    \mathcal{F}_{a,b} = \mathcal{G}_{a,b}$}
                    {
                        $redundant \gets \texttt{True}$
                        
                        $\texttt{forbidden}( \mathcal{F}_{a,b}) \gets \texttt{True}$
                    }
                }
            }
            
            \uIf{ $redundant = \texttt{False}$ }
            {
                $\texttt{add}(\mathcal{B}, \mathcal{I}_{a,b})$
            }\Else{
                $\mathcal{B}(\mathcal{F}_i) \gets \mathcal{B}(\mathcal{F}_i) \setminus \{ \mathcal{I}_{a,b} \}$
            }
        }
    }
    
    \Return{$\mathcal{B}$}
    
 \caption{$\texttt{EliminateRedundancies}( \{ \mathcal{F}_i \}_{i = 1}^n, \{\mathcal{B}(\mathcal{F}_i)\}_{i=1}^n)$ }
 \label{alg:redunds}
\end{algorithm2e}

\begin{proposition}
Algorithm \ref{alg:redunds} outputs the set of maximal bars.
\end{proposition}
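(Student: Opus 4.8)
The plan is to show that Algorithm~\ref{alg:redunds} adds a bar $\mathcal{I}_{a,b}$ to $\mathcal{B}$ if and only if $\mathcal{I}_{a,b}$ is maximal under the partial order $\leq$ induced by redundancy (i.e.\ by interval containment among bars of the same dimension extracted from filtrations in $\mathcal{Z}$). I would split this into a soundness direction (every bar output is maximal) and a completeness direction (every maximal bar is output), both proceeding by careful inspection of the control flow, together with an auxiliary invariant on the \texttt{forbidden} flag.

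First I would establish the soundness direction. Suppose $\mathcal{I}_{a,b} \in \mathcal{B}(\mathcal{F}_i)$ is \emph{not} maximal; then there is a strictly larger bar $\mathcal{I}_{c,d}$ with $\mathcal{I}_{a,b} \subsetneq \mathcal{I}_{c,d}$ extracted from some filtration $\mathcal{G} = \mathcal{F}_j$. The key point is that $\mathcal{I}_{a,b} \subseteq \mathcal{I}_{c,d}$ forces $[a,b] \subseteq [c,d]$, so the restriction of $\mathcal{G}$ to the index range $[a,b]$ is defined and, because the (unique) zigzag barcode of a subfiltration is the restriction of the barcode of the whole filtration, $\mathcal{G}$ restricted to $[a,b]$ carries a $k$-dimensional bar equal to $\mathcal{I}_{a,b}$; in particular $\mathcal{F}_{a,b} = \mathcal{G}_{a,b}$ as zigzag filtrations. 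Hence the inner test $\mathcal{F}_{a,b} = \mathcal{G}_{a,b}$ succeeds for this $j$ (assuming the \texttt{forbidden} guard does not block it), $redundant$ is set to \texttt{True}, and $\mathcal{I}_{a,b}$ is removed rather than added. The subtlety here is the \texttt{forbidden} guard: I would argue by an induction on $i$ that \texttt{forbidden}$(\mathcal{G}_{a,b})$ is set to \texttt{True} only after the unique ``surviving'' copy of the subfiltration has been processed, so that at most one representative of each repeated subfiltration is ever marked, and it never blocks the detection of a genuinely dominated bar — this is exactly the mechanism that prevents mutually-redundant equal-length bars from \emph{both} being discarded.

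Next I would establish completeness: every maximal bar is output. Let $\mathcal{I}_{a,b}$ be maximal and let $\mathcal{F}_i$ be a filtration whose barcode contains it. When the algorithm processes $\mathcal{I}_{a,b}$, for it to be flagged redundant there must be some $j \neq i$ with $\mathcal{F}_{a,b} = \mathcal{G}_{a,b}$ and with the \texttt{forbidden} guard open. By the definition of \emph{redundant}, $\mathcal{F}_{a,b} = \mathcal{G}_{a,b}$ means there is a bar $\mathcal{I}_{c,d}$ in $\mathcal{G}$ with $\mathcal{I}_{c,d} \cap [a,b] = \mathcal{I}_{a,b}$, hence $\mathcal{I}_{a,b} \subseteq \mathcal{I}_{c,d}$; maximality of $\mathcal{I}_{a,b}$ then forces $\mathcal{I}_{c,d} = \mathcal{I}_{a,b}$, so $\mathcal{I}_{a,b}$ literally recurs in $\mathcal{G}$. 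Thus the only way a maximal bar gets discarded is if an \emph{identical} bar appears in another filtration, and in that case the \texttt{forbidden} bookkeeping ensures precisely one of these identical copies is retained — so $\mathcal{I}_{a,b}$ (as an element of $\mathcal{B}$, where identical intervals of the same dimension are not distinguished) is still in the output. Combined with the soundness direction, this shows the output is exactly the set of maximal bars.

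The main obstacle I expect is the careful handling of the \texttt{forbidden} mechanism and the treatment of equal (as opposed to strictly nested) bars: one must check that when several filtrations share a maximal subfiltration $\mathcal{F}_{a,b}$, exactly one copy of each bar supported on $[a,b]$ survives, and that the order in which the outer loop visits $i = 1, \ldots, n$ together with the one-time setting of \texttt{forbidden}$(\mathcal{F}_{a,b})$ achieves this without accidentally deleting all copies or keeping too many. Once that invariant is pinned down precisely, the rest is a routine appeal to uniqueness of zigzag barcodes (\cite{zigzag}) and to the interval-restriction property already used in the preceding proposition.
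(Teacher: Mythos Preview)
Your overall structure --- soundness (every output bar is maximal) and completeness (every maximal bar is output), with separate handling of the \texttt{forbidden} mechanism for duplicated subfiltrations --- matches the paper's proof, and your completeness direction is essentially the paper's argument. But the soundness direction contains a genuine logical gap.

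You write: ``because the (unique) zigzag barcode of a subfiltration is the restriction of the barcode of the whole filtration, $\mathcal{G}$ restricted to $[a,b]$ carries a $k$-dimensional bar equal to $\mathcal{I}_{a,b}$; in particular $\mathcal{F}_{a,b} = \mathcal{G}_{a,b}$ as zigzag filtrations.'' That last implication does not hold. Knowing that $\mathcal{F}_{a,b}$ and $\mathcal{G}_{a,b}$ each carry the bar $\mathcal{I}_{a,b}$ tells you nothing about whether they are equal \emph{as filtrations}: two entirely different sequences of index pairs can share a bar, or even share their entire barcode. The algorithm's inner test is literal equality of the subfiltrations (the actual sequences of pairs and inclusions), and that cannot be deduced from interval containment of bars.

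The root cause is your reading of the partial order as pure interval containment. The paper's intention (signalled by ``redundancy gives a partial order'') is that $\mathcal{I}_{a,b} \leq \mathcal{I}_{c,d}$ only when $\mathcal{I}_{a,b}$ comes from $\mathcal{F}$, $\mathcal{I}_{c,d}$ from $\mathcal{G}$, and $\mathcal{F}_{a,b} = \mathcal{G}_{a,b}$; the inclusion $\mathcal{I}_{a,b} \subseteq \mathcal{I}_{c,d}$ is then a \emph{consequence} (via the preceding proposition), not the definition. Under that reading, non-maximality of $\mathcal{I}_{a,b}$ already hands you a $\mathcal{G}$ with $\mathcal{F}_{a,b} = \mathcal{G}_{a,b}$, and the paper's argument that at least one such $\mathcal{G}_{a,b}$ has its \texttt{forbidden} flag unset (namely the one carrying the surviving maximal copy) goes through. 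Under your reading, soundness is unrecoverable: the algorithm will not discard a bar merely because some unrelated filtration happens to contain a longer bar.
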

\begin{proof}
Consider the set of maximal bars, denoted $\mathcal{B}_M$. We consider two cases. In the first, the maximal bar $B_{a,b}$ corresponds to a zigzag subfiltration in exactly one zigzag filtration $\mathcal{F}$. In such a case, there is no other filtration $\mathcal{G}$ where $\mathcal{F}_{a,b} = \mathcal{G}_{a,b}$, so when considering $B_{a,b}$ the redundant flag will not be \texttt{True} and hence $B_{a,b}$ will be added to $\mathcal{B}$. Now, assume that the maximal bar $\mathcal{B}_{a,b}$ corresponds to the same subfiltration in $\mathcal{F}_{i_1}$, \ldots, $\mathcal{F}_{i_k}$, where $i_1 \leq \ldots \leq i_k$. For each filtration $\mathcal{F}_{i_j}$, $i_j < i_k$, we claim that $B_{a,b}$ will be considered redundant. Clearly this is the case, because $B_{a,b}$  corresponds to a subfiltration $\mathcal{F}_{i_k,a,b}$, and as $\mathcal{F}_{i_k}$ has not been processed at the time that $\mathcal{F}_{i_j}$ is processed for $i_j < i_k$, it follows that when each is processed and $B_{a,b}$ is considered, the redundant flag will be set to \texttt{True} and $B_{a,b}$ will not be added to $\mathcal{B}$. However, when $\mathcal{F}_{i_k}$ is considered, all of the previous $\mathcal{F}_{i_j,a,b}$ will have been marked forbidden, and $B_{a,b}$ will be added to to $\mathcal{B}$. Thus, $B_{a,b}$ will be added exactly once. 

Hence, all maximal bars are included in $\mathcal{B}$. Note that every non-maximal bar $B_{a,b}$ will be marked redundant, because if $B_{a,b}$ is non-maximal then it corresponds to some filtration $\mathcal{F}$ and there must exist some maximal bar $B'_{c,d}$ corresponding to the filtration $\mathcal{G}$ where $\mathcal{G}_{a,b} = \mathcal{F}_{a,b}$. Since $B'_{c,d}$ is maximal, we have already established that at least one copy of it will not be marked redundant, so $\mathcal{F}_{a,b}$ can be compared against $\mathcal{G}_{a,b}$. Hence, $B_{a,b}$ will be marked redundant. 
\end{proof}

\section{Choosing Index Pairs}
\label{sec:multinode}

In Section \ref{sec:persist}, we explicated a technique for extracting a set of Conley Morse filtrations from a sequence of Conley-Morse graphs $G_1$, $G_2$, \ldots, $G_m$. Each Conley-Morse graph corresponds to a Morse decomposition $\md{1}$, $\md{2}$, \ldots, $\md{m}$. Crucially, our technique requires a fixed index pair in $N$, denoted $(P,E)$, for each Morse set $M \in \md{i}$. There are multiple approaches to choosing an index pair for $M$. A natural starting point is in Proposition \ref{prop:ip} and Proposition \ref{prop:pfip}, which together imply that $(\pf_N( \cl( M )), \pf_N( \mo( M ) ))$ is an index pair for $M$ in $N$. However, this approach can be problematic. Consider Figure \ref{fig:unthickened}. In the first and third images, we find an index pair for the isolated invariant sets $M_1$, $M_2$ by using this technique. The simplices in yellow and blue are the simplices in $N$, and the simplices in blue are in $P$. In these two cases, $E = \emptyset$. Intuitively, we expect that a $1$-dimensional homology class would persist from the left to the right. However, this is not the case. The middle image shows the index pair $(\cl(M_1) \cap \cl(M_2), \mo(M_1) \cap \mo(M_2))$ in blue, and $H_1( \cl(M_1) \cap \cl(M_2), \mo(M_1) \cap \mo(M_2)) = 0$. Thus, a one dimensional homology class does not persist from the first to third image. We devise a new method to compute index pairs which permits us to circumvent this issue. For a set of multivectors $\mathcal{A}$, we let $\langle \mathcal{A} \rangle := \cup_{A \in \mathcal{A}} A$. 

\begin{figure}[htbp]
\centering
\begin{tabular}{ccc}
  \includegraphics[height=32mm]{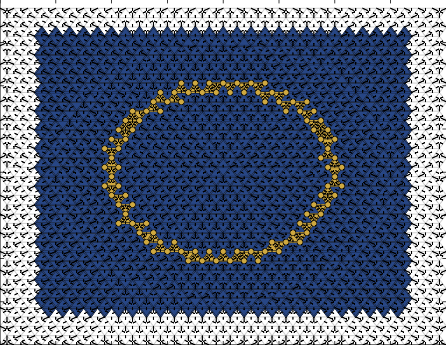}&
  \includegraphics[height=32mm]{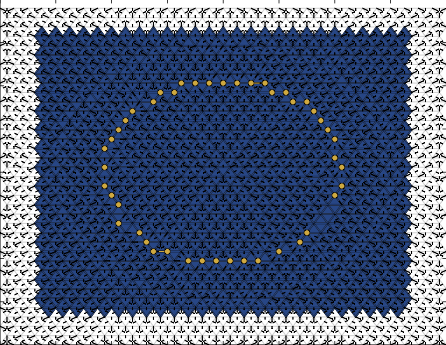}&
  \includegraphics[height=32mm]{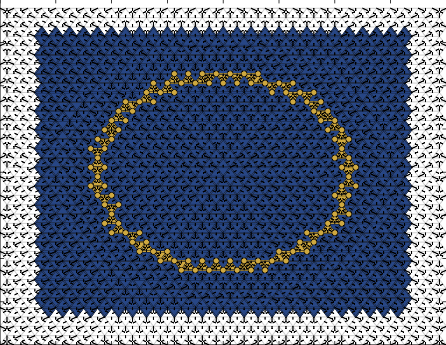}\\
  \multicolumn{3} {l} {
  \begin{tikzpicture}[outer sep = 0, inner sep = 0]
  \draw[fill=white,outer sep=0, inner sep=0] (0.0,-0.3) -- (13.21,-0.3) -- (13.21,0.1) -- (0.0,0.1) -- cycle;
  \node[align=left] at (1.3,-0.1) {Dimension: 0};
  \end{tikzpicture} }\\
  \multicolumn{3} {l} {
  \begin{tikzpicture}[outer sep = 0, inner sep = 0]
  \draw[fill=light-gray,outer sep=0, inner sep=0] (0.0,-0.3) -- (4.13,-0.3) -- (4.13,0.1) -- (0.0,0.1) -- cycle;
  \node[align=left] at (1.3,-0.1) {Dimension: 1};
  \draw[fill=light-gray,outer sep=0, inner sep=0] (9.08,-0.3) -- (13.21,-0.3) -- (13.21,0.1) -- (9.08,0.1) -- cycle;
  \node[align=left] at (10.38,-0.1) {Dimension: 1};
  \end{tikzpicture} }\\
  \multicolumn{3} {c} {
  \begin{tikzpicture}[outer sep = 0, inner sep = 0]
  \draw[fill=white,outer sep=0, inner sep=0] (0.0,-0.3) -- (4.95,-0.3) -- (4.95,0.1) -- (0.0,0.1) -- cycle;
  \node[align=left] at (1.3,-0.1) {Dimension: 0};
  \end{tikzpicture} }\\
  \multicolumn{3} {c} {
  \begin{tikzpicture}[outer sep = 0, inner sep = 0]
  \draw[line width = .5mm, dotted](0, 0) -- (0,-0.4);
  \end{tikzpicture} }\\
  \multicolumn{3} {c} {
  \begin{tikzpicture}[outer sep = 0, inner sep = 0]
  \draw[fill=white,outer sep=0, inner sep=0] (0.0,-0.3) -- (4.95,-0.3) -- (4.95,0.1) -- (0.0,0.1) -- cycle;
  \node[align=left] at (1.3,-0.1) {Dimension: 0};
  \end{tikzpicture} }
\end{tabular}
\caption{In all three images, the isolating set $N$ is given by the blue and the yellow simplices, while an isolated invariant set in $N$ is given by the yellow simplices. In addition, in the left and the right images, we can obtain an index pair in $N$ by taking the yellow simplices to be $P$ and letting $E := \emptyset$. If $(P_1,E_1)$ denotes the index pair on the left and $(P_2, E_2)$ denotes the index pair in $N$ on the right, then the index pair in the center is given by $(P_1 \cap P_2, E_1 \cap E_2)$. The yellow simplices in the middle are those simplices in $P_1 \cap P_2$. Intuitively, one would expect a $0$-dimensional bar and a $1$-dimensional bar to persist through all three images. However, when one takes the intersection of the two index pairs, one no longer has an annulus, so the $1$-dimensional bar does not persist through all three images. Instead, we obtain several short, $0$-dimensional bars. We show the barcode beneath the images, while excluding several of the short $0$-dimensional bars.  }
\label{fig:unthickened}
\end{figure} 

\begin{proposition}
Let $(P,E)$ denote an index pair in $N$ under $\mathcal{V}$, and let $\mathcal{A}$ denote a set of multivectors such that $\langle \mathcal{A} \rangle \subseteq N$, $\langle \mathcal{A} \rangle \cap E = \emptyset$ and $\mo( \langle \mathcal{A} \rangle ) \subseteq P$. The pair $(P \cup \langle \mathcal{A} \rangle, E)$ is an index pair in $N$ under $\mv{}$. 
\label{prop:expand}
\end{proposition}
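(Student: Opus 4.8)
The plan is to set $P' := P \cup \langle \mathcal{A} \rangle$, designate $S := \inv_{\mv{}}(P' \setminus E)$, and then verify the four conditions defining an index pair in $N$ for $S$. Two quick preliminaries come first. Since $\mo(\langle \mathcal{A} \rangle) \subseteq P$, I get $\cl(\langle \mathcal{A} \rangle) = \langle \mathcal{A} \rangle \cup \mo(\langle \mathcal{A} \rangle) \subseteq P'$, hence $\cl(P') = \cl(P) \cup \cl(\langle \mathcal{A} \rangle) \subseteq P'$ because $P$ is already closed; so $P'$ is closed, and since $E \subseteq P \subseteq P'$ and $\langle \mathcal{A} \rangle, P \subseteq N$, the pair $E \subseteq P' \subseteq N$ consists of closed subsets of $N$, as the definition requires. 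Also, because $\langle \mathcal{A} \rangle \cap E = \emptyset$, there is the clean decomposition $P' \setminus E = (P \setminus E) \cup \langle \mathcal{A} \rangle$, which I will use repeatedly.

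The crux is the single containment $F_{\mv{}}(\langle \mathcal{A} \rangle) \subseteq P'$. For $\sigma \in \langle \mathcal{A} \rangle$ I write $F_{\mv{}}(\sigma) = [\sigma]_{\mv{}} \cup \cl(\sigma)$; the term $\cl(\sigma)$ lies in $\cl(\langle \mathcal{A} \rangle) \subseteq P'$ by the first preliminary, and $[\sigma]_{\mv{}}$ is exactly the member of $\mathcal{A}$ containing $\sigma$ — this is the one place the argument uses that the members of $\mathcal{A}$ are multivectors of $\mv{}$ — so $[\sigma]_{\mv{}} \subseteq \langle \mathcal{A} \rangle \subseteq P'$. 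Hence $F_{\mv{}}(\sigma) \subseteq P'$ for every $\sigma \in \langle \mathcal{A} \rangle$.

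With this in hand the four conditions follow mechanically. Condition~1, $F_{\mv{}}(E) \cap N \subseteq E$, is inherited verbatim from the hypothesis that $(P,E)$ is an index pair in $N$, since $E$ is unchanged. Condition~2: $F_{\mv{}}(P') \cap N = (F_{\mv{}}(P) \cap N) \cup (F_{\mv{}}(\langle \mathcal{A} \rangle) \cap N) \subseteq P \cup P' = P'$, using condition~2 for $(P,E)$ on the first term and the crux containment on the second. Condition~3: $F_{\mv{}}(P' \setminus E) = F_{\mv{}}(P \setminus E) \cup F_{\mv{}}(\langle \mathcal{A} \rangle) \subseteq N \cup P' = N$, using condition~3 for $(P,E)$ and the crux containment together with $P' \subseteq N$. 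Condition~4, $S = \inv_{\mv{}}(P' \setminus E)$, holds by the choice of $S$.

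The one delicate point — and the step I expect to be the main obstacle — is that the definition of an index pair in $N$ for $S$ presupposes that $S$ itself is an isolated invariant set isolated by $N$, so this has to be checked for $S = \inv_{\mv{}}(P'\setminus E)$, mirroring the corresponding reasoning in \cite{DMS2020}. Invariance is automatic, since $\inv_{\mv{}}$ of any set is invariant (every simplex along an essential solution contained in a set is, after a shift, the basepoint of such a solution). For $\mv{}$-compatibility and for isolation by $N$ the workhorse is: if $\sigma \in S$ then $[\sigma]_{\mv{}} \subseteq P' \setminus E$ — for $\tau \in [\sigma]_{\mv{}}$ one has $\tau \in F_{\mv{}}(\sigma) \subseteq F_{\mv{}}(P' \setminus E) \subseteq N$ and hence $\tau \in F_{\mv{}}(P') \cap N \subseteq P'$ by conditions~3 and~2, while $\tau \in E$ is impossible since then $\sigma \in F_{\mv{}}(\tau) \cap N \subseteq E$ by condition~1, contradicting $\sigma \notin E$. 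The same inductive bookkeeping, run along a path $\rho \colon \mathbb{Z} \cap [a,b] \to N$ with $\rho(a), \rho(b) \in S$ — using that condition~1 makes $E$ forward-invariant inside $N$ and that $\rho(b) \notin E$ — shows $\im(\rho) \subseteq P' \setminus E$. One then splices $[\sigma]_{\mv{}}$, respectively $\rho$, into an essential solution already passing through $\sigma$ (resp. through $\rho(a)$ and $\rho(b)$) to certify the relevant simplices as members of $S$. The genuinely fiddly part is confirming that essentiality survives this splicing — every regular multivector visited is still entered and exited — but this is routine and parallel to \cite{DMS2020}.
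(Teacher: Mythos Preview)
Your proof is correct and follows essentially the same route as the paper: establish the key containment $F_{\mv{}}(\langle \mathcal{A}\rangle)\subseteq P\cup\langle\mathcal{A}\rangle$ using that $\mathcal{A}$ consists of multivectors of $\mv{}$ and that $\mo(\langle\mathcal{A}\rangle)\subseteq P$, then derive the four index-pair conditions by splitting $F_{\mv{}}$ over the union. The paper's proof does exactly this, in slightly different packaging (it writes $F_{\mv{}}(\langle\mathcal{A}\rangle)=\langle\mathcal{A}\rangle\cup\mo(\langle\mathcal{A}\rangle)$ directly).

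Your proposal is in fact more complete than the paper's own argument: you explicitly verify that $P'$ is closed and that $S=\inv_{\mv{}}(P'\setminus E)$ is an isolated invariant set isolated by $N$, both of which the paper's proof omits (presumably deferring to \cite{DMS2020}). These extra checks are sound and your sketch of them is accurate.
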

\begin{proof}
It is immediate that $\inv( ( P \cup \langle \mathcal{A} \rangle ) \setminus E ) = \inv( ( P \cup \langle \mathcal{A} \rangle ) \setminus E )$. In addition, note that $E$ has not changed, so $F_{\mv{}}(E) \cap N \subseteq E$ by hypothesis. Hence, it is sufficient to show that $F_{\mv{}}( P \cup \langle \mathcal{A} \rangle ) \cap N \subseteq P \cup \langle \mathcal{A} \rangle$ and $F_{\mv{}}(  P \cup \langle \mathcal{A} \rangle ) \setminus E \subseteq N$. 

First, we consider $F_{\mv{}}( \langle \mathcal{A} \rangle )$. The set $\langle \mathcal{A} \rangle$ is a union of multivectors, so by definition, $F_{\mv{}}( \langle \mathcal{A} \rangle ) = \langle A \rangle \cup \cl( \langle \mathcal{A} \rangle )$. By definition, $\cl( \langle \mathcal{A} \rangle ) = \langle \mathcal{A} \rangle \cup \mo( \mathcal{A})$. Thus, it follows that $F_{\mv{}}( \langle \mathcal{A} \rangle ) = \langle \mathcal{A} \rangle \cup \mo( \langle \mathcal{A} \rangle )$. By assumption, $\mo( \langle \mathcal{A} \rangle ) \subseteq P$, so it follows that $F_{\mv{}}( \langle \mathcal{A} \rangle ) \subseteq \langle A \rangle \cup P \subseteq N$.  

Now, we show that $F_{\mv{}}( P \cup \langle \mathcal{A} \rangle ) \cap N \subseteq P \cup \langle \mathcal{A} \rangle$. By the definition of $F$, it follows that $F_{\mv{}}( P \cup \langle \mathcal{A} \rangle ) = F_{\mv{}}( P ) \cup F_{\mv{}}( \langle \mathcal{A} \rangle )$. The pair $(P,E)$ is an index pair in $N$, so it follows that $F_{\mv{}}( P ) \cap N \subseteq P$. We have already shown that $F_{\mv{}}( \langle \mathcal{A} \rangle ) \subseteq P \cup \langle \mathcal{A} \rangle \subseteq N$, so it follows that $F_{\mv{}}( P \cup \langle \mathcal{A} \rangle ) \cap N \subseteq P \cup \langle \mathcal{A} \rangle$. 

Now, we move to showing that $F_{\mv{}}( (P \cup \langle \mathcal{A} \rangle ) \setminus E ) \subseteq N$. By assumption, $\langle \mathcal{A} \rangle \cap E = \emptyset$. Thus, $(P \cup \langle \mathcal{A} \rangle) \setminus E = (P \setminus E) \cup \langle \mathcal{A} \rangle$. Hence, $F_{\mv{}}( (P \cup \langle \mathcal{A} \rangle) \setminus E )  = F_{\mv{}}( P \setminus E) \cup F_{\mv{}}( \langle \mathcal{A} \rangle )$. The pair $(P,E)$ is an index pair in $N$, so $F_{\mv{}}( P \setminus E) \subseteq N$. We have already shown that $F_{\mv{}}( \langle \mathcal{A} \rangle ) \subseteq N$, so it follows that $F_{\mv{}}( (P \cup \langle \mathcal{A} \rangle ) \setminus E ) \subseteq N$. 

Hence, we conclude that $(P \cup \langle \mathcal{A} \rangle, E)$ is an index pair for $\inv( (P \cup \langle \mathcal{A} \rangle ) \setminus E )$ in $N$. 
\end{proof}

However, Proposition \ref{prop:expand} does not imply that $\inv( P \setminus E ) = \inv( (P \cup \langle \mathcal{A}) \setminus E)$. We strengthen Proposition \ref{prop:expand} to account for this. 

\begin{proposition}
Let $N$ denote a closed set, and let $\md{}$ denote the minimal Morse decomposition for $\inv(N)$. Furthermore, let $(P,E)$ denote an index pair in $N$ for an isolated invariant set $S$ in $N$. If $\mathcal{A}$ is a set of multivectors where $\langle \mathcal{A} \rangle \subseteq N$, $\langle \mathcal{A} \rangle \cap E = \emptyset$, $\mo( \langle \mathcal{A} \rangle ) \subseteq P$, and $\langle \mathcal{A} \rangle \cap M = \emptyset$ for $M \in \mathcal{M}$ where $M \not\subseteq S$, then $(P \cup \langle \mathcal{A} \rangle, E)$ is an index pair in $N$ for $S$.
\label{prop:thicken}
\end{proposition}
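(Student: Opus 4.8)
The plan is to build on Proposition~\ref{prop:expand}, which already establishes that $(P \cup \langle \mathcal{A} \rangle, E)$ is an index pair in $N$ for \emph{some} isolated invariant set, namely $\inv( (P \cup \langle \mathcal{A} \rangle) \setminus E)$. The only thing left to prove is that this invariant set equals $S$, i.e. that adjoining $\langle \mathcal{A} \rangle$ to $P$ does not enlarge the invariant part. Since $(P,E)$ is already an index pair in $N$ for $S$, we know $S = \inv(P \setminus E)$, and because $\langle \mathcal{A} \rangle \cap E = \emptyset$ we have $(P \cup \langle \mathcal{A} \rangle) \setminus E = (P \setminus E) \cup \langle \mathcal{A} \rangle \supseteq P \setminus E$, so monotonicity of $\inv$ gives $S \subseteq \inv( (P \cup \langle \mathcal{A} \rangle) \setminus E)$. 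The work is the reverse inclusion.

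For the reverse inclusion, first I would observe that $\inv( (P \cup \langle \mathcal{A} \rangle) \setminus E)$ is an isolated invariant set in $N$ (it is isolated by $N$ by Proposition~\ref{prop:expand} together with the proposition relating index pairs in $N$ to ordinary index pairs), and it is $\mv{}$-compatible. Hence it decomposes along the minimal Morse decomposition $\md{}$ of $\inv(N)$: every $\mv{}$-compatible isolated invariant set contained in $\inv(N)$ is a union of Morse sets of $\md{}$ — more precisely, if $\sigma$ lies in such an invariant set, the minimal Morse set through $\sigma$ is contained in it, by Proposition~\ref{prop:mdsub} (applied with $\mv{1} = \mv{2} = \mv{}$, so that the minimal Morse decomposition of $\inv(N)$ refines any $\mv{}$-compatible isolated invariant set). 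So it suffices to show that no Morse set $M \in \md{}$ with $M \not\subseteq S$ meets $\inv( (P \cup \langle \mathcal{A} \rangle) \setminus E)$.

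Suppose for contradiction that $M \in \md{}$, $M \not\subseteq S$, and $M \cap \inv( (P \cup \langle \mathcal{A} \rangle) \setminus E) \neq \emptyset$. As just argued, then $M \subseteq \inv( (P \cup \langle \mathcal{A} \rangle) \setminus E) \subseteq (P \cup \langle \mathcal{A} \rangle) \setminus E = (P \setminus E) \cup \langle \mathcal{A} \rangle$. By hypothesis $\langle \mathcal{A} \rangle \cap M = \emptyset$, so in fact $M \subseteq P \setminus E$. But then $M = \inv(M) \subseteq \inv(P \setminus E) = S$, contradicting $M \not\subseteq S$. (Here $M = \inv(M)$ because $M$ is an invariant set, and $\inv(M) \subseteq \inv(P \setminus E)$ by monotonicity.) Therefore $\inv( (P \cup \langle \mathcal{A} \rangle) \setminus E)$ contains no Morse set outside $S$; being a union of Morse sets of $\md{}$ all contained in $S$, it is contained in $S$, giving the reverse inclusion and hence $\inv( (P \cup \langle \mathcal{A} \rangle) \setminus E) = S$.

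The main obstacle, and the step to get right, is the claim that $\inv( (P \cup \langle \mathcal{A} \rangle) \setminus E)$ is a union of Morse sets of the \emph{minimal} decomposition $\md{}$ of $\inv(N)$ — that is, justifying the "decompose along $\md{}$" move cleanly. This needs: (i) that $\inv( (P \cup \langle \mathcal{A} \rangle) \setminus E) \subseteq \inv(N)$, which holds since $P \cup \langle \mathcal{A} \rangle \subseteq N$ and $N$ is closed, hence an isolating set, so any essential solution staying in $(P\cup\langle\mathcal{A}\rangle)\setminus E \subseteq N$ that witnesses invariance can be intersected with $\inv(N)$; and (ii) the application of Proposition~\ref{prop:mdsub} with both fields equal, to conclude that each minimal Morse set meeting the invariant set lies inside it. Once these are in place the Morse-set bookkeeping above finishes the proof. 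A secondary point to state carefully is that $\langle \mathcal{A} \rangle \cap M = \emptyset$ is used only for Morse sets $M \not\subseteq S$, exactly as in the hypothesis, so no stronger disjointness is needed.
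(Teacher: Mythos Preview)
Your argument has a real gap at the ``decompose along $\md{}$'' step. The claim that every $\mv{}$-compatible isolated invariant set contained in $\inv(N)$ is a union of minimal Morse sets of $\md{}$ is false: such a set generally contains connecting orbits as well. The simplest counterexample is $\inv(N)$ itself, which contains all connecting orbits between its minimal Morse sets in addition to the Morse sets. Proposition~\ref{prop:mdsub} only tells you that each minimal Morse set \emph{meeting} $\inv((P \cup \langle \mathcal{A}\rangle)\setminus E)$ lies inside it; it does not say every simplex of the invariant set lies in some minimal Morse set. So from ``every minimal Morse set contained in $\inv((P\cup\langle\mathcal{A}\rangle)\setminus E)$ is contained in $S$'' you cannot conclude $\inv((P\cup\langle\mathcal{A}\rangle)\setminus E)\subseteq S$.

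The repair is exactly what the paper does and is close to what you already have: take $\sigma\in\inv((P\cup\langle\mathcal{A}\rangle)\setminus E)\setminus S$, pick an essential solution $\rho$ through $\sigma$, and use that $\alpha(\rho)\subseteq M_1$ and $\omega(\rho)\subseteq M_2$ for some $M_1,M_2\in\md{}$. Your argument (via Proposition~\ref{prop:mdsub} and the hypothesis $\langle\mathcal{A}\rangle\cap M=\emptyset$ for $M\not\subseteq S$) correctly shows $M_1,M_2\subseteq P\setminus E$, hence $M_1,M_2\subseteq S$. The missing final move is then to invoke the \emph{isolation} of $S$ in $N$: $\rho$ furnishes a finite path in $N$ from a point of $M_1\subseteq S$ through $\sigma$ to a point of $M_2\subseteq S$, forcing $\sigma\in S$, a contradiction. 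This is precisely the paper's closing step; your Morse-set bookkeeping is otherwise sound and, with this correction, coincides with the paper's proof.
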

\begin{proof}
By Proposition \ref{prop:expand}, $(P \cup \langle \mathcal{A} \rangle, E)$ is an index pair for $\inv( (P \cup \langle A \rangle ) \setminus E)$, so it is sufficient to show that $\inv( ( P \cup \langle \mathcal{A} \rangle ) \setminus E ) = S$. Trivially, $S \subseteq \inv( (P \cup \langle \mathcal{A} \rangle ) \setminus E)$. Hence, we aim to show the reverse inclusion. To contradict, assume that $\sigma \in \inv( (P \cup \langle \mathcal{A} \rangle ) \setminus E ) \setminus S$. Then there exists some essential solution $\rho \; : \; \mathbb{Z} \to (P \cup \langle \mathcal{A} \rangle) \setminus E$ where $\rho( 0 ) = \sigma$. In addition, there must exist $M_1, M_2 \in \md{}$ such that $\alpha( \rho ) \subseteq M_1$ and $\omega( \rho ) \subseteq M_2$.  

We have that $\im( \rho ) \subseteq (P \cup \langle \mathcal{A} \rangle) \setminus E$, so $M_1 \cap (P \cup \langle \mathcal{A} \rangle ) \setminus E \neq \emptyset$, and similarly for $M_2$. We claim that $M_1, M_2 \subseteq (P \cup \langle \mathcal{A} \rangle ) \setminus E$. If there were a $\eta \in M_1$ but $\eta \not\in (P \cup \langle \mathcal{A} \rangle)\setminus E$, then since there is a $\tau \in M_1 \cap ( P \cup \langle \mathcal{A} )\setminus E$, Proposition \ref{prop:min} implies there exists a path from $\eta$ to $\tau$ and there exists a path from $\tau$ to $\eta$. But by the definition of an index pair in $N$, it follows that $\eta \in E$. Since there is a path from $\eta$ to $\tau$, but $\tau \in (P \cup \langle \mathcal{A} \rangle) \setminus E$, this contradicts the requirement that $F_{\mv{}}( E ) \cap N \subseteq E$. Hence, no such $\eta$ can exist, and $M_1,M_2 \subseteq (P \cup \langle \mathcal{A} \rangle) \setminus E$. 

By assumption, $\langle \mathcal{A} \rangle \cap M_1 = \emptyset$ if $M_1 \not\subseteq S$ (and similarly for $M_2$), so $M_1 \cap (\langle \mathcal{A} \rangle  \setminus P ) = \emptyset$ (and similarly for $M_2$). Thus, $M_1, M_2 \subseteq P \setminus E$. But $S = \inv( P \setminus E)$, so $M_1, M_2 \subseteq S$. But this implies there is a path from $S$ to $\sigma$ and a path from $\sigma$ to $S$, which implies that $S$ is not isolated by $N$, a contradiction. 
\end{proof}
Proposition \ref{prop:thicken} provides a natural avenue for finding index pairs that are suitable for computing the persistence of the Conley-Morse graph. If we are given an index pair $(P,E)$ in $N$ for a Morse set $M$, we can incrementally add multivectors to $P$ that satisfy the conditions of Proposition \ref{prop:thicken}, up to a specified distance away from the original $P$. We give an example of this in Figure \ref{fig:thickened}. 

\begin{figure}[htbp]
\centering
\begin{tabular}{ccc}
  \includegraphics[height=32mm]{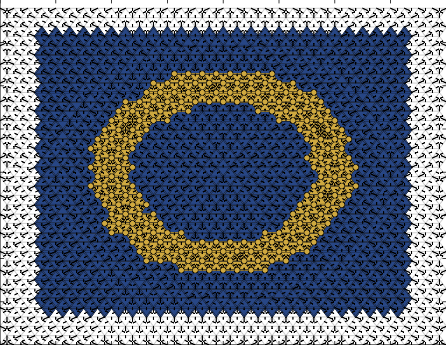}&
  \includegraphics[height=32mm]{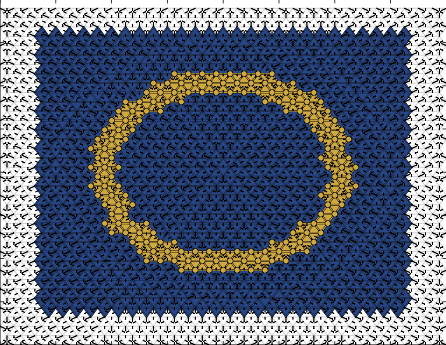}&
  \includegraphics[height=32mm]{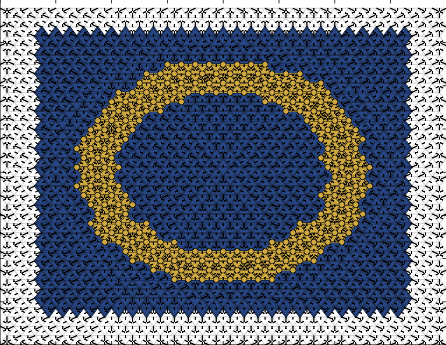}\\
  \multicolumn{3} {l} {
  \begin{tikzpicture}[outer sep = 0, inner sep = 0]
  \draw[fill=white,outer sep=0, inner sep=0] (0.0,-0.3) -- (13.21,-0.3) -- (13.21,0.1) -- (0.0,0.1) -- cycle;
  \node[align=left] at (1.3,-0.1) {Dimension: 0};
  \end{tikzpicture} }\\
  \multicolumn{3} {l} {
  \begin{tikzpicture}[outer sep = 0, inner sep = 0]
  \draw[fill=light-gray,outer sep=0, inner sep=0] (0.0,-0.3) -- (13.21,-0.3) -- (13.21,0.1) -- (0.0,0.1) -- cycle;
  \node[align=left] at (1.3,-0.1) {Dimension: 1};
  \end{tikzpicture} }
\end{tabular}
\caption{When one uses Proposition \ref{prop:thicken} to thicken the index pairs in Figure \ref{fig:unthickened}, a $1$-dimensional homology class persists through all three images, which matches the intuition.}
\label{fig:thickened}
\end{figure} 
\section{Conclusion}

We conclude by briefly discussing some directions for future work. While we have developed a method for computing a barcode of a sequence of Conley-Morse graphs, several questions remain unanswered. One area that is particularly notable is the relationship between classical and combinatorial isolated invariant sets. Throughout this paper, our implemented examples were obtained by discretizing differential equations into multivector fields. We were fortunate that the isolated invariant sets obtained by this method aligned with the isolated invariant sets from the classical dynamical systems. If one could find sufficient conditions that ensure some
kind of equivalence between the classical and combinatorial isolated invariant sets, it would open up the door to rigorous analysis of differential equations with persistent homology. For some preliminary work in this direction, utilizing triangulations with a transversality property \cite{BKM2007}, see \cite{MW19}.  

\bibliographystyle{abbrv}
\bibliography{refs}

\end{document}
